\newif\ifdraft
\tikzset{every picture/.append style={remember picture},
na/.style={baseline=-.5ex}}
\theoremstyle{plain}
\newtheorem{theorem}{Theorem}[section]
\newtheorem{lemma}[theorem]{Lemma}
\newtheorem{proposition}[theorem]{Proposition}
\newtheorem{corollary}[theorem]{Corollary}
\newtheorem{construction}[theorem]{Construction}
\newtheorem{definition}{Definition}[section]
\theoremstyle{remark}
\newtheorem{remark}{Remark}[section]
\newtheorem{example}{Example}[section]
\newcommand{\inside}{E}
\newcommand{\outside}{F}
\newcommand{\nada}[1]   {}
\newcommand{\closedintervalzeroone}{I}
\newcommand{\piout}{\pi_1(\outside,\ast)}
\newcommand{\knottable} {transverse~}
\newcommand{\fund} {\mathcal F}
\newcommand{\R} {\mathbb R}
\newcommand{\Sbb} {\mathbb S}
\newcommand{\SSS} {\mathcal S}
\newcommand{\sphere} {\Sbb^3}
\newcommand{\tempo} {\tau}
\definecolor{mygray}{rgb}{0.92,0.92,0.92}
\newcommand{\triplet}{{(\inside,\Sigma,\outside)}}
\newcommand{\tripletDK}{{(\inside^{D,K},\Sigma^{D,K},\outside^{D,K})}}
\newcommand{\half}{{\mathsf h}}
\newcommand{\id}{{\operatorname{id}}}
\newcommand{\HKfiveone}{{\operatorname{HK}5_1}}
\newcommand{\HKsixone}{{\operatorname{HK}6_1}}
\newcommand{\HKsixtwo}{{\operatorname{HK}6_2}}
\newcommand{\HKsixfour}{{\operatorname{HK}6_4}}
\newcommand{\HKsixeleven}{{\operatorname{HK}6_{11}}}
\newcommand{\Kninefortitwo}{{\operatorname{K}9_{42}}}
\newcommand{\Ktenseventyone}{{\operatorname{K}10_{71}}}
\newcommand{\draftMMM}[1]{\ifdraft{\color{blue}#1}\fi}
\newcommand{\draftYYY}[1]{\ifdraft{\color{orange}#1}\fi}
\numberwithin{equation}{section}
\numberwithin{figure}{section}
\title{On closed
oriented surfaces in the 3-sphere 
}
\author{Giovanni Bellettini}
\address{Dipartimento di Ingegneria dell'Informazione e Scienze Matematiche, Universit\`a di Siena, 53100 Siena, Italy,
and International Centre for Theoretical Physics ICTP,
Mathematics Section, 34151 Trieste, Italy
}
\email{bellettini@diism.unisi.it}
\author{Maurizio Paolini}
\address{Dipartimento di Matematica e Fisica, Universit\`a Cattolica del Sacro Cuore, 25121 Brescia, Italy}
\email{maurizio.paolini@unicatt.it}
\author{Yi-Sheng Wang}
\address{National Center for Theoretical Sciences, Mathematics Division, Taipei 106, Taiwan}
\email{yisheng@ncts.ntu.edu.tw}
\date{\today}
\begin{document}

\thanks{}

\begin{abstract}
\draftYYY{For the published version, check the folder ``shortened\_version''.}
In this paper we study 
embeddings of oriented connected closed surfaces in $\mathbb S^3$.
We define a complete invariant, the fundamental span, for such embeddings, generalizing the notion of the peripheral system of a knot group.
From the fundamental span, several computable invariants are derived 
and employed 
to study handlebody knots, bi-knotted surfaces, and chirality of knots.  
These invariants are capable to distinguish 
inequivalent handlebody knots and bi-knotted surfaces 
with homeomorphic complements. Particularly, we obtain an alternative proof of 
the inequivalence of Ishii et al.'s handlebody knots $5_{1}$ and $6_{4}$, and also construct 
an infinite family of pairs of inequivalent bi-knotted surfaces 
with homeomorphic complements. 
An interpretation of Fox's invariant in terms of the fundamental span 
is discussed and used to show    
$9_{42}$ and $10_{71}$ in the Rolfsen knot table are chiral; their chirality is known to be undetectable 
by the Jones and HOMFLY-PT polynomials. 
\end{abstract}

\maketitle


\section{Introduction}\label{sec:intro}
By the Gordon-Luecke theorem \cite{GorLue:89} and 
Waldhausen's theorem \cite{Wal:68},
the knot type of a knot 
$K\subset\sphere$ is determined, up to mirror image, 
by its knot group $G_K$ and peripheral system. 
More precisely, the peripheral system is 
the subgroup of $G_K$ generated by elements 
represented by a meridian $m$ 
and a preferred longitude $l$ through a base point.
If furthermore, we require $m$ and $l$ are positively oriented 
with respect to the orientation of 
$\sphere$,
then the knot group together with the conjugacy classes of 
the elements $[m]$, $[l]$
completely determines the knot type of $K\subset \sphere$.

Taking a tubular neighborhood of a knot, one can view a knot as 
an embedded solid torus in $\sphere$.
The solid torus inherits a natural orientation from $\sphere$, 
and induces an orientation on its boundary.   
In this way, a knot can be thought of as 
an embedded oriented surface in $\sphere$ \cite{Suz:75}. 
The assignment from the category of knots 
to the category of oriented connected closed surfaces in $\sphere$ 
is one-to-one, and therefore to
distinguish the knot types of two knots amounts to 
determining 
whether the associated embedded oriented surfaces are
ambient isotopic. The aim of this paper is to 
construct a complete invariant 
for oriented connected closed surfaces of arbitrary genus smoothly
embedded in $\sphere$, generalizing the knot group with
its peripheral system, and examine the topology of 
connected closed surfaces in $\sphere$
via computable invairants derived therefrom.

Any oriented connected closed surface $\Sigma$ in $\sphere$ 
gives rise to an oriented $3$-dimensional submanifold $\inside$ in $\sphere$ which is the closure of the connected component in $\sphere\setminus \inside$ satisfying $\partial \inside=\Sigma$ (i.e.\ with a compatible 
orientation). 
Conversely, given any connected $3$-dimensional submanifold with connected boundary
$\inside\subset\sphere$, 
$\partial \inside$ is an oriented connected closed surface in $\sphere$. 
In particular, 
the notion of oriented connected closed surfaces in $\sphere$ 
generalizes embeddings
of handlebodies in $\sphere$, namely handlebody knots.
%
%
An oriented connected closed surfaces $\Sigma$ in $\sphere$
can be viewed as a partition of $\sphere$, in which we
let $\inside$ be the ``inside'' and the ``outside'' is the closure $\outside$ 
of the complement of $\inside$. 
We denote by the triplet $(\inside,\Sigma,\outside)$  
such a partition.   
%
%
For instance, given a knot $K$, $\inside$ is a tubular neighborhood of $K$.

On the other hand, if no prescribed orientation of $\Sigma$ is given, 
there is no way to distinguish between
$\inside$ and $\outside$.
Embeddings of unoriented surfaces in $\sphere$ 
are studied, for example, in
\cite{Fox:48}, \cite{Hom:54}. 
In the genus-one case, 
it is equivalent to knots in $\sphere$.
More generally, an unoriented surface $\Sigma$ 
of genus $g$ in $\sphere$ with the closure of
one component of $\sphere\setminus\Sigma$ a handlebody is equivalent to a handlebody knot. 
There is an obvious forgetful functor from the category of 
oriented connected surfaces in $\sphere$ 
to the category of connected surfaces in $\sphere$ by ignoring
the distinction between the inside and outside (see Diagram \ref{diag:different_connected_embeddings}).

The denomination ``inside'' and ``outside'' is borrowed from the setting of
\cite{BeBePaPa:15}, where
the ambient space is $\R^3$, topologically equivalent to $\sphere$ 
with ``the point at infinity" $\infty$ removed, 
and $\inside$, a $3$-submanifold
with (a not necessarily connected boundary)
of $\R^3$, is called a {\it scene}.
In topology, studies from this point of view can be found in 
\cite{Tsu:70}, \cite{Tsu:75}.
This motivates our choice of the name ``scene'' 
for the triplet $(\inside, \Sigma, \outside)$
(Definition \ref{def:scene}).

Having an infinite point allows us to orient $\Sigma$ naturally 
and thus distinguish
between $\inside$ and $\outside$. 
Discussion from this point of view can be found in \cite{BeFrGh:12}.
Apart from this, $\inside$ and $\outside$ are treated equally
in the sense that they play an equally 
important role in the triplet $(\inside,\Sigma,\outside)$. 

In this paper we confine ourselves 
to the case of {\it connected scenes} $(\inside,\Sigma,\outside)$ 
({\it i.e.} $\Sigma$ is connected).
For each component in $(\inside,\Sigma,\outside)$,  
we consider its fundamental group, and the
three fundamental groups are interrelated 
via the homomorphisms induced by the inclusions $i_\inside: \Sigma \to \inside$, 
$i_\outside: \Sigma\to \outside$. 
The orientation information can be captured by 
taken into account
the intersection form
on the first homology group $H_1(\Sigma)$ of $\Sigma$.
In the case of knots, the fundamental group of $F$ 
corresponds to the knot group, whereas the kernel of
$\pi_1(\Sigma,\ast) \to \pi_1(\inside,\ast)$ and the image of
$\pi_1(\Sigma,\ast) \to \pi_1(\outside,\ast)$, $\ast\in\Sigma$,
along with the intersection form on $H_1(\Sigma)$ give
the peripheral system. 

This leads to our definition 
of a {\it group span with pairing} (Definition \ref{def:group_span_with_pairing}), which is 
an ordered triplet of groups $(G,\Upsilon,H)$ along with two connecting homomorphisms
$i_G:\Upsilon\rightarrow G$, $i_H:\Upsilon\rightarrow H$ and a pairing on 
the abelianization of $\Upsilon$. A connected scene $\SSS=(\inside,\Sigma,\outside)$ 
induces a natural group span with pairing, called the {\it fundamental span of} $\SSS$,
$(\pi_1(\inside,\ast),\pi_1(\Sigma,\ast),\pi_1(\outside,\ast),{i_\inside}_\ast,{i_\outside}_{\ast},d)$, where
$\ast\in\Sigma$ is a base point, $\pi_1(\cdot)$ is the fundamental group,
and $d$ is the intersection form on the first integral homology group $H_1(\Sigma,\mathbb{Z})$ of $\Sigma$   
(Definition \ref{def:fundamental_span}). 
One of the main results
in the paper is 
Theorem \ref{teo:complete_invariant}, where we show that 
the fundamental span is a complete invariant for connected
scenes, up to ambient isotopy. 
\nada{
In the special case of knots, the peripheral subgroup 
is the image of $i_{\outside_*}$, and one can distinguish the meridian
from the longitude by considering the kernel of $i_{\inside_*}$.
Devising a peripheral system from these pieces of information
is crucial in situations where the diagram of the knot is not directly
available: note carefully that this also happens
when $g=1$ and what is known is e.g. the apparent contour of a highly deformed
knotted solid torus.
\draftMMM{Changed the previos paragraph, the change can be shown with:
svn diff intro.tex -r 17081:17082}
}

We remark that Theorem \ref{teo:complete_invariant} does not imply 
the Gordon-Luecke theorem \cite{GorLue:89}.
Indeed the Gordon-Luecke theorem along with
Waldhausen's theorem imply a stronger version of Theorem \ref{teo:complete_invariant} in the case of knots:
given two embedded tori $\inside\subset\sphere$ and $\inside^\prime\subset\sphere$, 
suppose there are isomorphisms connecting $\phi_\outside:\pi_1(\outside,\ast)\rightarrow \pi_1(\outside^\prime,\ast^\prime)$
and $\phi_\Sigma:\pi_1(\Sigma,\ast)\rightarrow \pi_1(\Sigma^\prime,\ast^\prime)$ such that
${i_\outside^\prime}_\ast\circ\phi_\Sigma=\phi_\Sigma\circ {i_\outside}_\ast$, with $\phi_\Sigma$ preserving the intersection forms on $H_1(\Sigma,\mathbb{Z})$ and $H_1(\Sigma^\prime,\mathbb{Z})$.
Then the two solid tori in $\sphere$ are equivalent. 
%
On the other hand, 
in order to obtain a complete invariant for oriented surfaces of
genus larger than one,
it is necessary to take into account the information hidden 
in the induced homomorphism
${i_\inside}_\ast:\pi_1(\Sigma,\ast)\rightarrow \pi_1(\inside,\ast)$,
given there are infinite many handlebody knots with
homeomorphic complements.

Having a complete invariant does not close the classification problem
of knots. In fact, the problem of distinguishing two finite presentations 
of groups, up to isomorphism, is in general unsolvable 
in the sense that there is no
algorithm that always give an answer in a finite time \cite{Dehn:11}.
Proving isomorphism is generally done by finding a sequence of Tietze moves
that connects the two presentations, whereas proving that two
presentations describe non-isomorphic groups often requires 
computable invariants, such as the Alexander invariant.
Another aim of the paper is to construct 
strong, computable invariants out of the fundamental span, and
particularly those able to differentiate connected scenes with 
homeomorphic components.

The first invariant
applies primarily for handlebody knots,
and it inspired by Fox's proof \cite{Fox:52} 
of inequivalence of the
square knot and granny knot, and Riely's work \cite{Ril:71}
and Kitano and Suzuki's work \cite{KitSuz:12} on
homomorphisms from a knot group to a finite group,
 
To construct the invariant, we consider the subgroup 
${i_\outside}_\ast(\operatorname{Ker}({i_\inside}_\ast))$,
which is the image of 
the kernel of 
\begin{equation}\label{normal_closure_of_M}
{i_\inside}_\ast:\pi_1(\Sigma,\ast)\rightarrow \pi_1(\inside,\ast)
\end{equation}  
in $\pi_1(\outside,\ast)$ under 
\begin{equation*}
{i_\outside}_\ast:\pi_1(\Sigma,\ast)\rightarrow \pi_1(\outside,\ast).
\end{equation*}  
Consider also the surjective homomorphisms from $\pi_1(\outside,\ast)$ 
to a finite group $G$  
that are not surjective after being precomposed with ${i_F}_*$.
Such homomorphisms are called proper homomorphisms  
(Definition \ref{def:proper_homomorphism}).
Then the set of the images of the subgroup ${i_\outside}_\ast(\operatorname{Ker}({i_\inside}_\ast))$ in $G$
under proper homomorphisms up to automorphisms of $G$ is an invariant
of the connected scene $\SSS=(\inside,\Sigma,\outside)$.
The invariant takes value in the set of finite sets of subgroups of $G$,
up to automorphisms of $G$, and is called 
the $G$-image of meridians of $\SSS$ (Definition \ref{def:G_image}) 
since the kernel of \eqref{normal_closure_of_M} 
can be identified with the normal closure of meridians of $\inside$. 
It turns out that the $G$-image of meridians 
(Definition \ref{def:G_image}) can see
subtle difference between handlebody knots. 
To investigate this invariant, we generalize
Motto's and Lee-Lee's constructions 
\cite{Mot:90}, 
\cite{LeeLee:12} to generate 
a wide array of inequivalent handlebody knots with homeomorphic complements; 
computing the $G$-image of meridians for these examples shows 
that it is capable to distinguish many of such handlebody knots.

Inequivalent handlebody knots with homeomorphic complements are 
first discovered by Motto \cite{Mot:90} with a geometric argument; 
in the end of his paper he asks for a computable way to detect
such handlebody knots. The computable invariant devised here partially 
answers his challenge. On the other hand, due to the
finiteness of the group $G$, 
our invariant cannot distinguish an infinite family of such handlebody knots 
as Motto's approach did.

Contrary to knots, inequivalent handlebody knots with homeomorphic complements abound. $(5_1,6_4)$ and $(5_2,6_{13})$
in Ishii et al.'s handlebody knot table \cite{IshKisMorSuz:12}, 
are two such pairs.
The inequivalence of $5_1$ and $6_4$
(resp. $5_2$ and $6_{13}$) is proved 
by a geometric means in \cite{LeeLee:12}. Our computational method 
shows that the $A_5$-image of meridians can also see the difference 
between $5_1$ and $6_2$, However, 
it fails to differentiate $5_2$ from $6_{13}$. 
Via the $A_5$-image of meridians, we further identify
two $7$ crossings handlebody knots whose complements are homeomorphic to 
the complements of some handlebody knots in Ishii et al.'s handlebody table.
\begin{center}
\begin{figure}[b]
\includegraphics[scale=.1]{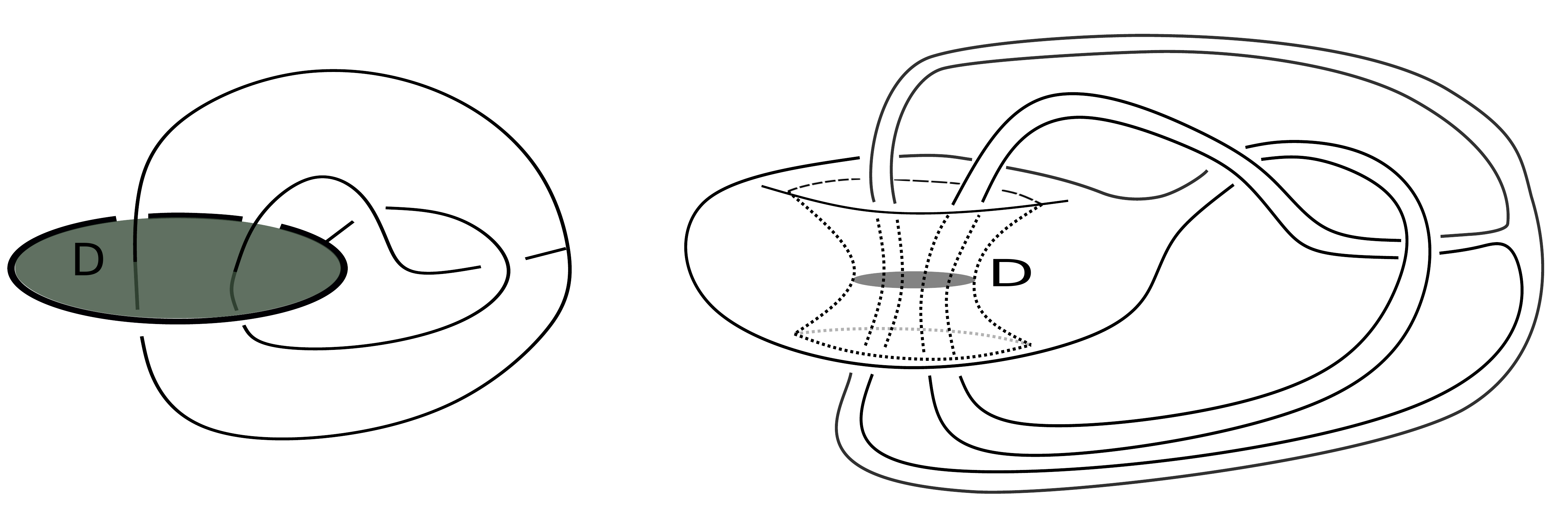}
\caption{Handcuff graph with a $2$-cell $D$ and the associated handlebody knot with a transverse disk.}
\label{Intro:knottable_disk_1}
\end{figure}
\end{center}
\begin{center} 
\begin{figure}[t] 
\includegraphics[scale=.12]{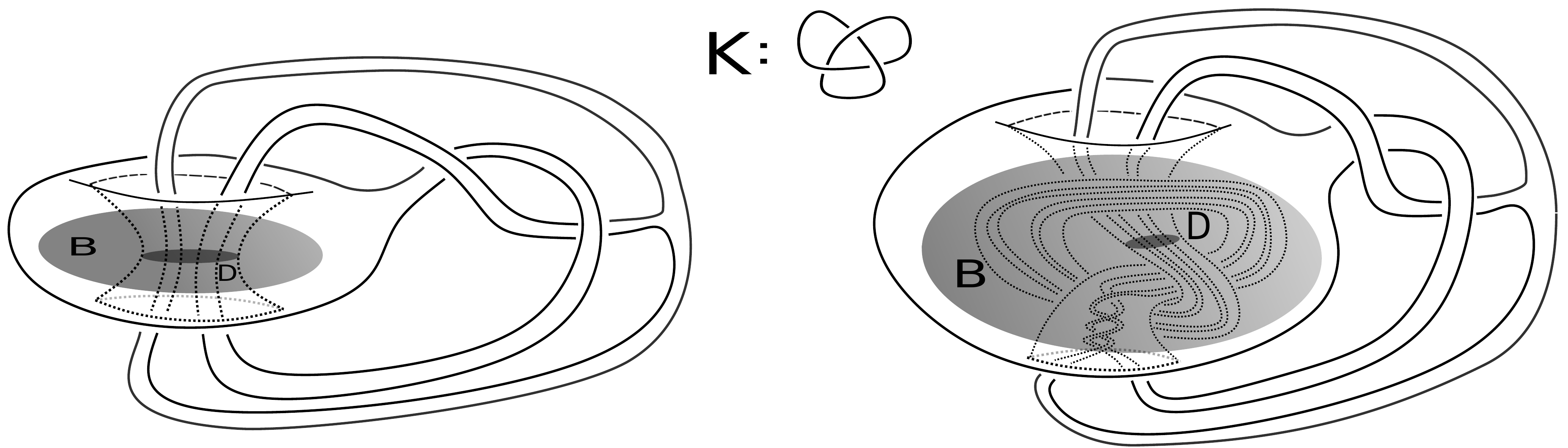}
\caption{Performing the satellite construction 
along a knot $K$ w.r.t.\ a transverse disk $D$.}
\label{Intro:knottable_disk_2} 
\end{figure}
\end{center}
  
To investigate the usefulness of the fundamental span 
in the case of bi-knotted scenes, that is
neither $\inside$ nor $\outside$ a $3$-handlebody, we introduce
the notion of transverse disks of a handlebody knot (Definitions \ref{def:potentially_explosive_disk} and 
\ref{def:truly_explosive_disk})
and the satellite construction along a knot 
with respect to (w.r.t.\ hereafter) 
a transverse disk. For instance, every handcuff graph in $\sphere$ with at
least one of its two circles unknotted in $\sphere$ admits
a natural transverse disk (Fig.\ \ref{Intro:knottable_disk_1}).
Performing the satellite construction 
amounts to replacing a $3$-ball neighborhood of
the transverse disk by a $3$-ball with knotted tubes inside (Fig. \ref{Intro:knottable_disk_2}).
The satellite construction gives 
an ample supply of bi-knotted scenes derived from handlebody knots.
In Ishii et al.'s handlebody knot table, there are
$10$ handcuff graph diagrams, each of which has 
two unknotted circles in $\sphere$, and hence admits
two natural transverse disks.
A question thus arises as to whether performing the satellite construction
w.r.t.\ the two transverse disks results 
in equivalent bi-knotted scenes. 
In some cases, the equivalence is obvious, while in the other cases, 
proving or disproving the equivalence between them could be challenging. 
We derive an invariant of irreducible handlebody knots with a transverse disk
from the fundamental span, and use it to show 
that $5_1$, $6_1$ and $6_{11}$ are the only
three among the ten handcuff diagrams, where performing the satellite construction w.r.t.\ associated disks yields 
inequivalent bi-knotted scenes.

Lastly, we examine the role of the intersection form in the fundamental span, 
a crucial ingredient in discerning the chirality of a connected scene. 
We demonstrate this by translating Fox's argument \cite{Fox:52} into an
invariant in terms of the fundamental span, 
and via the invariant, we determine the chirality of 
$9_{42}$ and $10_{71}$ in the Rolfsen knot table; their
chirality is known to be undetectable by any known knot polynomials.
In \cite{RamGovKau:94}, Chern-Simons invariants are employed to determine their
chirality; another approach via the determinant of a knot is discussed in \cite{FriMilPow:17}.

The paper is organized as follows: In Section \ref{sec:scenes} we introduce
the notion of scene and equivalence of scenes.
In Section \ref{sec:fundamental1}, we construct
the assignment from the category of connected scenes to 
the category of group spans with pairing
and show the assignment is one-to-one. 
Section \ref{sec:examples} discusses constructions
that generate connected scenes with homeomorphic components. We produce several
examples and study their properties. 
Invariants of group spans with pairing defined in terms of homomorphisms of $\pi_1(\outside,\ast)$ into 
a given finite group are introduced in Section \ref{sec:representations}; they are
used in subsection \ref{sec:using} to prove statements given 
in Section \ref{sec:examples}.


\section*{Acknowledgements}
We are grateful to the Mathematisches Forschungsinstitute
Oberwolfach where this research started. We are also grateful
to many people for useful discussions. In particular, we are
indebted to R. Frigerio, M. Mecchia, C. Petronio  and B. Zimmermann.
The present paper benefits from the support of the GNAMPA 
(Gruppo Nazionale per l'Analisi Matematica,
la Probabilit\`a e le loro Applicazioni) of INdAM
(Istituto Nazionale di Alta Matematica), and National Center for Theoretical Sciences.
\section{Scenes and scenes equivalence}\label{sec:scenes}
\begin{definition}[\textbf{Scene}]\label{def:scene}
A \emph{scene} in $\sphere$ is an ordered triplet 
$\SSS = (\inside, \Sigma, \outside)$ 
of oriented 
manifolds in $\sphere$
in the smooth 
category\footnote{We work in the smooth category 
to avoid pathological examples, such as the Alexander horned sphere \cite{Ale:24}.  
} such that $\inside$ and $\outside$ are $3$-manifolds with 
$\inside \cup F = \sphere$ and
$\inside \cap F = \Sigma$, where $\Sigma$ is a closed oriented surface 
with $\Sigma = \partial \inside = -\partial F$. 
\end{definition}

\begin{definition}[\textbf{Equivalence of scenes}]\label{def:equiv}
Two scenes $\SSS = (\inside, \Sigma, \outside)$ and $\SSS' = (\inside', \Sigma', \outside')$ (or two embeddings $E\hookrightarrow \sphere$ and $E^\prime
\hookrightarrow \sphere$) are equivalent, 
if there exists an ambient isotopy 
$\Phi_t:\sphere  \rightarrow \sphere$ 
such that $\Phi_0=\textrm{id}$ and $\Phi_1(\inside)=\inside'$

\end{definition}
The definition implies automatically
$\Phi_1(\Sigma) = \Sigma^\prime$ and 
$\Phi_1(\outside) = F^\prime$.

\begin{remark}\label{rem:Cerf}\rm
$\Phi_1$ is an orientation-preserving 
self-diffeomorphism of $\sphere$ 
sending $\inside$ to $\inside'$, and conversely, any orientation-preserving 
self-diffeomorphism of $\sphere$ sending $E$ to $E'$
is connected to 
the identity via an ambient isotopy 
\cite{Cer:68}.
\end{remark}

\begin{definition}[\textbf{Connected scene, genus}]\label{def:connected_scene}
A scene $\SSS = (\inside, \Sigma, \outside)$ is connected if $\Sigma$ is connected. 
The genus of a connected scene $\SSS=(\inside,\Sigma,\outside)$ is the genus of the surface $\Sigma$.
\end{definition}
\noindent

A connected scene has connected inside $\inside$ and outside $\outside$ 
as $\Sigma = \partial \inside = -\partial F$.

In this paper, we shall restrict ourselves to connected scenes. 
Relations with other embedded objects in $\sphere$, 
as discussed in the introduction, are summarized in the diagram below 
($\rightarrowtail$ stands for an injection of categories and $\twoheadrightarrow$ for a surjection): 

\begin{center}
\begin{equation}\label{diag:different_connected_embeddings} 
\begin{tikzpicture}
\node (CS) at (5,0) [rectangle, draw, align=center] {Connected Scenes\\ $(\inside,\Sigma,\outside)$};  
\node (HK) at (0,1) [rectangle, draw] {Handlebody Knots};
\node (K) at (0,-1) [rectangle, draw] {Knots};
\node (G) at (3,2) [ellipse, draw] {Spatial Graphs};
\node (S)at (9.5 ,0) [ellipse, draw] {Surfaces in $\sphere$};
\draw [>->] (1.7,.8) to (3.4,.3);
\draw [>->] (.9,-.9) to (3.4,-.3);
\draw [>->] (0,-.55) to (0,.55);
\draw [>->] (1.7,1) to [out=0, in=160] (8,.3);
\draw [>->] (.9,-1.1) to [out=0, in=-160](8,-.3);
\draw [->>] (6.7,0) to  node[above]{{\rm p}} (7.6,0);
\draw [->>] (1.3,2.1) to [out=180,in=90](0,1.5);
\end{tikzpicture}
\end{equation}
\end{center}
In view of the diagram, we call a connected scene \emph{a knot} if $\inside$ is a 
solid torus and call it \emph{a handlebody knot} if $\inside$ is a $3$-handlebody of 
genus $g \geq 1$.  
\draftMMM{Changed ``genus larger than $1$'' to ``genus $g \geq 1$''.}

\begin{definition}[\textbf{Trivial scene}]
A connected scene $(\inside, \Sigma, \outside)$
is trivial if both $\inside$ and $\outside$ are $3$-handlebodies. 
\end{definition}

Note that, by \cite[Satz $3.1$]{Wal:68i}, the Heegaard splitting of $\sphere$ 
of genus $g$ is unique, for every $g\geq 0$, namely the standard one. 
Hence, every two trivial scenes of the same genus are ambient isotopic. 
We use the symbol $H_{g}$ to
denote a handlebody of genus $g$ and $\Sigma_{g}$ a surface of genus $g$.
We drop $g$ when there is no risk of confusion.

\begin{definition}[\textbf{Bi-knotted scene}]
A bi-knotted scene is a connected scene $(\inside,\Sigma,\outside)$
with neither $\inside$ nor $\outside$ a $3$-handlebody.
\end{definition}

To understand the relation between connected scenes (or 
equivalently oriented connected closed surfaces in $\sphere$)   
and connected closed surfaces in $\sphere$ (mapping {\rm p} in
Diagram \eqref{diag:different_connected_embeddings} above)
we observe that, given a connected surface $\Sigma$ in $\sphere$, 
if the connected components $V_{1}$ and $V_{2}$ of $\sphere\setminus \Sigma$
are not homeomorphic, then the preimage of the surface in $\sphere$ under {\rm p}
contains precisely two elements---one regards $V_{1}$ as, say, the ``inside''
and $V_{2}$ as the ``outside''. 

If $V_{1}$ and $V_{2}$ are homeomorphic, the situation is subtler.
For the sake of convenience, we give the following definition.

\begin{definition}[\textbf{Symmetric scene}]\label{def:symmetric_scene}
A connected scene $(V, \Sigma, W)$ is symmetric
if $V$ and $W$ are homeomorphic.
\end{definition}

\begin{definition}[\textbf{Swappable/unswappable scene}]
A symmetric scene $(V,\Sigma , W)$ is swappable (resp. unswappable)
if it is equivalent (resp. inequivalent) to $(W,-\Sigma, V)$.  
\end{definition}

In the case of genus one, the only symmetric scene is the trivial scene
and it is swappable. In the case of genus two, by \cite[Theorem $1$]{Tsu:75}, 
every symmetric scene $(V,\Sigma_{2},W)$ must
have $V$ (resp. $W$) homeomorphic to the boundary connected 
sum of a solid torus $H_{1}$ and the complement of an open tubular neighborhood
of a knot $K_V$ (resp. $K_W$). Thus, by the Gordon-Luecke theorem
\cite{GorLue:89} and \cite[Corollary $3.4$]{Suz:75}, the knots $K_V$ and $K_W$ 
are equivalent, up to mirror image. 
Therefore, a symmetric scene $(V,\Sigma_{2},W)$
of genus $2$
is unswappable if and only if $K_V$ and $K_W$ 
are chiral knots and mirror images
to each other. 
Using a corollary of Waldhausen's theorem \cite[Theorem $3$]{Tsu:70},
we obtain the following theorem: 
\begin{theorem}
There exist unswappable symmetric connected scenes of genus $g$, for any $g>1$.
\end{theorem}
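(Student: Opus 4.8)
The plan is to bootstrap from the genus-two case, which is already settled in the text, by a connected-sum / stabilization construction. Fix a chiral knot $K$ (for instance the trefoil), and let $X_K$ denote the complement of a tubular neighborhood of $K$ in $\sphere$. By the discussion preceding the statement, the genus-two symmetric scene $(V_2,\Sigma_2,W_2)$ with $V_2 = H_1 \natural X_K$ and $W_2 = H_1 \natural X_{\overline K}$ (where $\overline K$ is the mirror of $K$, and $\natural$ is boundary connected sum) is unswappable: the two knots $K_V = K$ and $K_W = \overline K$ are chiral and mirror to each other, so no equivalence can swap the two sides. This gives the base case $g=2$.

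For arbitrary $g > 1$, I would produce an unswappable scene of genus $g$ by "adding trivial handles" to the genus-two example: set $V_g := V_2 \natural H_{g-2}$ and $W_g := W_2 \natural H_{g-2}$, with $\Sigma_g = \partial V_g$ a genus-$g$ surface in $\sphere$. Since $H_{g-2}$ is standardly embedded and the boundary connected sum of two submanifolds of $\sphere$ sitting in disjoint balls is again a submanifold of $\sphere$, this $(V_g,\Sigma_g,W_g)$ is a genuine connected scene of genus $g$. It is symmetric because $V_2 \cong W_2$ (both are a solid torus boundary-summed with a knot complement, and knot complements of mirror knots are homeomorphic) forces $V_g \cong W_g$. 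The content of the theorem is then the \emph{unswappability}: one must show $(V_g,\Sigma_g,W_g)$ is not equivalent to $(W_g,-\Sigma_g,V_g)$.

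The key step — and the main obstacle — is to pass from the homeomorphism-type obstruction (which is essentially trivial: $V_g$ and $W_g$ are homeomorphic) to an \emph{ambient-isotopy} obstruction, i.e. to rule out an orientation-preserving self-diffeomorphism of $\sphere$ carrying $V_g$ onto $W_g$. Here I would invoke the cited corollary of Waldhausen's theorem \cite[Theorem $3$]{Tsu:70}, which (as the text signals by "Using a corollary of Waldhausen's theorem") should say that such a self-diffeomorphism of $\sphere$ induces, after canonically decomposing each handlebody-summand off by an incompressible-surface / prime-decomposition argument, a self-homeomorphism of $\sphere$ matching the knotted summand $X_K$ with $X_{\overline K}$ and respecting the orientation of $\sphere$. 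By the Gordon–Luecke theorem together with Whitten's theorem (both recalled in the introduction), an orientation-preserving homeomorphism of $\sphere$ taking $X_K$ to $X_{\overline K}$ would exhibit $K$ as equivalent to $\overline K$, contradicting chirality. The delicate point is the uniqueness of the decomposition "solid-torus part $\natural$ knotted part $\natural$ trivial handles": one needs that the trivial $H_{g-2}$ handles and the $H_1$ factor cannot absorb or alter the knotting, which is exactly where Waldhausen's theorem (irreducibility and Haken-ness of the relevant pieces, uniqueness of the characteristic/JSJ-type decomposition) does the work. Since the cited results are available, the remaining argument is the bookkeeping needed to see that a swapping diffeomorphism would descend to one of the knotted summands with the wrong orientation behaviour.

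As a variant one could avoid stabilization entirely and instead appeal directly to \cite[Theorem $3$]{Tsu:70} as a classification statement producing, for every $g$, symmetric scenes whose two sides are determined by a $g$-tuple (or a single knot together with trivial data) of knots, and then choose the knotted data to be a chiral knot and its mirror; the logical structure of the unswappability argument is identical. Either way, no routine computation is required beyond tracking orientations through the Gordon–Luecke/Whitten step.
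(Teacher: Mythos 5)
Your overall route is the paper's: produce a genus-$2$ symmetric scene whose two sides carry a chiral knot $K$ and its mirror $\overline K$, stabilize to genus $g$ by summing with trivial handles, and rule out a swap by matching up the knotted summands and invoking Gordon--Luecke. (The paper notes immediately afterwards that its examples are non-prime, i.e.\ exactly such connected sums.) There is, however, one genuine gap: the base case is asserted, not constructed. The discussion preceding the theorem only says that \emph{if} a symmetric genus-$2$ scene exists then $V\cong H_1\natural X_{K_V}$, $W\cong H_1\natural X_{K_W}$ with $K_V,K_W$ mirror-related, and characterizes unswappability in terms of $K_V,K_W$; it does not exhibit a surface realizing $K_V=K$, $K_W=\overline K$. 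Moreover, writing ``$V_2=H_1\natural X_K$ and $W_2=H_1\natural X_{\overline K}$'' does not by itself single out an unswappable scene, because $X_K$ and $X_{\overline K}$ are homeomorphic as abstract manifolds: what matters is the embedding, namely which knot the embedded knotted summand of each side determines. Since the theorem is an existence statement, this construction is the actual content for $g=2$. It is easy to supply: take the scene connected sum (Definition of the sum operation in Section 2) of the knot scene $(N(K),\partial N(K),X_K)$ with the inverted knot scene $(X_{\overline K},-\partial N(\overline K),N(\overline K))$ --- equivalently, attach to the knotted solid torus $N(K)$, by a tube through its complement, a cube-with-hole knotted like $\overline K$. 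This gives $K_V=K$, $K_W=\overline K$, hence a symmetric unswappable genus-$2$ scene, and your $V_g=V_2\natural H_{g-2}$ is then the further sum with $g-2$ trivial genus-$1$ scenes; that description also justifies the claim $W_g\cong W_2\natural H_{g-2}$, which you cannot simply decree ($W_g$ is forced to be the closure of the complement of $V_g$).

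Two smaller points of attribution. The uniqueness statement you need in the swapping argument is not a JSJ/characteristic-submanifold decomposition but Suzuki's $\partial$-prime (disk) decomposition together with the Kurosh--Grushko uniqueness of free-product decompositions \cite[Cor.\ 3.4, 3.6]{Suz:75}, which is what the paper itself uses both in the genus-$2$ discussion and in the genus-$3$ example of Section \ref{subsec:Unswappable_scenes}; a swapping diffeomorphism of $\sphere$ must carry the unique non-free $\partial$-prime factor of $V_g$ to that of $W_g$. And Whitten's theorem is not needed: Gordon--Luecke plus the fact that the swap preserves the orientation of $\sphere$ already forces $K$ to be ambient isotopic to $\overline K$, contradicting chirality. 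With the base-case construction supplied, your argument closes correctly.
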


\begin{definition}[\textbf{Sum operation}]
Given two connected scenes $\SSS=(\inside,\Sigma,\outside)$ and $\SSS'=(\inside',\Sigma',\outside')$,
their connected sum $\SSS\#\SSS'$ is a connected scene given by
removing a point $p\in \Sigma$ and a point $p'\in\Sigma'$ and glue 
them together via an orientation-reversing diffeomorphism
\begin{align*}
(B_{p}\cap\Sigma, B_{p} ) \simeq (\Sbb^1, \Sbb^2)\times (0,1) 
&\rightarrow (B_{p'}\cap \Sigma', B_{p'})\simeq (\Sbb^1, \Sbb^2)\times (0,1)\\
(x,t)&\rightarrow (x,1-t),
\end{align*}
where $B_{p}$ (resp. $B_{p'}$) is a $3$-ball neighborhood of $p$ (resp. $p'$) 
in $\sphere$ with $p$ (resp. $p'$) removed. The first and last
orientation-preserving diffeomorphism identify $B_p$ and $B_p'$ with a unit 
$3$-ball, respectively.
The components of $\SSS\#\SSS'$ are denoted by $(\inside\# \inside',\Sigma\#\Sigma',\outside\# \outside')$.
\end{definition}
The sum operation is associative and commutative.

Given a connected scene $\SSS$, if it is equivalent to the connected sum of
connected scenes $\SSS_i$, $i=1,...n$, then we say $\SSS_1\#\SSS_2\#...\#\SSS_n$ 
is a decomposition of $\SSS$. 

\begin{definition}[\textbf{Prime scenes}]
A connected scene $\SSS$ is prime 
if its genus is larger than $0$ and 
admits no decomposition $\SSS=\SSS_1\#\SSS_2$ with both $\SSS_1$ and $\SSS_2$
non-trivial.
A decomposition $\SSS \simeq \SSS_1\#\SSS_2\#...\#\SSS_n$ is prime
if $\SSS_i$ is prime, $i=1,...,n$.
\end{definition}   
Note that our prime handlebody knots are called irreducible handlebody 
knots in \cite{IshKisMorSuz:12}. The notation chosen here is consistent 
with that in \cite{Suz:75}, \cite{Tsu:75} and in knot theory. In particular,
a scene $\SSS$ is prime if and only if $p(\SSS)$ regarded as an unoriented surface in
$\sphere$, is prime. On the other hand, the notions of prime $\theta$-curves and handcuff graphs 
in \cite{Mor:07}, \cite{Mor:09}\footnote{The classification of irreducible handlebody knots 
in \cite{IshKisMorSuz:12} is based on the classification of $\theta$-curves and handcuff graphs.} 
have different meanings.

The examples of unswappable scenes given above are non-prime. 
In fact, there is no unswappable prime scene with genus less than $3$.
In Section \ref{subsec:Unswappable_scenes}, we give a construction 
of unswappable prime scenes of genus $3$, as an application of the existence 
of inequivalent handlebody knots with homeomorphic complements,
and prove the following result:

\begin{theorem}\label{Unswappable_scenes_of_genus_3}
There exist infinitely many unswappable prime scenes of genus $3$.
\end{theorem}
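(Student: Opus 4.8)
The plan is to manufacture the required scenes as symmetric bi-knotted scenes of genus three built from pairs of inequivalent handlebody knots with homeomorphic complements, infinite families of which are produced in Section~\ref{sec:examples} (generalizing \cite{Mot:90} and \cite{LeeLee:12}). Fix such a family: \emph{irreducible} genus-two handlebody knots $\mathcal{H}_n$, pairwise inequivalent, each equipped with a partner $\mathcal{H}_n'$ so that the complements $C_n:=\overline{\sphere\setminus\mathcal{H}_n}$ and $C_n':=\overline{\sphere\setminus\mathcal{H}_n'}$ are homeomorphic while $\mathcal{H}_n\not\simeq\mathcal{H}_n'$; recall that inequivalence here means the absence of an \emph{orientation-preserving} self-diffeomorphism of $\sphere$ carrying one to the other (Remark~\ref{rem:Cerf}). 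From each pair I would build a genus-three scene $\SSS_n=(V_n,\Sigma_3,W_n)$ by attaching, inside a collar of $\partial\mathcal{H}_n$, one fixed ``knotted $1$-handle gadget'' (of the kind used in the blow-up construction, Figures~\ref{Intro:knottable_disk_1}--\ref{Intro:knottable_disk_2}), chosen so that the resulting inside $V_n$ is homeomorphic to $C_n$ with the gadget and the outside $W_n$ to $C_n'$ with the gadget, neither of them a handlebody. Since $C_n\cong C_n'$ and the gadget is attached in a collar, $V_n\cong W_n$, so each $\SSS_n$ is a \emph{symmetric} connected scene of genus three; because neither $C_n$ nor $C_n'$ is a handlebody, it is bi-knotted.

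Granting the construction, two properties must be verified. \emph{Primeness}: a nontrivial decomposition $\SSS_n\simeq\SSS'\#\SSS''$ yields an essential $2$-sphere $S\subset\sphere$ meeting $\Sigma_3$ in a single circle that compresses to a disk on each side; isotoping $S$ to meet the copies of $C_n\subset V_n$ and $C_n'\subset W_n$ efficiently and using incompressibility of $\partial C_n$ in $C_n$ together with the irreducibility (primeness) of $\mathcal{H}_n$ and $\mathcal{H}_n'$, one rules out all positions of this circle and concludes that no such $S$ exists. \emph{Unswappability}: a swap would be an orientation-preserving self-diffeomorphism $\Phi$ of $\sphere$ with $\Phi(V_n)=W_n$; the key point is that the genus-two incompressible surface $\partial C_n$ sitting inside $V_n$ --- and hence the handlebody knot $\mathcal{H}_n$ that it cuts off in $\sphere$ --- is canonically determined by $V_n$ up to isotopy, for instance as part of a JSJ-type decomposition, and likewise $\mathcal{H}_n'$ is canonically recovered from $W_n$; therefore $\Phi$ descends to an orientation-preserving equivalence of handlebody knots $\mathcal{H}_n\simeq\mathcal{H}_n'$, contradicting the choice of the pair. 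The same canonicity shows that $\SSS_m\simeq\SSS_n$ forces $\mathcal{H}_m\simeq\mathcal{H}_n$, so distinct indices give inequivalent scenes; as $\{\mathcal{H}_n\}$ is infinite, there are infinitely many unswappable prime scenes of genus three. For individual $n$ one can also double-check unswappability with the $G$-image of meridians of Section~\ref{sec:representations} (Definition~\ref{def:G_image}) --- the invariant used in this paper to reprove the inequivalence of Ishii et al.'s handlebody knots $5_1$ and $6_4$ --- by showing that its value on $\SSS_n$ differs from its value on the swapped scene.

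The main obstacle is twofold. First, the construction must be set up so that $\SSS_n$ is simultaneously symmetric, bi-knotted and \emph{prime}: the ``gadget'' has to raise the genus while remaining essential enough that no reducing sphere is created, and the reducing-sphere analysis above is exactly where this is checked. Second, and more delicate, is establishing the canonical character of the $C_n$-piece inside $V_n$, i.e.\ that a homeomorphism of the ambient $\sphere$ is forced down to an equivalence of the underlying genus-two handlebody knots; this is a canonical-decomposition (characteristic-submanifold) argument and is precisely where the data of ${i_\inside}_\ast$, beyond what the outside group alone records, enters --- in the spirit of the discussion following Theorem~\ref{teo:complete_invariant}.
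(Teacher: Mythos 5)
Your overall strategy is the same as the paper's: take a pair of inequivalent prime handlebody knots with homeomorphic complements, realize the two embeddings as the two sides of a single symmetric genus-$3$ scene, and observe that a swap would induce an equivalence of the underlying handlebody knots; your explicit check that distinct pairs give pairwise inequivalent scenes is a point the paper leaves implicit. The genuine gap is the construction itself. Writing ``a gadget chosen so that $V_n$ is homeomorphic to $C_n$ with the gadget and $W_n$ to $C_n'$ with the gadget'' is precisely the nontrivial step: attaching a knotted $1$-handle in a collar of $\partial\mathcal{H}_n$ does not by itself put a copy of the complement $C_n$ on one side of the new surface and a copy of $C_n'$ on the other --- a priori the inside of your surface is $\mathcal{H}_n$ plus a handle, not a complement at all. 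The paper's construction rests on a specific structural hypothesis: a loop $l\subset\Sigma$ meeting exactly one meridian of a complete system in one point and bounding a properly embedded disk in the complement of the induced solid torus. This lets one rewrite the complement of the partner handlebody knot as ``a solid torus with tunnels'' and then superimpose, on one and the same solid torus, the tree realizing $\mathcal{H}_n$ on the outside and the tunnels realizing $C_n'$ on the inside; the resulting $V$ and $W$ are then each homeomorphic to the boundary connected sum of a solid torus with the complement of $\operatorname{HK}5_1$. Without some such compatibility condition (which one must also verify for Motto's and Lee--Lee's families, as the paper notes), your claims $V_n\cong W_n$ and ``$C_n$ on one side, $C_n'$ on the other'' are unsubstantiated.

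The unswappability step is also argued differently, and your version is shaky as stated. You appeal to a JSJ-type canonicity of ``the incompressible genus-two surface $\partial C_n$ sitting inside $V_n$''; but in the actual construction $C_n'$ appears as a $\partial$-prime factor of $V$ rather than as the complementary region of a closed incompressible surface, and $\partial C_n$ is in any case compressible in $\sphere$. The paper's argument is more elementary: by Suzuki's uniqueness of the $\partial$-prime decomposition and of the meridian of the solid-torus factor \cite[Corollary $3.6$]{Suz:75}, any diffeomorphism $V\rightarrow W$ carries meridian to meridian; cutting along the corresponding meridian disks recovers $\operatorname{HK}5_1$ from one side and $+A_1$-twisted $\operatorname{HK}5_1$ from the other, so a swap would contradict Theorem \ref{Inequivalent_HK_with_homeo_complement}. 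Similarly, for primeness the paper does not run an innermost-circle analysis: it notes that a nontrivial decomposition would force both sides of the genus-$2$ factor to be $\partial$-irreducible, contradicting Fox's theorem \cite[p.462 (2)]{Fox:48} that a closed surface in $\sphere$ compresses on at least one side. You should replace your two sketches by these arguments, after first supplying the missing construction.
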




\section{Fundamental structure for connected scenes}\label{sec:fundamental1}
Given a connected scene $\SSS = (\inside, \Sigma, \outside)$
and a base point $\ast\in\Sigma$, the fundamental groups 
$\pi_1(\inside,\ast)$, $\pi_1(\Sigma,\ast)$, and $\pi_1(\outside,\ast)$
are related to each other via the homomorphisms ${i_\inside}_*$ and ${i_\outside}_*$
induced by the inclusions $i_\inside:\Sigma \to \inside$ and $i_\outside:\Sigma \to \outside$,
respectively. 
In general, ${i_\inside}_*$ and ${i_\outside}_*$ 
are neither injective nor surjective.
 
The following unknotting theorem is a corollary of \cite[Theorem $5.2$]{Hem:04}. 
\begin{proposition}
Let $\SSS=(\inside,\Sigma,\outside)$ be a connected scene 
and $\ast\in\Sigma$ a basepoint. Then 
$\SSS$ is trivial if and only if $\pi_{1}(\inside,\ast)$ and $\pi_1(\outside,\ast)$ are free groups.
\end{proposition}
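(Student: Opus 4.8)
\emph{Trivial $\Rightarrow$ free.} If $\SSS$ is trivial then $\inside$ and $\outside$ are $3$-handlebodies by definition; a handlebody of genus $g$ deformation retracts onto a wedge of $g$ circles, so $\pi_1(\inside,\ast)$ and $\pi_1(\outside,\ast)$ are free groups (of rank equal to the genus of $\Sigma$). This direction is routine; the content is in the converse.

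\emph{Free $\Rightarrow$ trivial.} Suppose $\pi_1(\inside,\ast)$ and $\pi_1(\outside,\ast)$ are free. The plan is to verify the hypotheses of \cite[Theorem~5.2]{Hem:04} for $\inside$ and for $\outside$, and then simply to invoke that theorem. Both $\inside$ and $\outside$ are compact, orientable $3$-manifolds with non-empty connected boundary (the boundary is $\Sigma$, which is connected since $\SSS$ is a connected scene), and their fundamental groups are free by hypothesis; so the one remaining property to check is \emph{irreducibility}, and this is exactly the point where it is used that the ambient manifold is $\sphere$. Let $S\subset\inside$ be an embedded $2$-sphere. By the Schoenflies theorem, $S$ bounds two closed $3$-balls $B_1,B_2$ in $\sphere$ with $B_1\cup B_2=\sphere$ and $B_1\cap B_2=S$. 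Since $\mathrm{int}\,\outside$ is connected and disjoint from $S$ (as $\inside\cap\outside=\Sigma$ and $S\subset\inside$), it lies in the interior of one of the two balls, say $\mathrm{int}\,B_1$; then $B_2$ is disjoint from $\mathrm{int}\,\outside$, hence $B_2\subset\inside$, and $S=\partial B_2$ bounds a ball in $\inside$. Therefore $\inside$ is irreducible, and the identical argument applies to $\outside$.

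Now \cite[Theorem~5.2]{Hem:04} applies to the compact, orientable, irreducible $3$-manifolds $\inside$ and $\outside$, which have non-empty boundary and free fundamental group, and tells us that each is a $3$-handlebody (of genus equal to the rank of the corresponding free group); hence $\SSS$ is trivial, completing the proof. Beyond quoting \cite[Theorem~5.2]{Hem:04}, the whole argument reduces to the irreducibility step, which is also the only place where it matters that $\inside$ and $\outside$ sit inside $\sphere$ — over a general ambient $3$-manifold the statement fails, so this step cannot be skipped; but it is entirely standard, so I do not expect any real obstacle.
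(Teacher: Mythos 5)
Your proof is correct and takes essentially the same route as the paper: both arguments reduce to Hempel's Theorem 5.2 after establishing that $\inside$ and $\outside$ are irreducible because they are complementary regions of the connected surface $\Sigma$ in $\sphere$. The only difference is that you spell out the Schoenflies/Alexander argument for irreducibility (which the paper merely asserts), while the paper additionally invokes the sphere theorem to exclude the $S^2$-bundle-over-$S^1$ alternative in Hempel's statement.
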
 
\begin{proof}
\cite[Theorem $5.2$]{Hem:04} asserts that a prime $3$-manifold with the fundamental group 
a free group is either a $S^{2}$ bundle over $S^{1}$ or a $3$-ball with some $1$-handles 
attached to its boundary. 

Now, since $\Sigma$ is connected, its complements $\inside$ and $\outside$ must be 
irreducible $3$-manifolds, and hence, $\pi_{2}(\inside,\ast)$ and $\pi_{2}(\outside,\ast)$ are trivial 
by the sphere theorem. 
That implies  $\inside$ (resp. $\outside$) cannot be a $S^{2}$-bundle over $S^{1}$. 
On the other hand, any $3$-manifold in $S^{3}$ is orientable, 
so $\inside$ (resp. $\outside$) must be a $3$-handlebody. 
\end{proof}
The topological type of a connected scene $\SSS=(\inside,\Sigma,\outside)$ 
is not determined solely by the fundamental groups of $\inside$ and $\outside$; 
there are inequivalent connected scenes with homeomorphic outsides 
and insides \cite{Mot:90}, \cite{LeeLee:12}. To distinguish such connected scenes,
additional structures need to be taken into account.
 
To this aim, we introduce the notion of a fundamental span,
which is an analog of a knot group with the peripheral system; 
the following definitions describes
the algebraic universe where fundamental spans live.

\begin{definition}[\textbf{Group span with pairing}]\label{def:group_span_with_pairing}
A group span with pairing is an ordered triplet of groups $(G,\Upsilon,H)$ along with two connecting 
homomorphisms
$i_{G}:\Upsilon\rightarrow G$ and $i_{H}:\Upsilon\rightarrow H$ and a non-degenerate pairing
$d:\Upsilon/[\Upsilon,\Upsilon]\times \Upsilon/[\Upsilon,\Upsilon]\rightarrow \mathbb{Z}$, where
$[A,A]$ denotes the commutator subgroup of the group $A$.
\end{definition}

\begin{definition}[\textbf{Equivalence of group spans with pairing}]
\label{def:equivalence_of_group_span_with_pairing}
Two group spans with pairing
\[(G,\Upsilon,H,i_{G},i_{H},d),\qquad
(G^{\prime},\Upsilon^{\prime},H^{\prime},i_{G^{\prime}},i_{H^{\prime}},d^{\prime})\]
are equivalent if there are isomorphisms
$G\rightarrow G^{\prime}$, $\Upsilon\rightarrow \Upsilon^{\prime}$, and $H\rightarrow H^{\prime}$
such that the diagram  
\begin{center} 
\begin{tikzpicture}
\node[black](Lu) at (0,2) {$G$};
\node(Lm) at (0,1) {$\Upsilon$};
\node(Ll) at (0,0) {$H$}; 
\node(Ru) at (2,2) {$G^{\prime}$};
\node(Rm) at (2,1) {$\Upsilon^{\prime}$};
\node(Rl) at (2,0) {$H^{\prime}$};

\path[->, black, font=\scriptsize,>=angle 90] 

(Lu) edge (Ru)  
(Lm) edge (Lu)
(Lm) edge (Ll)
(Lm) edge (Rm)
(Ll) edge (Rl) 
(Rm) edge (Ru)
(Rm) edge (Rl);
\end{tikzpicture}
\end{center}
commutes and the isomorphism $\Upsilon\rightarrow
\Upsilon^{\prime}$ preserves the pairings $d$ and $d^{\prime}$. 
\end{definition}

\nada{ 
}

Given a connected scene $\SSS=(\inside,\Sigma,\outside)$ 
and a base point $\ast\in \Sigma$, the fundamental group functor 
$\pi_{1}(-)$ 
gives a group span with pairing
\begin{equation}\label{pi_1_functor}
(\pi_1(\inside,\ast),\pi_1(\Sigma,\ast),\pi_1(\outside,\ast),{i_\inside}_*,{i_\outside}_*,d),
\end{equation} 
where $d$ is the intersection form
on $H_{1}(\Sigma,\mathbb{Z})$ 
given by the orientation of $\Sigma$.
Notice that using different base points results in 
equivalent group spans with pairing; 
thus the equivalence class of \eqref{pi_1_functor} 
is independent of the choice of a base point. 

\begin{definition}[\textbf{Fundamental span}]\label{def:fundamental_span}
Given a connected scene $\SSS=(\inside,\Sigma,\outside)$, 
the equivalence class of $(\pi_1(\inside,\ast),\pi_1(\Sigma,\ast),\pi_1(\outside,\ast),{i_\inside}_*,{i_\outside}_*,d)$
is called the fundamental span of $\SSS$, and 
is denoted by $\fund(\SSS)$.
\end{definition}

Any (base point preserving) equivalence between (based) connected scenes 
induces equivalent group spans with pairing; thus, $\fund(\cdot)$ induces a mapping 
from the equivalence classes of connected scenes 
to the equivalence classes of group spans with pairing:
\[\fund:\{\text{connected scenes}\}/\simeq\hspace*{.5em}\longmapsto\hspace*{.5em}\{\text{group spans with pairing}\}/\simeq,\]
where $\simeq$ is the equivalence between connected scenes or group spans with pairing.


\begin{theorem}[\textbf{Complete invariant}]\label{teo:complete_invariant}
The mapping $\fund$ is injective. In other words, the fundamental span
is a complete invariant for connected scenes. 
\end{theorem}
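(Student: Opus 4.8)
The plan is to show that the fundamental span determines a connected scene up to ambient isotopy, i.e. that $\fund(\SSS) = \fund(\SSS')$ implies $\SSS \simeq \SSS'$. The backbone of the argument is Waldhausen's theorem on Haken $3$-manifolds \cite{Wal:68}, applied not to $\inside$ and $\outside$ separately, but to the Heegaard-type splitting $\sphere = \inside \cup_\Sigma \outside$. First I would record that, since $\Sigma$ is connected and separating, both $\inside$ and $\outside$ are irreducible (by the sphere theorem argument already used in the proof of the unknotting Proposition), hence Haken, and $\Sigma$ is incompressible in neither in general but is $\pi_1$-injective on each side only after we pass to the data we are given — so in fact the right objects to compare are the manifolds-with-boundary $(\inside,\Sigma)$ and $(\outside,\Sigma)$ together with their gluing. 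The key point is that an equivalence of group spans with pairing gives isomorphisms $\phi_\inside\colon\piin \to \pi_1(\inside')$, $\phi_\Sigma\colon\pi_1(\Sigma,\ast)\to\pi_1(\Sigma')$, $\phi_\outside\colon\piout\to\pi_1(\outside')$ commuting with ${i_\inside}_*,{i_\outside}_*$ and with $\phi_\Sigma$ preserving the intersection form.

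The key steps, in order: (1) Use that $\phi_\Sigma$ preserves the intersection form on $H_1(\Sigma;\Z)$ to conclude, via the Dehn–Nielsen–Baer theorem (in the closed oriented surface case), that $\phi_\Sigma$ is induced by an \emph{orientation-preserving} diffeomorphism $h\colon\Sigma\to\Sigma'$ — this is where the pairing is essential, since an orientation-reversing diffeomorphism negates the form. (2) Apply Waldhausen's theorem to the Haken manifold $\inside$: the isomorphism $\phi_\inside$ together with $\phi_\Sigma$, compatible via ${i_\inside}_*$, is realized by a diffeomorphism $H_\inside\colon\inside\to\inside'$ extending $h$ (here one must check the boundary pattern hypotheses of Waldhausen are met — $\partial\inside=\Sigma$ is a single incompressible-enough piece, and irreducibility handles the reducible cases). (3) Likewise apply Waldhausen to $\outside$ with $\phi_\outside,\phi_\Sigma$ to get $H_\outside\colon\outside\to\outside'$ extending the \emph{same} boundary map $h$. (4) Since $H_\inside$ and $H_\outside$ agree on $\Sigma$, they glue to a self-diffeomorphism $\sphere\to\sphere$ carrying $\SSS$ to $\SSS'$, which is orientation-preserving because $h$ is and the pieces carry their induced orientations; then Remark \ref{rem:Cerf} (Cerf's theorem) upgrades this to an ambient isotopy.

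The main obstacle I anticipate is step (2)–(3): verifying the precise hypotheses of Waldhausen's theorem, particularly that the surface $\Sigma$, as boundary of $\inside$ (resp. $\outside$), satisfies the incompressibility/"useful boundary pattern" conditions Waldhausen requires, and handling the degenerate cases where $\inside$ or $\outside$ is a handlebody (in which $\pi_1$ is free and $\Sigma$ is compressible). In the handlebody case one cannot quote Waldhausen directly and must instead argue that an isomorphism of free groups compatible with the given data is realized by a homeomorphism of the handlebody inducing the prescribed boundary map — this follows from the fact that a handlebody is determined by its boundary together with the kernel of ${i_\inside}_*$ (the system of meridian disks), so one shows $h$ carries the meridian system of $\inside$ to one for $\inside'$ because $\phi_\Sigma$ matches $\operatorname{Ker}({i_\inside}_*)$ with $\operatorname{Ker}({i_{\inside'}}_*)$, and then extends over the handlebody. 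A secondary subtlety is base-point bookkeeping and making sure the three realizing diffeomorphisms can be chosen to agree on a neighborhood of $\ast$, but this is routine once the $\Sigma$-restrictions are arranged to coincide.
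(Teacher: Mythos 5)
Your overall architecture (Dehn--Nielsen--Baer on $\Sigma$ using the intersection form to get an orientation-preserving $h$, Waldhausen on the three-dimensional pieces, glue and invoke Cerf) matches the paper's, and you correctly flag that Waldhausen's theorem needs $\partial$-irreducibility. The genuine gap is that your fallback for the compressible case covers only handlebodies, and the implicit dichotomy ``either $\Sigma$ is incompressible in $\inside$ or $\inside$ is a handlebody'' is false. A general $\inside\subset\sphere$ with connected boundary can be, for example, the boundary connected sum of two nontrivial knot exteriors, or of a knot exterior and a solid torus: its boundary is compressible, its fundamental group is a nontrivial free product that is not free, and neither Waldhausen's theorem nor your meridian-disk argument applies to it. This intermediate case is exactly where the substantive work of the paper lies.

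The paper resolves it with Suzuki's $\partial$-prime decomposition theorem: $(\inside,\Sigma)$ splits as a boundary connected sum of $\partial$-prime pieces, inducing a free product decomposition of $\pi_1(\inside,\ast)$ into indecomposable factors; uniqueness of such decompositions together with the Kurosh subgroup theorem shows that $\phi_\inside$ carries each factor $\pi_1(\inside_i,\ast)$ into the corresponding factor of $\pi_1(\inside',\ast')$, and Dehn's lemma realizes the curves $f_\Sigma(\partial D_i)$ by disks in $\inside'$, producing a matching $\partial$-prime decomposition of $(\inside',\Sigma')$. Only then is Waldhausen's theorem applied, factor by factor (with solid-torus factors handled by hand), and the resulting diffeomorphisms are assembled by boundary connected sum; the same is done for $\outside$. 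Without some version of this decompose-and-match step, your steps (2)--(3) do not go through for general connected scenes, so the proposal as written is incomplete.
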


\begin{proof}
Firstly, note that 
connected scenes of different genus cannot have the same fundamental spans.

Secondly, observe that, in the case of genus $0$, $\Sigma$ is a $2$-sphere, and 
the $3$-dimensional Sch\"onflies theorem \cite{Maz:61} implies
all connected scenes are ambient isotopic.  
Thus, the theorem holds trivially in this case.
 

Now, suppose there exists an equivalence between the fundamental spans of 
two connected scenes $\SSS=(\inside,\Sigma,\outside)$ and $\SSS^{\prime}=(\inside^{\prime},\Sigma^{\prime},\outside^{\prime})$
of genus $g>0$;
that is there exist isomorphisms $\phi_{\inside}$, $\phi_{\Sigma}$ and $\phi_{\outside}$
such that the diagram
\begin{center}
\begin{equation}\label{EqbetweenCS}
\begin{tikzpicture}[baseline = (current bounding box.center)]
\node(Lu) at (0,3) {$\pi_{1}(\inside,\ast)$};
\node(Lm) at (0,1.5) {$\pi_{1}(\Sigma,\ast)$};
\node(Ll) at (0,0) {$\pi_{1}(\outside,\ast)$};
\node(Ru) at (3,3) {$\pi_{1}(\inside^{\prime},\ast^{\prime})$};
\node(Rm) at (3,1.5) {$\pi_{1}(\Sigma^{\prime},\ast^{\prime})$};
\node(Rl) at (3,0) {$\pi_{1}(\outside^{\prime},\ast^{\prime})$};

\path[->, font=\scriptsize,>=angle 90]

(Lu) edge node [above]{$\phi_{\inside}$}node[below]{$\sim$}(Ru)
(Lm) edge (Lu)
(Lm) edge (Ll)
(Lm) edge node [above]{$\phi_{\Sigma}$}node[below]{$\sim$}(Rm)
(Ll) edge node [above]{$\phi_{\outside}$}node[below]{$\sim$}(Rl)
(Rm) edge (Ru)
(Rm) edge (Rl);
\end{tikzpicture}
\end{equation}
\end{center}
commutes and $\phi_{\Sigma}$ preserves the intersection forms 
on $H_{1}(\Sigma,\mathbb{Z})$ and $H_{1}(\Sigma^{\prime},\mathbb{Z})$.   

If we can show that $\SSS$ and $\SSS'$ are equivalent, 
the injectivity of $\fund$ follows.
In fact, we shall construct an equivalence of connected scenes 
that realizes the above equivalence of fundamental spans 
$\fund(\SSS)$ and $\fund(\SSS')$. We divide the proof into four steps.

\smallskip
{\it Step 1}: Realizing $\phi_\Sigma$ by an orientation preserving homeomorphism
 
The isomorphism $\phi_{\Sigma}: \pi_{1}(\Sigma,\ast)\xrightarrow{\sim}\pi_{1}(\Sigma^{\prime},\ast^{\prime})$ can be realized by an orientation-preserving diffeomorphism $f_{\Sigma}:(\Sigma,\ast)\xrightarrow{\sim} (\Sigma^{\prime},\ast^{\prime})$. To see this, we note first that $\phi_\Sigma$
can be realized by a homotopy equivalence since surfaces are Eilenberg-Maclane spaces of type $K(G,1)$
\cite[Section $8.1$]{FarMar:11}. 
Secondly, we deform the homotopy equivalence
into a homeomorphism; this can be achieved by 
employing the topological proof of the Dehn-Nielsen-Baer
theorem \cite[Section $8.3.1$]{FarMar:11}. 
The homeomorphism can be further deformed into
a diffeomorphism $f_\Sigma$ by \cite[Theorem $3.10.9$]{SteThu:97}.
Now, identify $\Sigma^\prime$
with $\Sigma$ via an orientation preserving diffeomorphism $g$, and 
observe that $(g\circ f_\Sigma)_\ast=g_\ast\circ \phi_\Sigma$ preserves 
the intersection form on $H_1(\Sigma,\mathbb{Z})$.
By the Dehn-Nielsen-Baer theorem, the self-diffeomorphism 
$g\circ f_\Sigma$ is an orientation preserving 
map, and hence, $f_\Sigma$ preserves the orientations of $\Sigma$ and $\Sigma^\prime$. 



\smallskip

The assertion of the theorem follows immediately if there exist diffeomorphisms 
$f_{\inside}:\inside\xrightarrow{\sim} \inside^{\prime}$ 
and $f_{\outside}:\outside\xrightarrow{\sim} \outside^{\prime}$ extending $f_{\Sigma}$. 

\smallskip
{\it Step 2}: Free product decomposition

Recall that Suzuki's $\partial$-prime decomposition theorem
\cite[Theorem $3.4$]{Suz:75} states that
every $3$-manifold that can be embedded in $\sphere$ 
has a $\partial$-prime decomposition. In particular, the pair 
$(\inside,\Sigma)$ admits a $\partial$-prime decomposition 
\begin{equation}\label{BP-decomposition of inside}
(\inside,\Sigma)=(\inside_{1},\Sigma_{g_1})\#_{b}...\#_{b}(\inside_{n},\Sigma_{g_n}),
\end{equation}  
where $\inside_{i}$ is $\partial$-prime,
$\Sigma_{g_i}=\partial \inside_i$ is a surface of genus $g_i$, 
for every $i=1,\dots,n$, and $\sharp_b$ is boundary connected sum.
We can further assume that the separating disks intersect at the base point $\ast$. 
Since a $3$-manifold that can be embedded in $\sphere$ 
is $\partial$-prime if and only if its fundamental group is indecomposable 
\cite[Proposition $2.15(5)$]{Suz:75}, this $\partial$-prime decomposition of $\inside$
induces the free product decomposition of $\pi_1(\inside,\ast)$ with indecomposable factors.

Now, we want to use $\phi_\inside$ to show that the $\partial$-prime decomposition of $\inside$ induces a $\partial$-prime decomposition of $\inside'$.

To see this, we first recall the free product decomposition theorem 
\cite[p. $27$, Sec. $35$, Vol. $2$]{Kur:60}  
which states that two free product decompositions of 
a group with indecomposable factors are isomorphic. This implies  
that the isomorphism $\phi_\inside$ induces the free product decomposition of $\pi_1(\inside',\ast)$ with indecomposable factors.

\smallskip
{\it Step 3}: $\partial$-prime decomposition

On the other hand, by Dehn's lemma, there exists a decomposition of $(\inside',\Sigma')$:
\[
(\inside',\Sigma')=(\inside_{1}',\Sigma_{g_1}')\#_{b}...\#_{b}(\inside_{n}',\Sigma_{g_n}') 
\]
induced by the disks in $\inside'$ that are bounded 
by the loops $f_\Sigma(\partial D_{i})$, $i=1,...,n$,
where $D_{i}$, $i=1,..,n$, are the separating disks 
in the $\partial$-prime decomposition of $\inside$ \cite[Condition ($\ast$), p.$186$]{Suz:75}. 
At this stage, we can extend $f_{\Sigma}$ over $\bigcup_{i=1}^{n}D_{i}$.

We want to show that this decomposition is $\partial$-prime
and induces the free product decomposition of $\pi_1(\inside',\ast)$
in Step $2$. To see this, it suffices to prove that $\pi_1(\inside_i',\ast')$
is indecomposable, which follows, provided $\phi_{\inside}$ sends $\pi_1(\inside_i,\ast)$ into $\pi_1(\inside_i',\ast')$, for every $i$.

Recall that the Kurosh subgroup theorem \cite[Section $34$]{Kur:60}
asserts that any indecomposable subgroup $H\neq \mathbb{Z}$ 
in a free product $G_{1}\ast G_{2}$ with $H\cap G_{i}$ non-empty, where $i$ is 
either $1$ or $2$, is a subgroup of $G_{i}$.
(see \cite[$4.1$]{Sta:65}.

Now, if $\inside_i$ is $\partial$-irreducible, then $\pi_1(\inside_i,\ast)\neq \mathbb{Z}$ \cite[Prop. $2.15$]{Suz:75}.  
If $\inside_i$ is not $\partial$-irreducible, then $\inside_i$ is a solid torus.
For the former, because $\phi_\Sigma(\pi_1(\Sigma_i,\ast))$ is in $\pi_1(\Sigma_i',\ast')$,
$\phi_{\inside}(\pi_1(\inside_i,\ast))\cap\pi_1(\inside_i',\ast')$ is nonempty. So, 
the Kurosh subgroup theorem implies that 
$\phi_{\inside}$ also sends $\pi_1(\inside_i,\ast)$ into $\pi_1(\inside_i',\ast')$.
For the latter, the induced homomorphism
from $\pi_1(\Sigma_i,\ast)$ to $\pi_1(\inside_i,\ast)$ is surjective, and hence 
$\phi_{\inside}$ also sends $\pi_1(\inside_i,\ast)$ into $\pi_1(\inside_i',\ast')$.

So far, we have established that  
$\phi_{\inside}$ (resp. $\phi_{\Sigma}=f_{\Sigma,\ast}$) preserves the free product decompositions 
(resp. with amalgamation) given by 
the $\partial$-prime decompositions of $(\inside,\Sigma)$ and $(\inside^{\prime},\Sigma^{\prime})$. 
 
In particular, $\phi_\inside$ and $\phi_\Sigma$ induce isomorphisms between the fundamental groups of 
corresponding $\partial$-prime factors. That is, they induce the following 
commutative diagram
\begin{center}
\begin{equation}\label{primepartfundamentalspan}
\begin{tikzpicture}[baseline = (current bounding box.center)] 
\node(Lu) at (0,2) {$\pi_{1}(\inside_{i},\ast)$};
\node(Ll) at (0,0) {$\pi_{1}(\Sigma_{g_{i}},\ast))$}; 
\node(Ru) at (4,2) {$\pi_{1}(\inside^{\prime}_{i},\ast^\prime)$};
\node(Rl) at (4,0) {$\pi_{1}(\Sigma_{g_{i}}^{\prime},\ast^\prime)$};

\path[->, font=\scriptsize,>=angle 90]

(Lu) edge node [above]{$\sim$}(Ru)  
(Ll) edge (Lu)
(Ll) edge node [above]{$\sim$}(Rl) 
(Rl) edge (Ru);

\end{tikzpicture}
\end{equation}
\end{center}
for every $i=1,\dots,n$. 
Note that the lower isomorphism can be realized by the restriction of $f_{\Sigma}$ on $\Sigma_{g_{i}}\cup D_i$.

\smallskip
{\it Step 4}: Applying Waldhausen's theorem to \eqref{primepartfundamentalspan}.
 
If $\inside_{i}$ is $\partial$-irreducible, one can construct a diffeomorphism 
that realizes the upper isomorphism by Waldhausen's theorem \cite[Theorem $6.1$]{Wal:68}   
and \cite[Corollary, p.$333$]{Mun:59}

If $\inside_i$ is not $\partial$-irreducible, 
then $\inside_{i}$ is a solid torus, and there is an obvious 
diffeomorphism realizing the upper isomorphism. 

Taking boundary connected sum, we obtain a diffeomorphism $f_{\inside}$ that realizes 
the upper part of diagram \eqref{EqbetweenCS} and extends $f_{\Sigma}$. 

In the same way, one can construct $f_{\outside}:\outside\rightarrow \outside^{\prime}$ 
that extends $f_{\Sigma}$ and realizes the lower part of \eqref{EqbetweenCS}. 
Thus, the connected scenes $\SSS$ and $\SSS^{\prime}$ are equivalent.
\end{proof}
   
\begin{remark}
The theorem is true even when the diagram \eqref{EqbetweenCS}
commutes only up to conjugacy or when the base point of $\inside$ or $\outside$ 
(resp. $\inside'$ or $\outside'$)
is not on $\Sigma$ (resp. $\Sigma'$).
For the latter, the homomorphisms ${i_\inside}_*$ and ${i_\outside}_*$ 
(resp. ${i_\inside'}_*$ and ${i_\outside'}_*$) depend on a choice of arcs
connecting the base points in $\inside$ and $\outside$ to $\ast\in\Sigma$.  
To see the theorem still holds true, 
we observe that, firstly by modifying $\phi_\inside$
or $\phi_\outside$, one can make the diagram commute strictly, and secondly,
one can use the same arcs that connect the base points in $\inside$ and $\outside$
to $\ast\in\Sigma$ to move the base point back to 
the common base point $\ast$ on $\Sigma$. The proof then reduces to the case
of the theorem.  
\end{remark}






\section{Examples}\label{sec:examples}

In this section we present methods to produce connected scenes 
with homeomorphic complements and discuss some explicit examples constructed using 
these methods.
The properties of these connected scenes are stated here; their proofs
employ invariants derived from the group span with pairing and
are given in Section \ref{sec:representations}.   

\subsection{Handlebody Knots}\label{subsec:handlebody_knots}
Our first construction concerns handlebody knots, and 
is used to produce inequivalent handlebody knots 
with homeomorphic complements and is a generalization of 
Motto's and Lee-Lee's constructions \cite{Mot:90}, \cite{LeeLee:12}. 

We begin by recalling that a Dehn twist 
of a standard cylinder $\Sbb^{1}\times 
\closedintervalzeroone$ in $\mathbb{R}^{3}$, $\closedintervalzeroone = [0,1]$,  
is a boundary-fixing self-homeomorphism given by
\begin{align*}
\Sbb^{1}\times \closedintervalzeroone& \rightarrow \Sbb^{1}\times \closedintervalzeroone\\
(p,\tempo)&\mapsto (e^{2\pi i \tempo}p,\tempo).
\end{align*}  

This homeomorphism can be 
extended to a self-homeomorphism of a standard cylindrical shell in $\mathbb{R}^{3}$, 
\begin{equation}
\begin{aligned}\label{eq:Dehn_twist}
t&:A\times \closedintervalzeroone\rightarrow A\times \closedintervalzeroone\\
&(p,\tempo)\mapsto (e^{2\pi i\tempo}p, \tempo),
\end{aligned}
\end{equation}
where $A:=\{(x,y)\in\mathbb{R}^{2}\mid \frac{1}{2}\leq x^{2}+y^{2}\leq 1 \}$ is an annulus.
\footnote{Using an annulus instead of a disk allows us to cover all of our examples.
We could use a disk instead of an annulus as in \cite{LeeLee:12}, 
with the central disk treated as, say, $D_{n+1}$,
for some of our examples,
{\it e.g.} annulus $A_1$ of Example \ref{exa:HK5_1} and annulus $A_1$
of Example \ref{exa:HK6_2} could be replaced by a disk.
However, this is not possible for the annuli $A_2$ in Examples 
\ref{exa:HK5_1} and \ref{exa:HK6_2}.
Also, twisting in an annulus is required for 
the construction in \cite{Mot:90} but still covers the cited examples.}
Now, suppose $\bigcup_{i=1}^{n}D_{i}$ is a union of pairwise
disjoint disks contained
in $\mathring{A}$, the interior 
of $A$. Then $t$ restricts to an embedding of 
$(A\setminus \bigcup_{i=1}^{n}\mathring{D}_{i})\times  
\closedintervalzeroone$ in $A\times \closedintervalzeroone$,
which twists the void cylinders $D_i\times \closedintervalzeroone$ 
in $A\times \closedintervalzeroone$, $i=1,...,n$.
The sign of such a twisting can be defined 
by the sign of the crossings of the void cylindrical parts 
with the inner cylinder. 
Fig. \ref{fig:Signs_of_twistings} illustrates the embeddings 
induced by $t$ and $t^{-1}$.
The embedding induced by $t^{\pm j}$ 
gives $j$ full $\pm$-twists.

\begin{center}
\begin{figure}[ht]
\def\svgwidth{0.68\columnwidth}
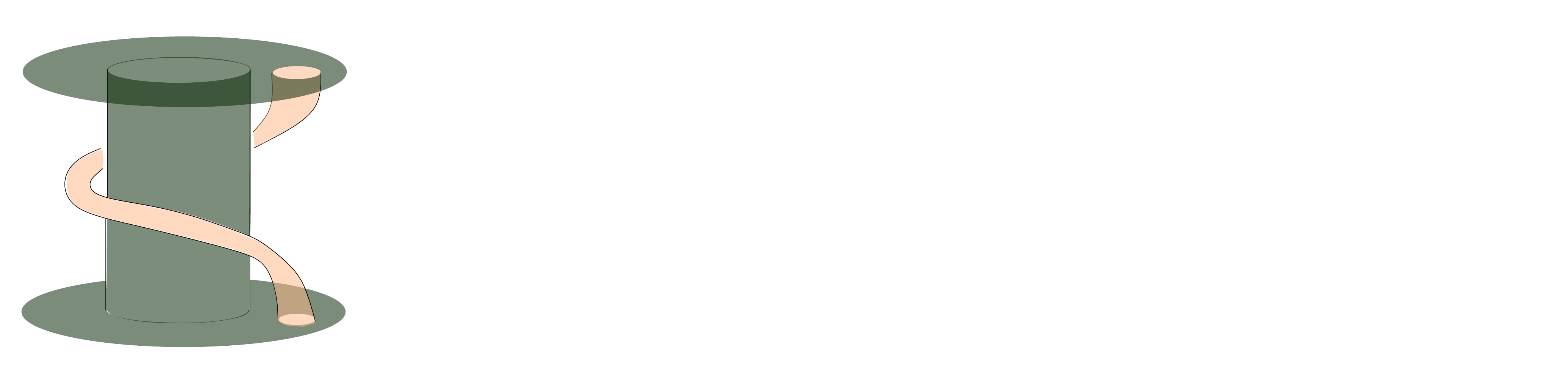
\caption{Signs of a twisting.}
\label{fig:Signs_of_twistings}
\end{figure}
\end{center}

With this in mind, 
we may now describe the generalization of 
Motto's \cite{Mot:90} and Lee-Lee's 
\cite{LeeLee:12}
constructions.
Consider a $3$-manifold $M$ embedded in $\sphere$, 
for example, the closure of the complement of 
a handlebody knot, and suppose 
there exists an oriented annulus $A$ embedded in $\sphere$ 
such that the intersection $A\cap M$ is properly embedded in $M$ 
and diffeomorphic to an annulus 
with some (open) disks removed from $\mathring{A}$, namely 
$A\setminus \bigcup_{i=1}^{n} \mathring{D}_i\subset M$, where 
$\bigcup_{i=1}^{n}D_i
= \mathring{A}\cap  \left(\overline{\sphere\setminus M}\right)$.

Let $\mathfrak{N}(A)$ 
be a tubular neighborhood of $A$ in 
$M \cup\bigcup_{i=1}^{n} \mathfrak{N}(D_{i})$
such that $\mathfrak{N}(A)\cap M$ is 
a tubular neighborhood of the surface
$A\setminus \bigcup_{i=1}^{n} \mathring{D}_{i}$ (properly embedded) in $M$,
and $\mathfrak{N}(A) \cap \left(\overline{\sphere\setminus M}\right)$ consists of 
a tubular neighborhood of $\bigcup_{i=1}^{n}D_i$ in $\overline{\sphere\setminus M}$
and a tubular neighborhood of $\partial A$ in $\partial M$, where $\mathfrak{N}(D_{i})$ is 
a tubular neighborhood of $D_{i}$ in $\overline{\sphere\setminus M}$.
Furthermore, if
one component of $\partial A$ is selected to be the inner circle, 
then $\mathfrak{N}(A)$ can be identified 
with the standard cylindrical shell in $\mathbb{R}^{3}$ described above, and thus one can determine the sign of the twisting.

Now consider 
$$
M_A
:= M \cup \mathfrak{N}(A) \subset \sphere.
$$ 
Then the twist map $t:A\times \closedintervalzeroone\rightarrow A\times \closedintervalzeroone$ in 
\eqref{eq:Dehn_twist}
induces a self-homeomorphism
\[t_{M,A}:M_A \rightarrow M_A,\]
and the composition 
\[M\subset M_A\xrightarrow{t_{M,A}} M_A\subset \sphere\] 
is a new embedding of $M$ in $\sphere$. 
More generally, composing $t_{M,A}$ (resp. $t^{-1}_{M,A}$) with itself $j$ times, 
one gets a self-homeomorphism $t^{\pm j}_{M,A}:M_A\rightarrow M_A$, 
for each $j\in\mathbb{Z}$, and thus an infinite family of embeddings of $M$ in $\sphere$. 
Note that, to produce inequivalent embeddings of $M$, it is necessary that 
$\mathring{A}\cap \left(\overline{\sphere\setminus M}\right)$ is not empty.

\noindent
\begin{example}[\textbf{$\operatorname{HK}5_{1}$}]\label{exa:HK5_1}
The handlebody knot $5_{1}$ in Ishii et al. \cite{IshKisMorSuz:12}, 
denoted by $\operatorname{HK}5_{1}$ in the present paper, 
can be represented as in
Fig. \ref{HK5_1}. 

\begin{center}
\begin{figure}[ht]
\includegraphics[scale=.12]{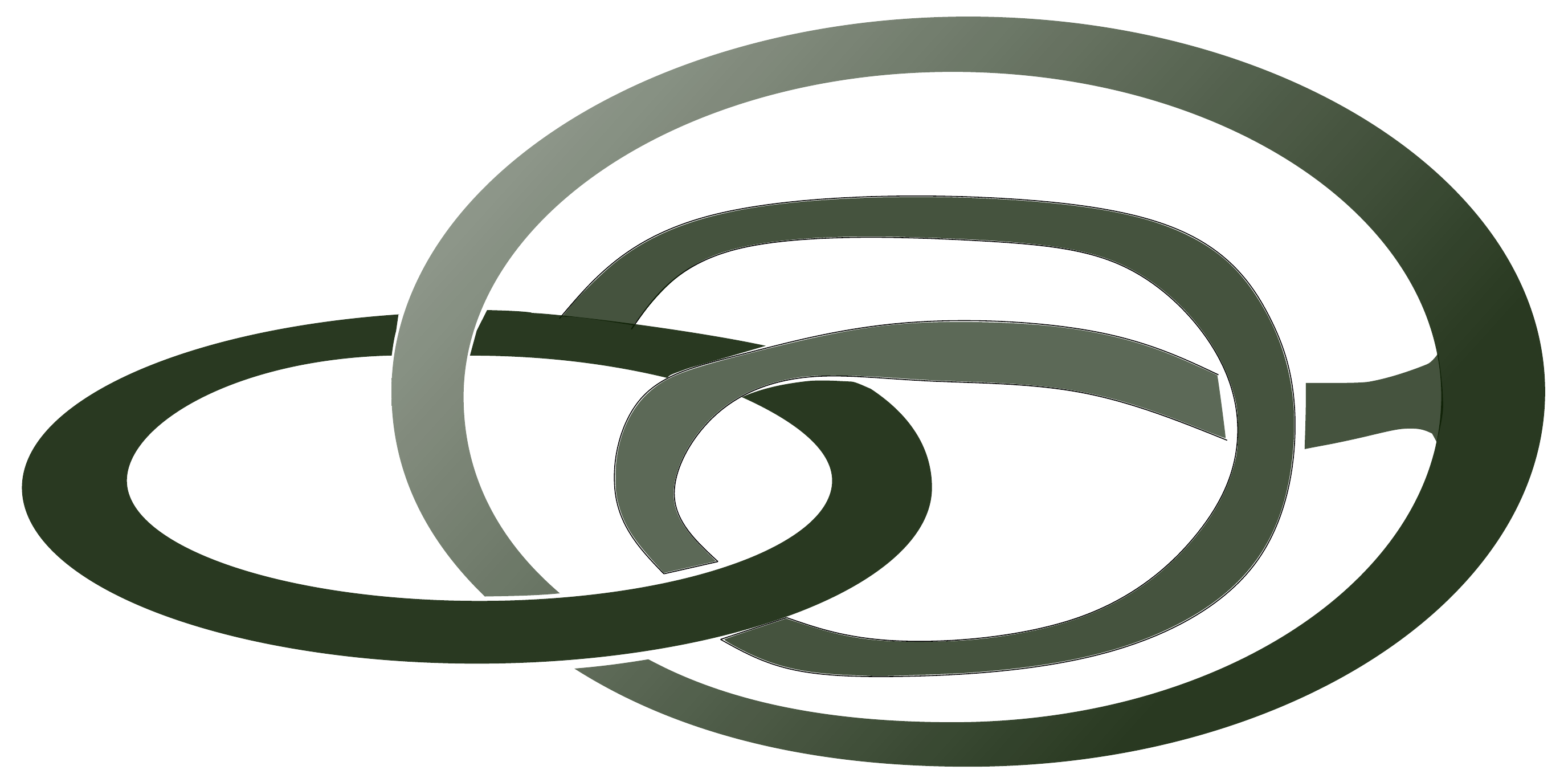}
\caption{The handlebody knot $\HKfiveone$.}
\label{HK5_1}
\end{figure}
\end{center}

Observe that there is an oriented annulus 
$A_{1}$ and a disk $D_{1}$ in $\sphere$ 
such that $A_{1}\setminus \mathring{D}_{1}$ 
is properly embedded in $M$, the closure of 
$\sphere\setminus \operatorname{HK}5_{1}$ 
(Fig. \ref{HK5_1_A1}).
 
\begin{center}
\begin{figure}[ht]
\def\svgwidth{0.68\columnwidth}
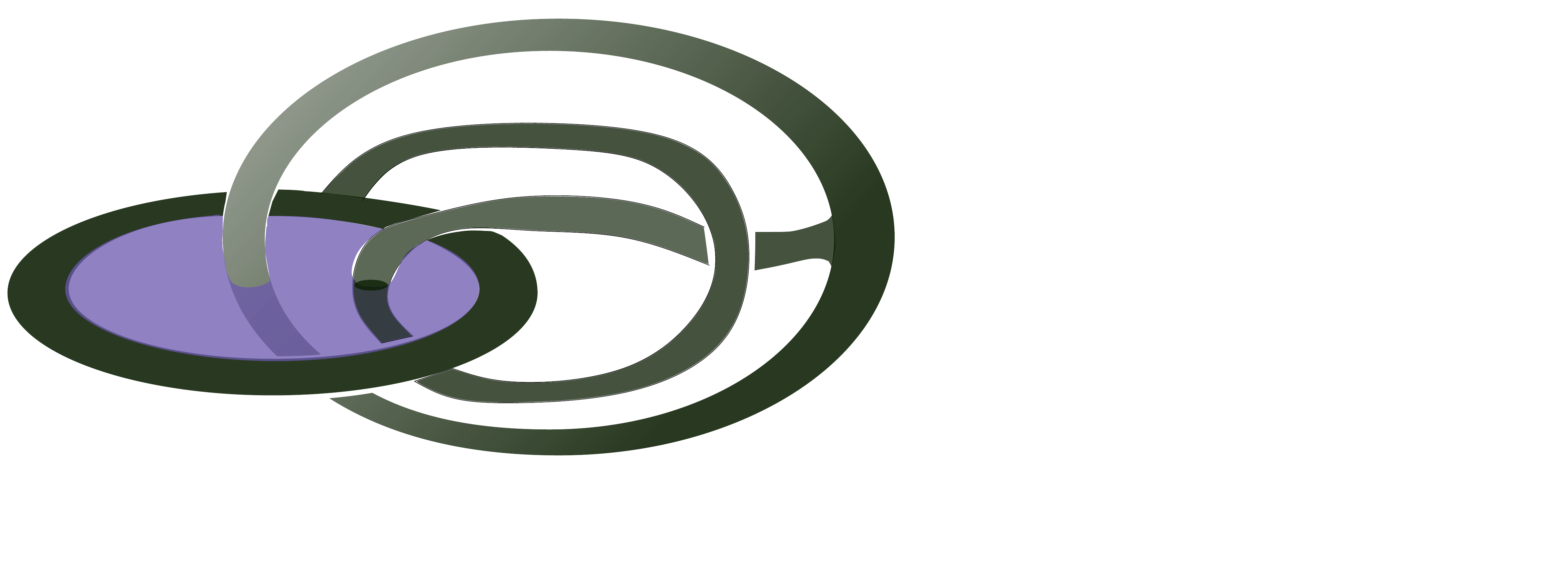
\caption{Annulus $A_{1}$.}
\label{HK5_1_A1}
\end{figure}
\end{center}

Now orient $A_1$ such that
the side with the plus sign is where 
the normal direction goes out, 
and select the obvious component of $\partial A_1$ to be the inner circle. 
Then, applying the twisting map $t_{M,A_1}$, 
we obtain a family of handlebody knots with homeomorphic complements. 
In particular, there are the $-A_{1}$-twisted $\operatorname{HK}5_{1}$ (Fig.
\ref{twistedHK51_A1}, left) and $+A_{1}$-twisted $\operatorname{HK}5_{1}$ 
(Fig. \ref{twistedHK51_A1}, right) 
obtained by applying $t^{-1}_{M,A_{1}}$ and $t_{M,A_{1}}$, respectively. 
 
\begin{figure}[ht]
    \centering
    \subfloat{ {\includegraphics[scale=.12]{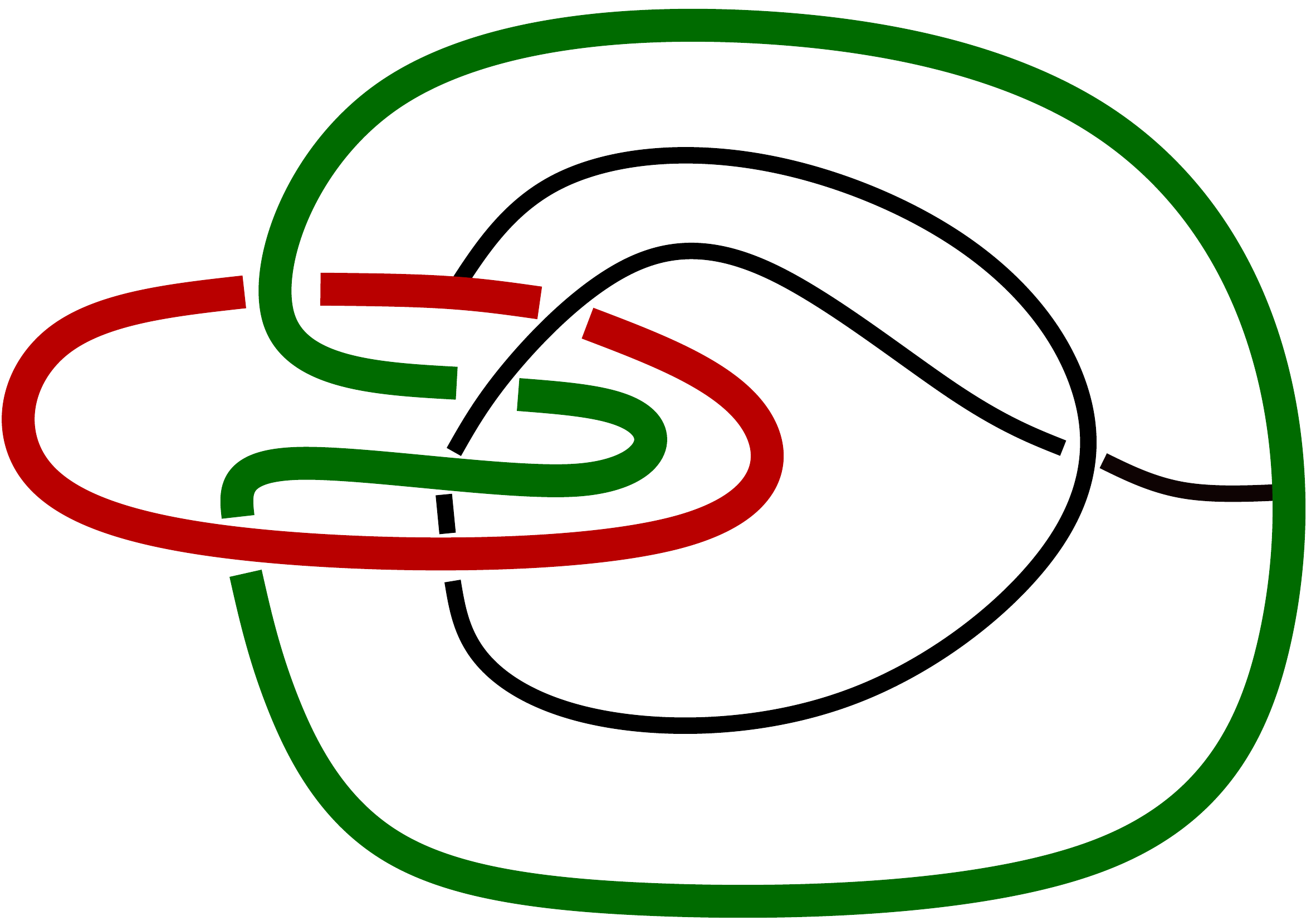}} }%
    \qquad
    \subfloat{ {\includegraphics[scale=.12]{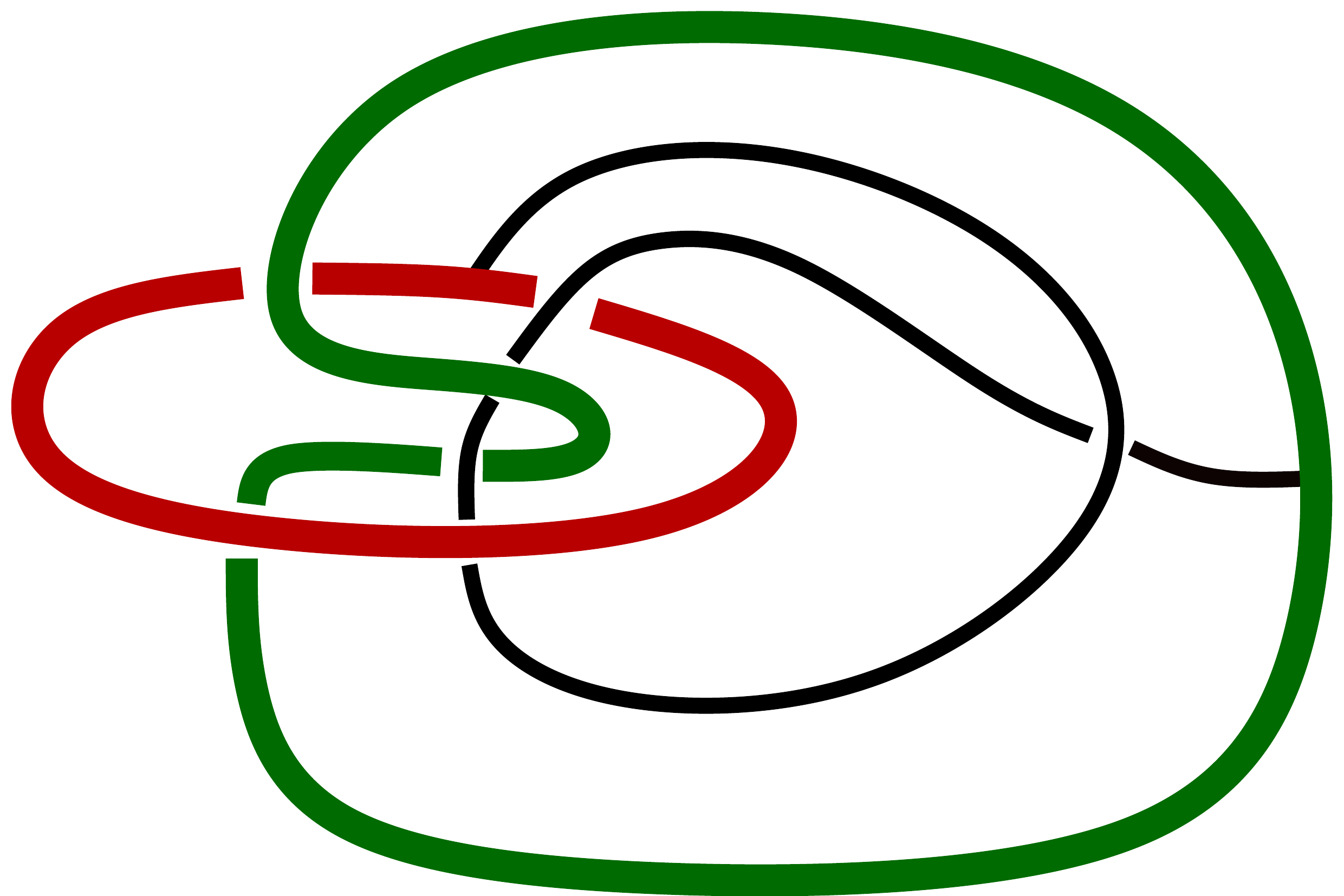}} }%
    \caption{$\mp A_{1}$-twisted $\operatorname{HK}5_1$ 
(Example \ref{exa:HK5_1}).}%
    \label{twistedHK51_A1}%
\end{figure}

These three handlebody knots are in fact 
equivalent to the handlebody knots $V_{0}$, $V_{-1}$ and $V_{+1}$ in \cite{LeeLee:12}
since by the moves described in \cite[p.1062 $(a)$]{LeeLee:12}   
the annulus $A_{1}$ can be deformed into the annulus used there.
In particular, $-A_{1}$-twisted $\operatorname{HK}5_{1}$ is equivalent to
the handlebody $\operatorname{HK}6_{4}$ of \cite{IshKisMorSuz:12}.

There is another oriented annulus $A_{2}$ and a disk 
$D_{2}$ embedded in $\sphere$ such that $A_{2}\setminus \mathring{D}_{2}$ is properly embedded in $\sphere\setminus\operatorname{HK}5_1$ as illustrated in 
Fig. \ref{HK5_1_A2}---we select the bigger circle in $\partial A_2$
in Fig. \ref{HK5_1_A2} to be the inner circle. 

\begin{center}
\begin{figure}[ht]
\def\svgwidth{0.68\columnwidth}
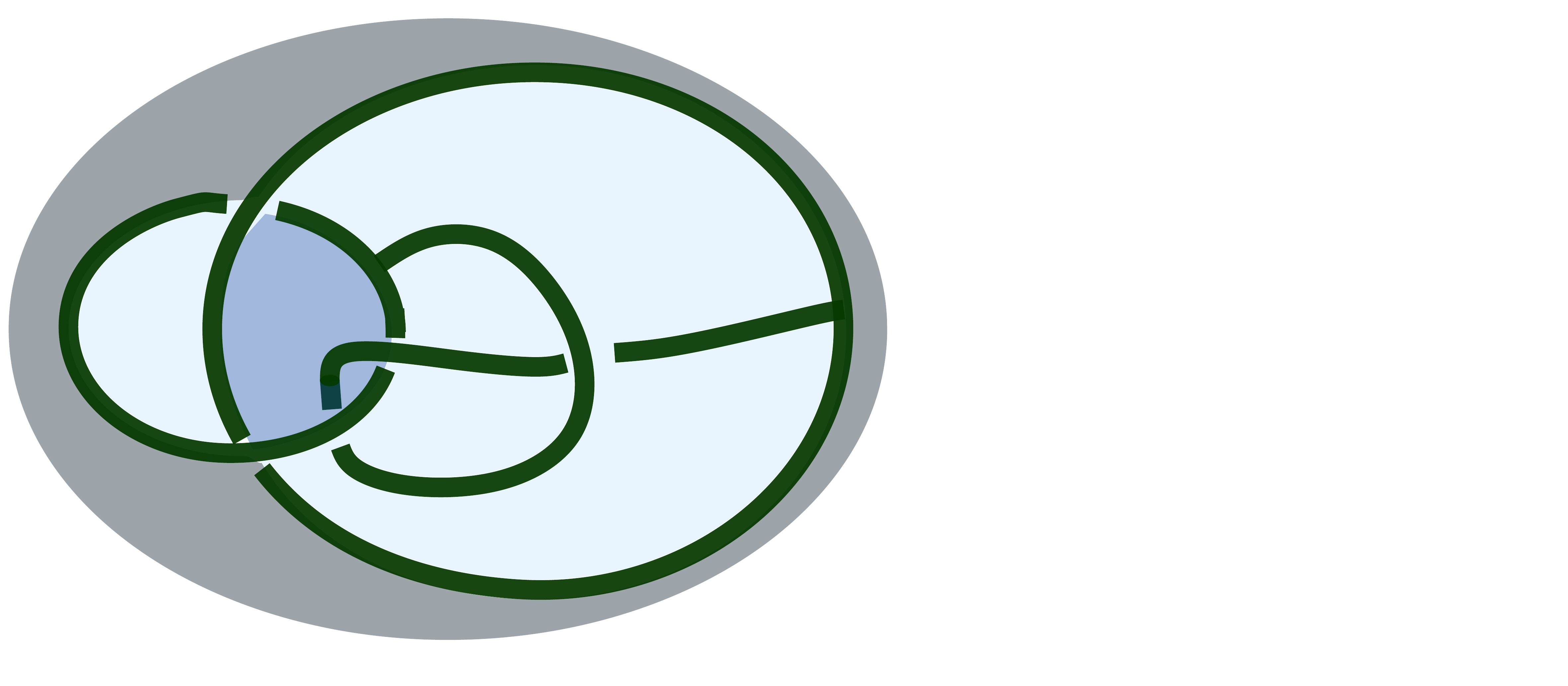
\caption{Annulus $A_{2}$.}
\label{HK5_1_A2}
\end{figure}
\end{center}

Applying $t_{M,A_{2}}$ to $\sphere\setminus\operatorname{HK}5_1$, 
we obtain another family of handlebody knots. 
Especially, there are
$\pm A_2$-twisted $\HKfiveone$ (see Fig. \ref{twistedHK51_A2};
left for the $-A_2$-twisted $\operatorname{HK}5_1$ and right for the other one).

\begin{figure}[ht]
    \centering
    \subfloat{ {\includegraphics[scale=.1]{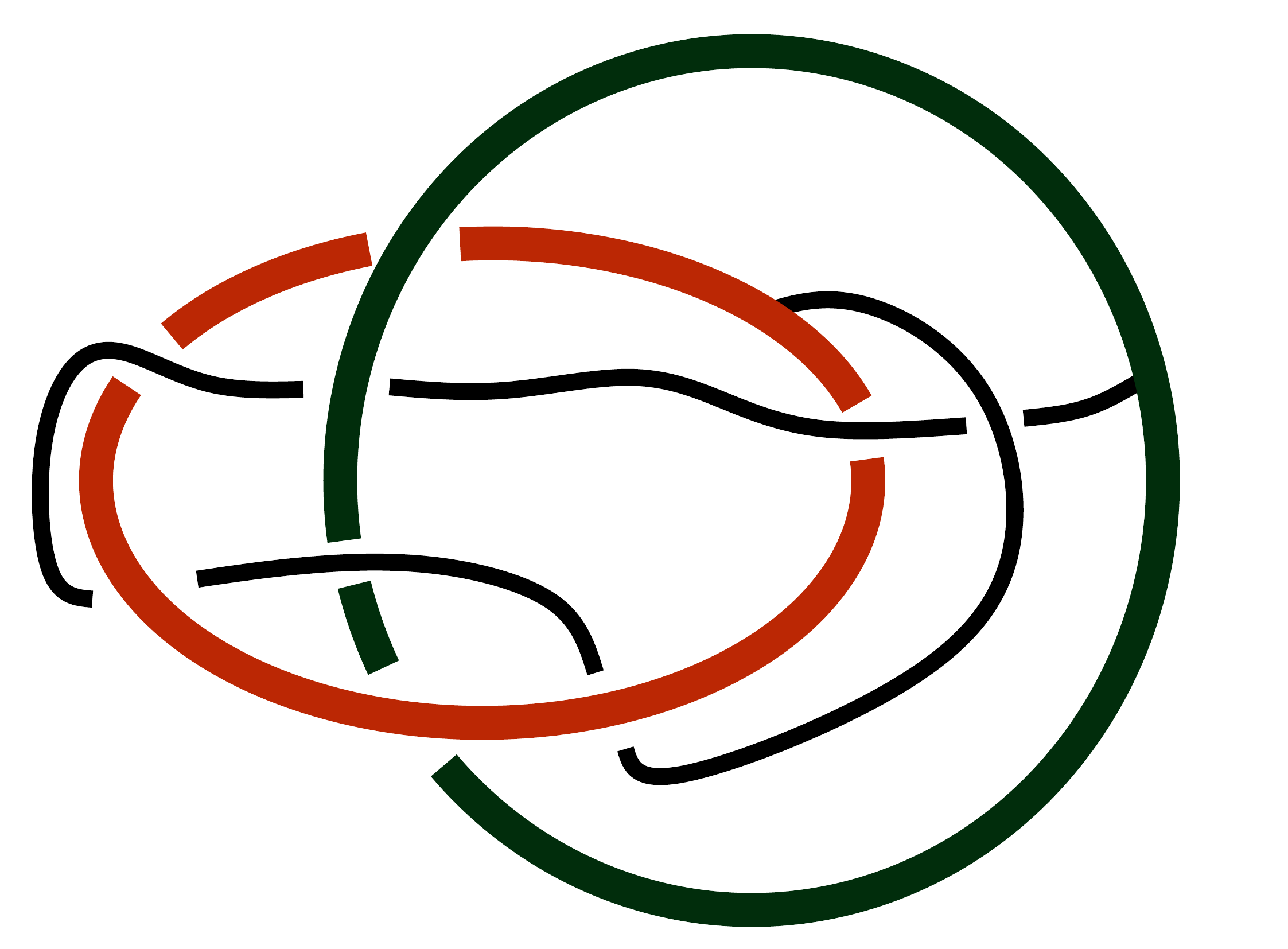}} }%
    \qquad
    \subfloat{ {\includegraphics[scale=.1]{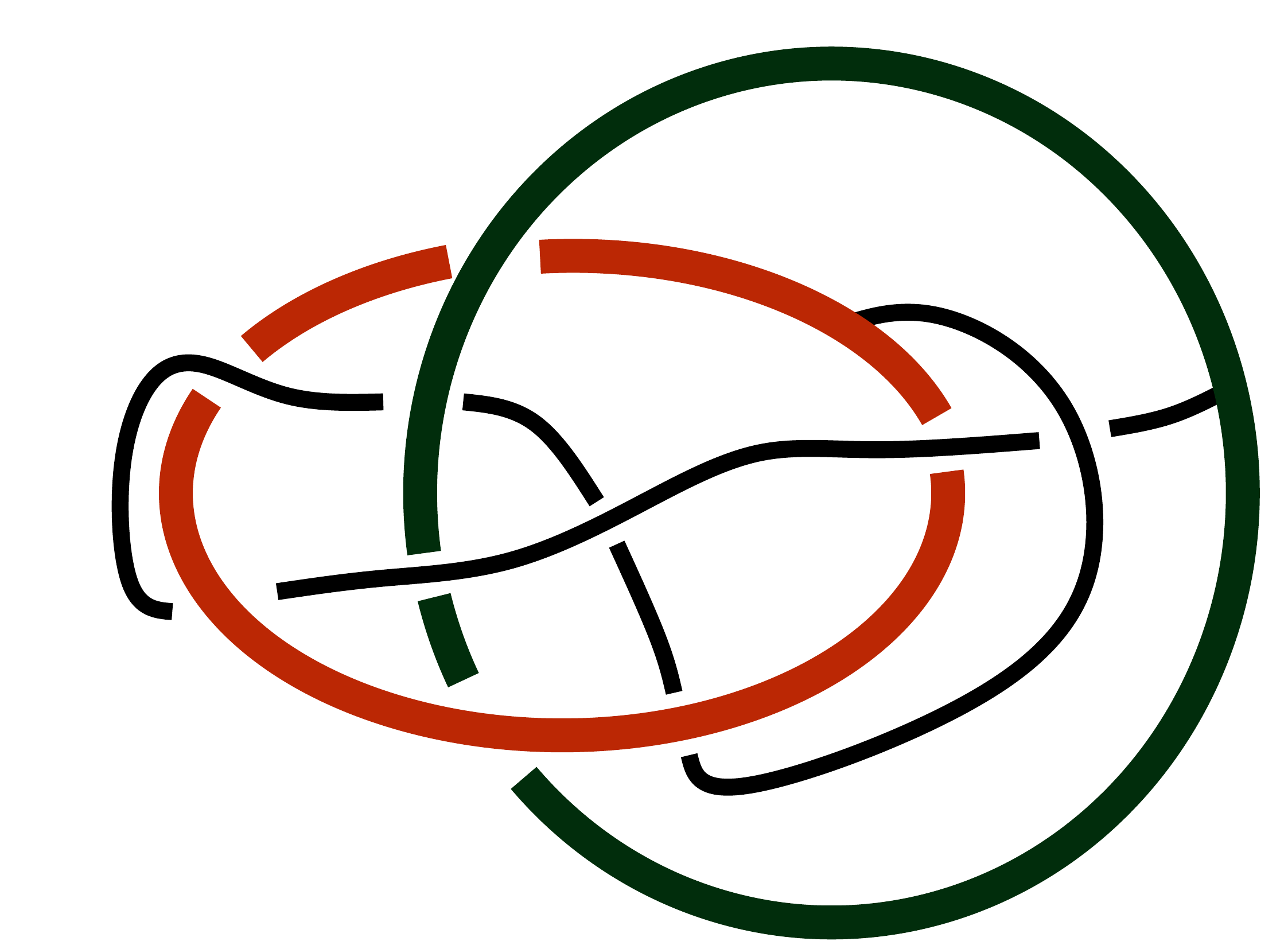}} }%
    \caption{$\mp A_{2}$-twisted $\operatorname{HK}5_1$
(Example \ref{exa:HK5_1}).}%
    \label{twistedHK51_A2}%
\end{figure}
  
\end{example}
 
\begin{example}[\textbf{$\operatorname{HK}6_{2}$}]\label{exa:HK6_2}
For our second set of examples, we consider the handlebody 
knot $\operatorname{HK}6_{2}$ 
corresponding to $\operatorname{6}_{2}$ 
in Ishii et al.'s knot table \cite{IshKisMorSuz:12},
and observe that there are two embedded oriented annuli $A_{1}$ and $A_2$ 
in $\sphere$ with their interior intersecting with  $\operatorname{HK}6_2$ at disk $D_{1}$
and $D_2$, respectively (see Fig. \ref{HK6_2_A1_A2}; there 
is an obvious choice for the inner circle for $A_1$, whereas
for $A_2$, we identify the horizontal one as the inner circle of 
a standard annulus).

\begin{center}
\begin{figure}[ht]
\includegraphics[scale=.12]{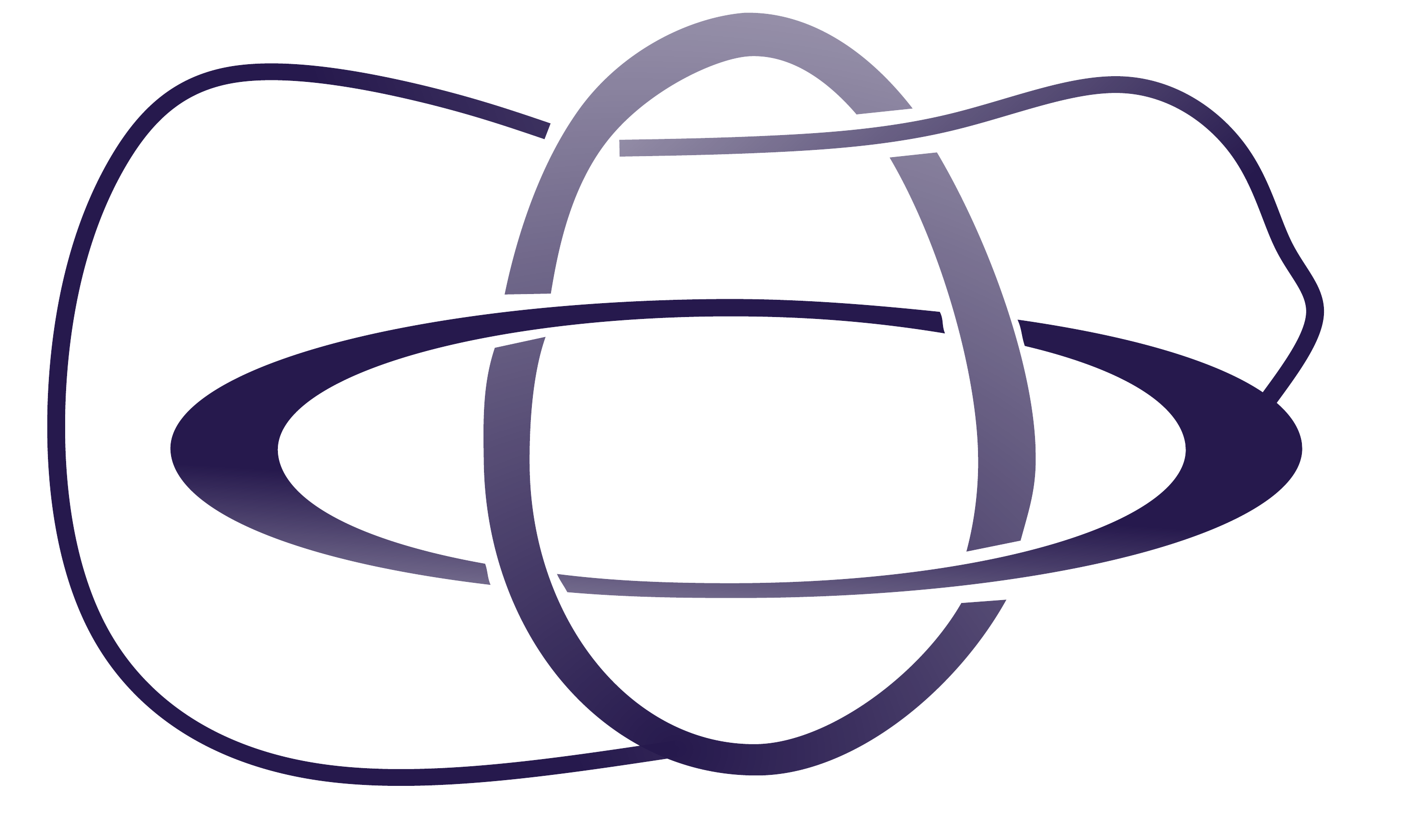}
\caption{The handlebody knot $\operatorname{HK}{6}_{2}$.}
\label{HK6_2}
\end{figure}
\end{center}

\begin{figure}[ht]
    \centering
    \subfloat{ \def\svgwidth{0.41\columnwidth}
\begingroup%
  \makeatletter%
  \providecommand\color[2][]{%
    \errmessage{(Inkscape) Color is used for the text in Inkscape, but the package 'color.sty' is not loaded}%
    \renewcommand\color[2][]{}%
  }%
  \providecommand\transparent[1]{%
    \errmessage{(Inkscape) Transparency is used (non-zero) for the text in Inkscape, but the package 'transparent.sty' is not loaded}%
    \renewcommand\transparent[1]{}%
  }%
  \providecommand\rotatebox[2]{#2}%
  \newcommand*\fsize{\dimexpr\f@size pt\relax}%
  \newcommand*\lineheight[1]{\fontsize{\fsize}{#1\fsize}\selectfont}%
  \ifx\svgwidth\undefined%
    \setlength{\unitlength}{1417.32283465bp}%
    \ifx\svgscale\undefined%
      \relax%
    \else%
      \setlength{\unitlength}{\unitlength * \real{\svgscale}}%
    \fi%
  \else%
    \setlength{\unitlength}{\svgwidth}%
  \fi%
  \global\let\svgwidth\undefined%
  \global\let\svgscale\undefined%
  \makeatother%
  \begin{picture}(1,0.6)%
    \lineheight{1}%
    \setlength\tabcolsep{0pt}%
    \put(0,0){\includegraphics[width=\unitlength,page=1]{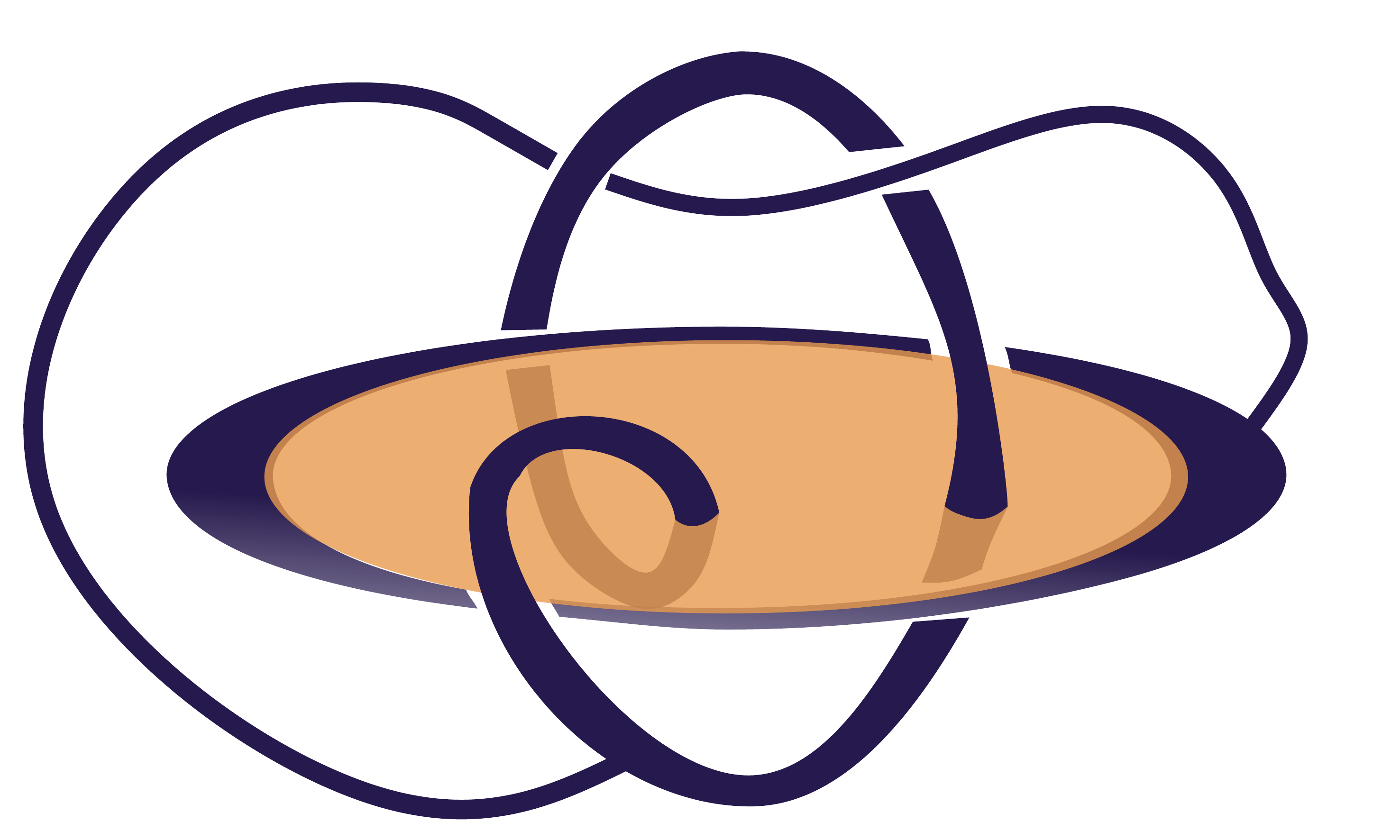}}%
    \put(1.64826904,-0.44155469){\color[rgb]{0,0,0}\makebox(0,0)[lt]{\begin{minipage}{0.28413794\unitlength}\raggedright \end{minipage}}}%
    \put(0.25862423,0.26341912){\color[rgb]{0,0,0}\makebox(0,0)[lt]{\lineheight{1.25}\smash{\begin{tabular}[t]{l}$A_1$\end{tabular}}}}%
    \put(0.38351307,0.30870176){\color[rgb]{0,0,0}\makebox(0,0)[lt]{\lineheight{1.25}\smash{\begin{tabular}[t]{l} \end{tabular}}}}%
    \put(0.7236217,0.23879356){\color[rgb]{0,0,0}\makebox(0,0)[lt]{\lineheight{1.25}\smash{\begin{tabular}[t]{l}{\small $D_1$}\end{tabular}}}}%
    \put(0.83004008,0.49522673){\color[rgb]{0,0,0}\makebox(0,0)[lt]{\lineheight{1.25}\smash{\begin{tabular}[t]{l} \end{tabular}}}}%
    \put(0.57928577,0.29214285){\color[rgb]{0,0,0}\makebox(0,0)[lt]{\lineheight{1.25}\smash{\begin{tabular}[t]{l}$+$\end{tabular}}}}%
  \end{picture}%
\endgroup%

    }%
    \qquad
    \subfloat{ 
    \def\svgwidth{0.41\columnwidth}
\begingroup%
  \makeatletter%
  \providecommand\color[2][]{%
    \errmessage{(Inkscape) Color is used for the text in Inkscape, but the package 'color.sty' is not loaded}%
    \renewcommand\color[2][]{}%
  }%
  \providecommand\transparent[1]{%
    \errmessage{(Inkscape) Transparency is used (non-zero) for the text in Inkscape, but the package 'transparent.sty' is not loaded}%
    \renewcommand\transparent[1]{}%
  }%
  \providecommand\rotatebox[2]{#2}%
  \newcommand*\fsize{\dimexpr\f@size pt\relax}%
  \newcommand*\lineheight[1]{\fontsize{\fsize}{#1\fsize}\selectfont}%
  \ifx\svgwidth\undefined%
    \setlength{\unitlength}{1417.32283465bp}%
    \ifx\svgscale\undefined%
      \relax%
    \else%
      \setlength{\unitlength}{\unitlength * \real{\svgscale}}%
    \fi%
  \else%
    \setlength{\unitlength}{\svgwidth}%
  \fi%
  \global\let\svgwidth\undefined%
  \global\let\svgscale\undefined%
  \makeatother%
  \begin{picture}(1,0.6)%
    \lineheight{1}%
    \setlength\tabcolsep{0pt}%
    \put(0,0){\includegraphics[width=\unitlength,page=1]{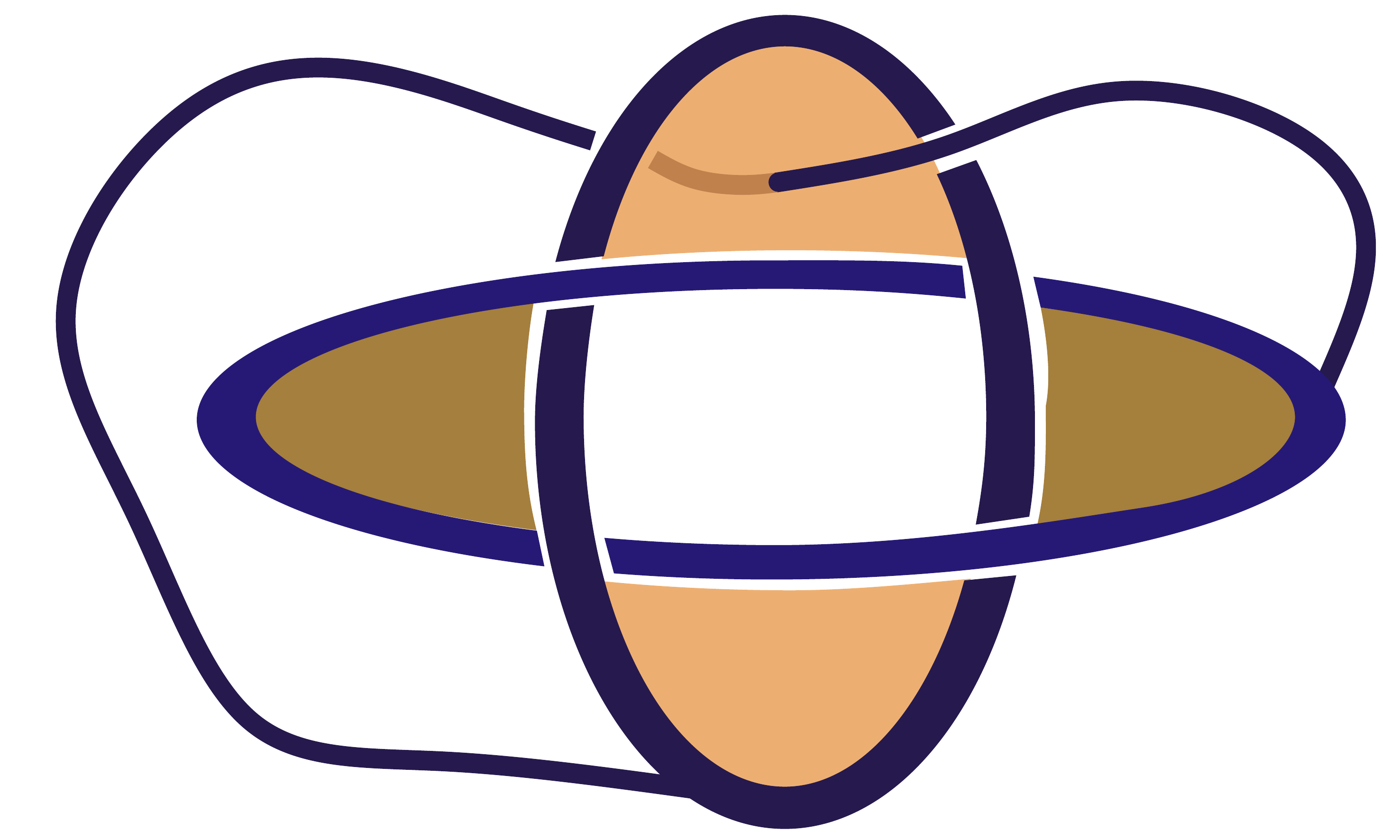}}%
    \put(0.53285712,0.51933809){\color[rgb]{0,0,0}\makebox(0,0)[lt]{\lineheight{1.25}\smash{\begin{tabular}[t]{l}$+$\end{tabular}}}}%
    \put(0.81714288,0.29500003){\color[rgb]{0,0,0}\makebox(0,0)[lt]{\lineheight{1.25}\smash{\begin{tabular}[t]{l}$-$\end{tabular}}}}%
    \put(0.26142856,0.31357144){\color[rgb]{0,0,0}\makebox(0,0)[lt]{\lineheight{1.25}\smash{\begin{tabular}[t]{l}$A_2$\end{tabular}}}}%
    \put(-0.8152981,0.03786951){\color[rgb]{0,0,0}\makebox(0,0)[lt]{\begin{minipage}{0.47784027\unitlength}\raggedright \end{minipage}}}%
    \put(0.57999371,0.42674469){\color[rgb]{0,0,0}\makebox(0,0)[lt]{\lineheight{1.25}\smash{\begin{tabular}[t]{l}{\footnotesize $D_2$}\end{tabular}}}}%
  \end{picture}%
\endgroup%
     
    }%
    \caption{Annuli $A_1$ and $A_2$.}%
    \label{HK6_2_A1_A2}%
\end{figure}


Applying the twist construction to the annuli $A_{1}$ and $A_2$, 
we get two families of handlebody knots with homeomorphic complements. 
We record $\mp A_1$-twisted $\operatorname{HK}6_2$ and
$\mp A_2$-twisted $\operatorname{HK}6_2$ in Fig. \ref{twistedHK62_A1} 
and \ref{twistedHK62_A2} 
\begin{figure}[ht]
    \centering
    \subfloat{ {\includegraphics[scale=.1]{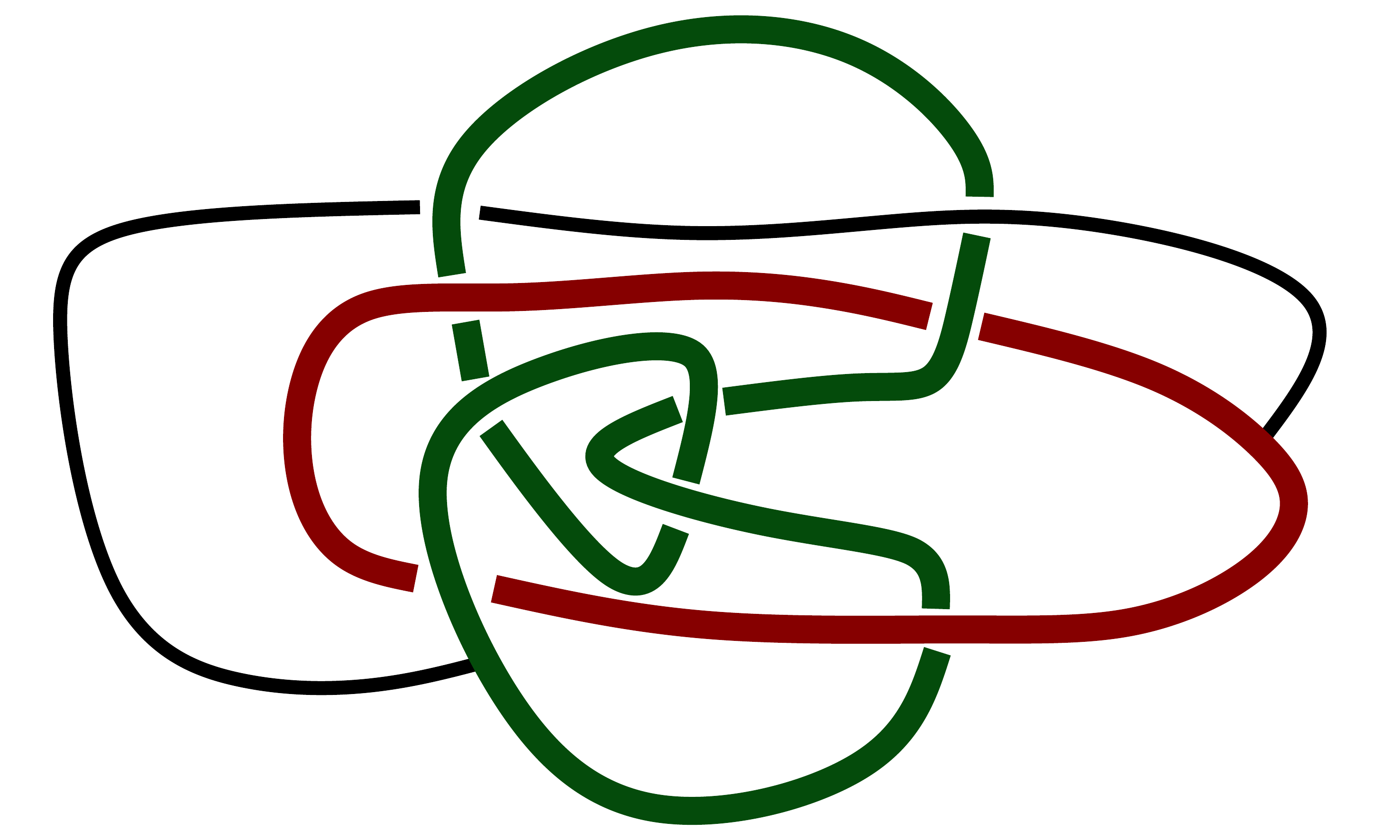}} }%
    \qquad
    \subfloat{ {\includegraphics[scale=.1]{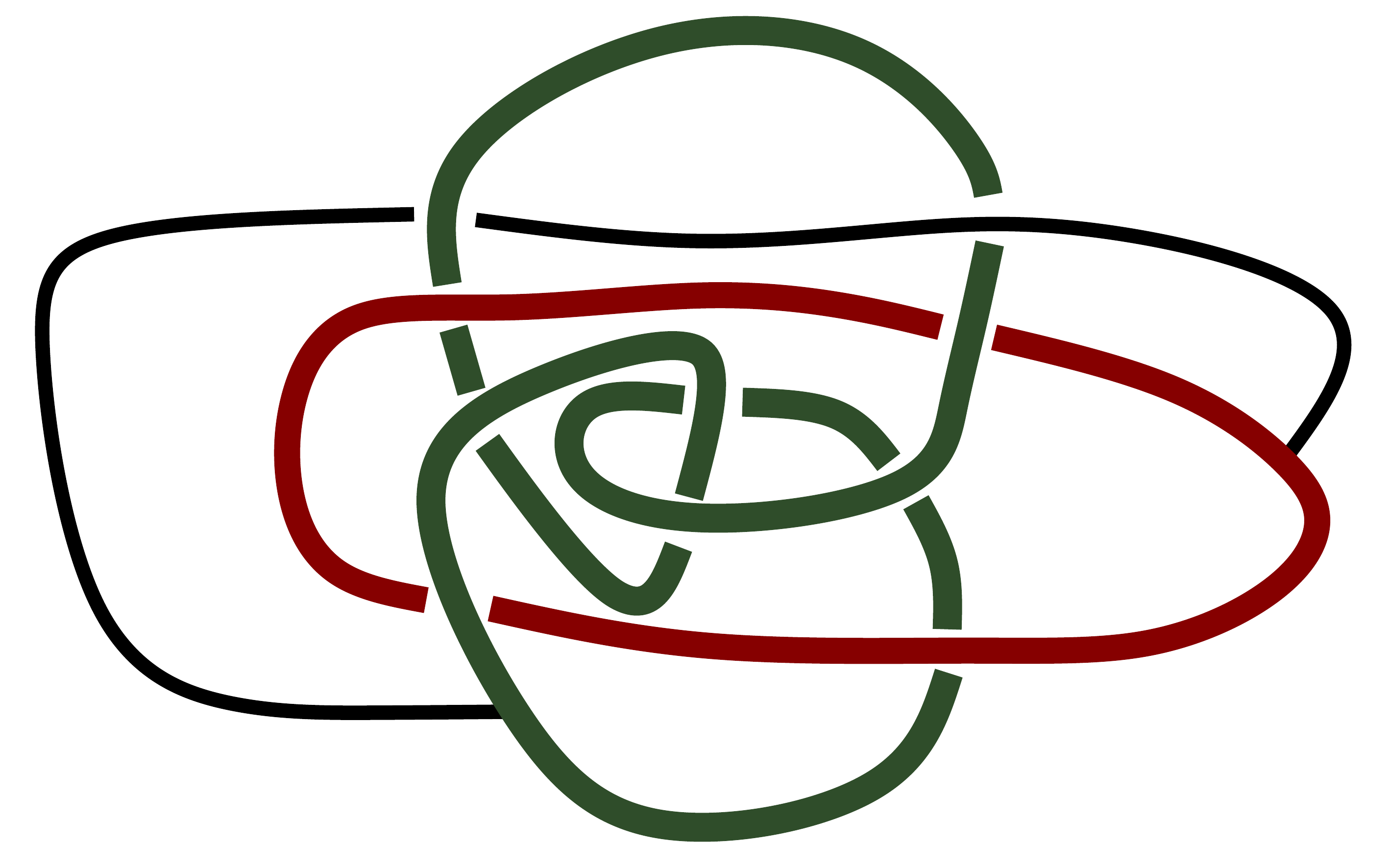}} }%
    \caption{left: $-A_{1}$-twisted $\operatorname{HK}6_2$; right: $+A_1$-twisted 
    $\operatorname{HK}6_2$.}%
    \label{twistedHK62_A1}%
\end{figure}
%
%
%
%
and get the corresponding $-A_{2}$-twisted $\operatorname{HK}6_2$ (Fig. \ref{twistedHK62_A2}, left) and $+A_{2}$-twisted $\operatorname{HK}6_2$ 
(Fig. \ref{twistedHK62_A2}, right).

\begin{figure}[ht]
    \centering
    \subfloat{ {\includegraphics[scale=.1]{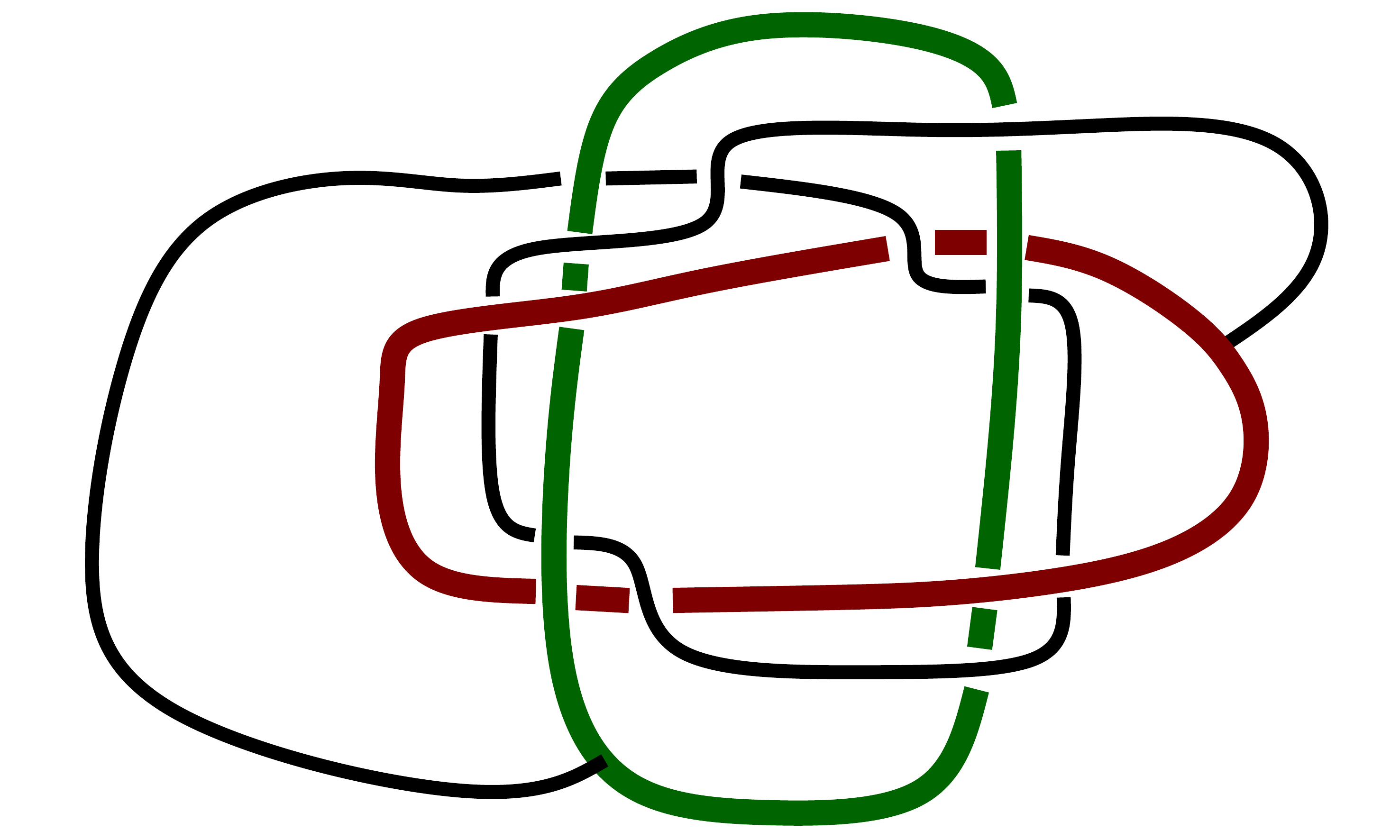}} }%
    \qquad
    \subfloat{ {\includegraphics[scale=.1]{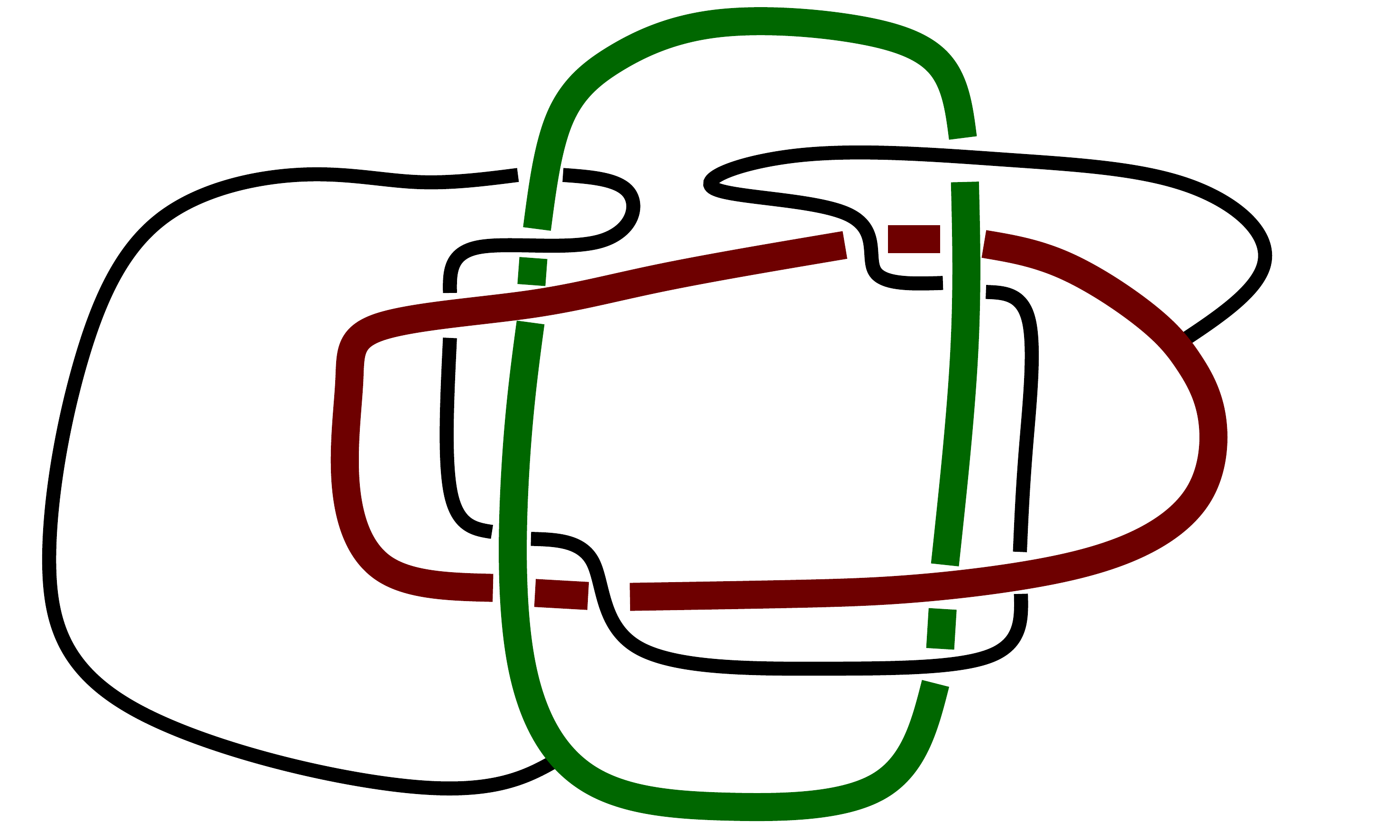}} }%
    \caption{left: $- A_{2}$-twisted $\operatorname{HK}6_2$; right: $+A_2$-twisted
    $\operatorname{HK}6_2$.}%
    \label{twistedHK62_A2}%
\end{figure}

\end{example}

In Subsection \ref{subsec:the_g_image_of_meridians} (Tables \ref{The_A_5_image_HK5_1}, \ref{The_A_5_image_HK6_2}),
we shall prove the following results by computing invariants derived from Theorem \ref{teo:complete_invariant}.

\begin{proposition}\label{Inequivalent_HK_with_homeo_complement}
The following holds.\\
$\bullet$ $\operatorname{HK}5_1$, $-A_1$-twisted $\operatorname{HK}5_1$, $+A_1$-twisted $\operatorname{HK}5_1$ and $+A_{2}$-twisted $\operatorname{HK}5_1$ 
are not ambient isotopic.

\noindent
$\bullet$
$-A_{2}$-twisted $\operatorname{HK}5_1$ is 
ambient isotopic to $\operatorname{HK}5_1$.

\noindent  
$\bullet$ $+A_{1}$-twisted $\operatorname{HK}5_1$ has crossing number $=7$.

\noindent
$\bullet$ 
Among $\operatorname{HK}6_2$, $\mp A_{1}$-twisted $\operatorname{HK}6_2$ and $\mp A_{2}$-twisted $\operatorname{HK}6_2$, 
there are at least three inequivalent handlebody knots.

\noindent
$\bullet$ $-A_{1}$-twisted $\operatorname{HK}6_2$ has crossing number $=7$.
\end{proposition}

\subsection{Unswappable scenes}\label{subsec:Unswappable_scenes}
In this short subsection we present a construction of unswappable scenes of 
genus $3$ and prove Theorem \ref{Unswappable_scenes_of_genus_3}.

Let $(\inside,\Sigma,\outside)$ be a handlebody knot, and suppose there exists a loop $l$ on
$\Sigma$ which intersects with only one meridian $m$ in a complete system
of meridians of $\inside$ and bounds a properly embedded disk in $\sphere\setminus H_1$,
where $H_1\subset \inside$ is the solid torus induced by the loop $l$ and the meridian disk 
bounded by $m$. Then the complement of such a handlebody knot 
can be expressed as a solid torus with some tunnels in it.
For instance, the handlebody knot $\operatorname{HK}5_{1}$ has such a loop:
\begin{center}
\begin{figure}[ht]
\def\svgwidth{0.95\columnwidth}
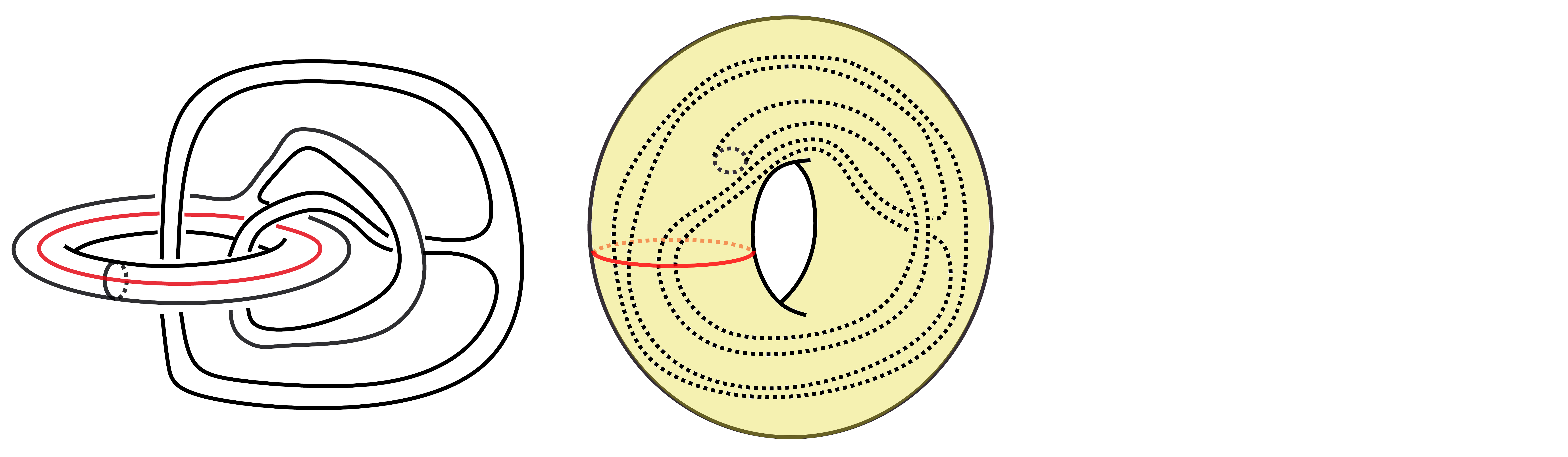
\caption{The complement of $\operatorname{HK}5_1$ as a solid torus with tunnels in it (middle); 
the tunnel expression for the complement of $+A_1$-twisted $\operatorname{HK}5_1$ (right).}
\label{HK5_1_tunnel_view}
\end{figure}
\end{center}
the annulus $A_1$ in Fig. \ref{HK5_1_A1} is bounded by $l$, and hence
$+A_{1}$-twisted $\operatorname{HK}5_1$ can be obtained by twisting the 
two tubes encircled by $l$; its complement as a solid torus with tunnels is 
depicted in Fig. \ref{HK5_1_tunnel_view} (right).

\begin{example}[\textbf{Unswappable prime scenes of genus $3$}]
To construct an unswappable prime scene of genus $3$, 
we start with a trivial scene of genus $1$ 
(Fig. \ref{unswappable_prime_scene_step1}, left).
Next, we grow a solid-torus-shaped tree 
such that the resulting object is the handlebody 
knot $\operatorname{HK}5_{1}$ (Fig. \ref{unswappable_prime_scene_step1}, right).
\begin{center}
\begin{figure}[ht]
\includegraphics[scale=.08]{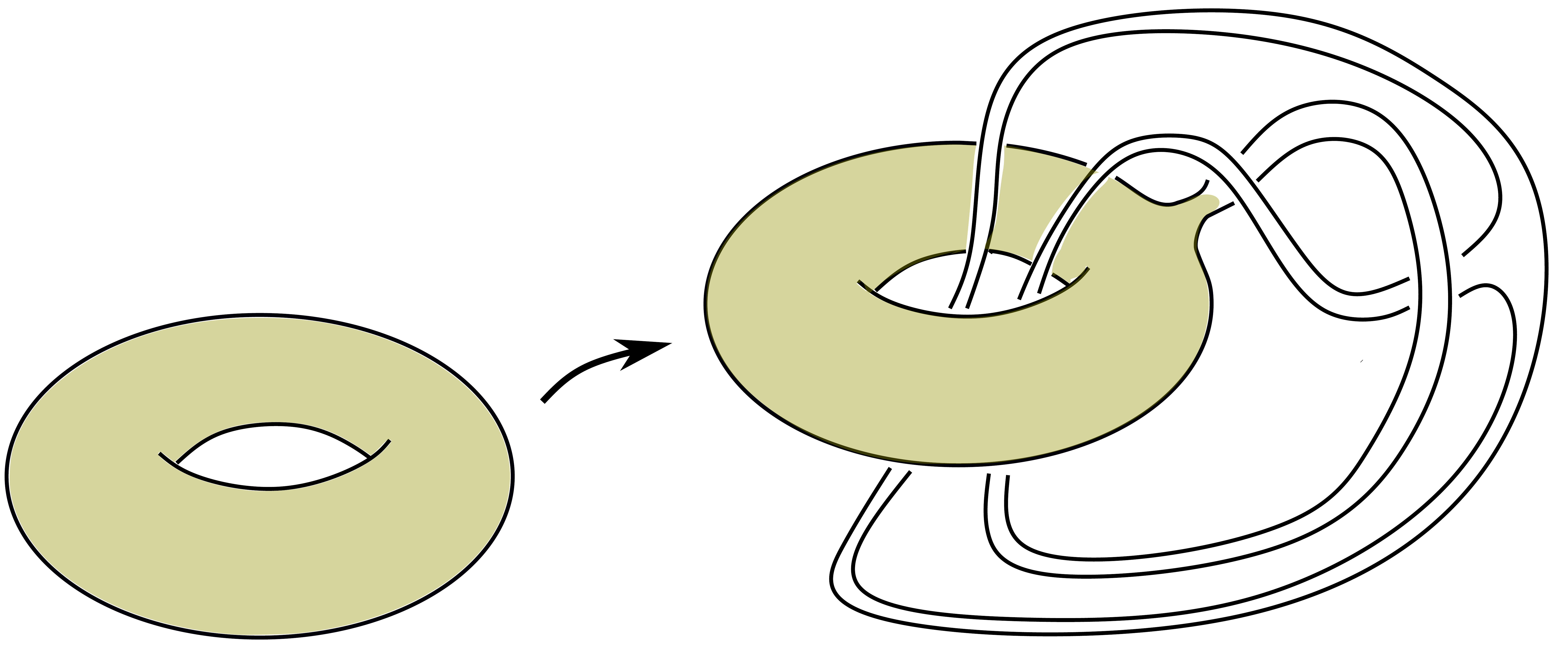}
\caption{$\operatorname{HK}5_1$ as a tree on a solid torus.}
\label{unswappable_prime_scene_step1}
\end{figure}
\end{center}

\noindent
Then, we dig a tunnel (Fig. \ref{unswappable_prime_scene_step2}) into the original 
solid torus in such a way that, 
without the tree, the resulting object
in $\sphere$ is the tunnel expression for the 
complement of $+A_{1}$-$\operatorname{HK}5_1$.
\begin{center}
\begin{figure}[ht]
\includegraphics[scale=.08]{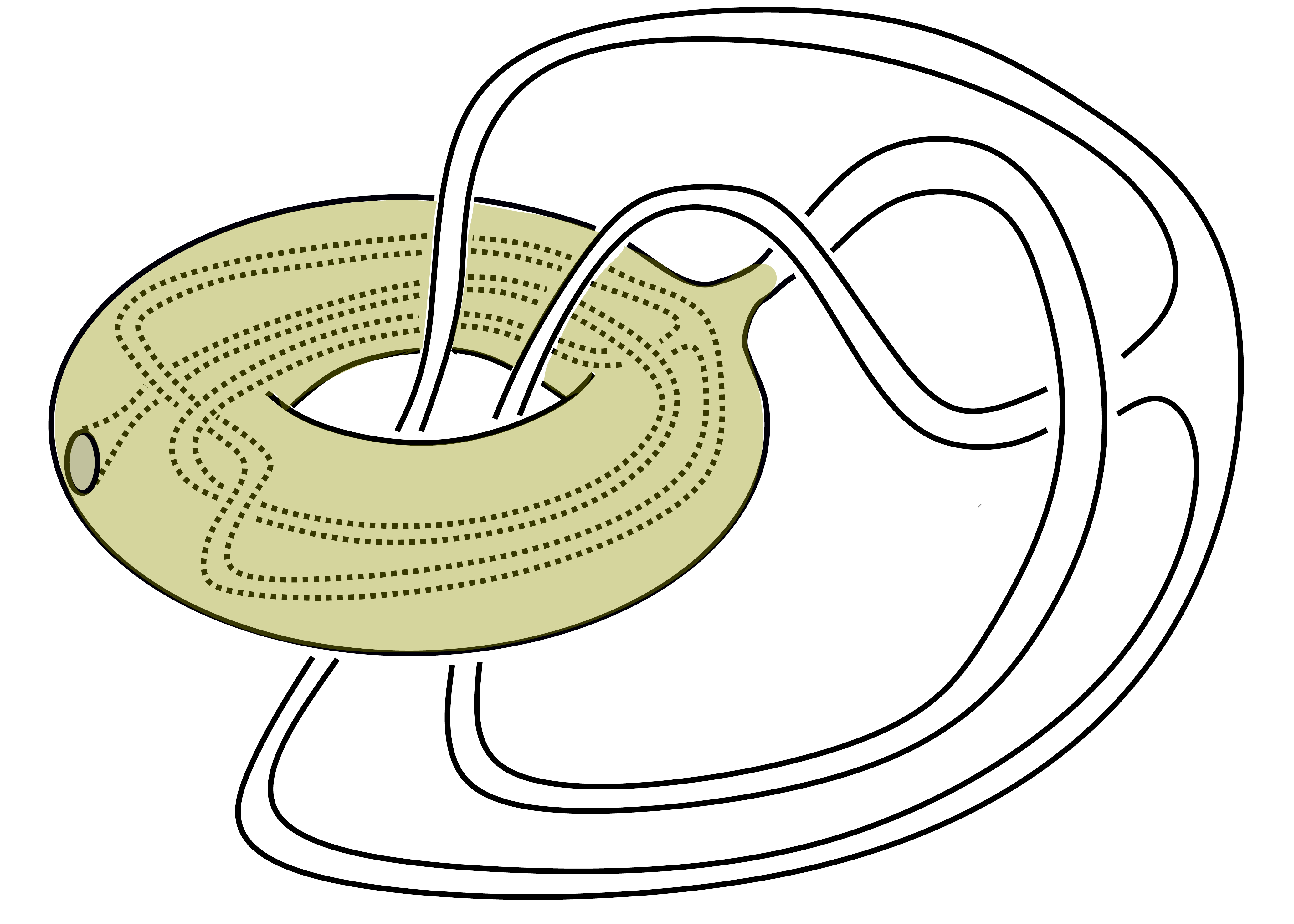}
\caption{Unswappable prime scene of genus $3$.}
\label{unswappable_prime_scene_step2}
\end{figure}
\end{center}

\noindent
Denote the resulting connected scene by $\SSS=(V,\Sigma,W)$.
From the construction, it is clear that both $V$ and $W$ are homeomorphic
to the connected sum of a solid torus and the complement of $\operatorname{HK}5_1$.
Hence, it is a symmetric scene. 

To see it is not swappable, we note that
any diffeomorphism between $V$ and $W$ sends the meridian of $V$ to 
the meridian of $W$ \cite[Corollary $3.6$]{Suz:75}. 
Removing these meridians from $W$ and $V$ in $\SSS$, one gets
a $\operatorname{HK}5_{1}$ and a swapped $+A_1$-$\operatorname{HK}5_1$,
respectively. In particular, if there is an equivalence between
$(V,\Sigma,W)$ and $(W,-\Sigma,V)$, then it induces an equivalence between
$\operatorname{HK}5_1$ and $+A_1$-$\operatorname{HK}5_1$, which contradicts
Proposition \ref{Inequivalent_HK_with_homeo_complement}.

Now, suppose $\SSS$ is not prime and there 
exists a decomposition $\SSS=\SSS_1\#\SSS_2$ with $\SSS_i$ a non-trivial scenes of genus $i$, 
$i=1,2$. Let $\Sbb^2\subset\sphere$ be the separating $2$-sphere of 
the decomposition. Then the disk $\Sbb^2\cap V$ separates $V$ into a solid torus
and the complement of $+A_1$-$\operatorname{HK}5_1$ and the disk $\Sbb^2\cap W$
separates $W$ into a solid torus and the complement of $\operatorname{HK}5_1$. 
So, $\SSS_2=(V_2,\Sigma_2,W_2)$ is such that both 
$V_2$ and $W_2$ are $\partial$-irreducible
manifolds, which contradicts to Fox's Theorem \cite[p.462 (2)]{Fox:48} 
(see \cite[Proposition $2.5$]{Suz:75}). Therefore, $\SSS$ is prime.

\end{example}

This construction works not only for $\operatorname{HK}5_1$ but also
for other handlebody knots admitting the loop $l$ described at the beginning 
of the subsection. Combining Motto's or Lee-Lee's results with the construction, 
we see there are infinitely many unswappable prime scenes of genus $3$; 
this proves Theorem \ref{Unswappable_scenes_of_genus_3}.

\subsection{Bi-knotted scenes}\label{subsec:biknotted}

The next examples concern bi-knotted scenes---namely, 
both ``inside" and ``outside" are not handlebodies.
To construct such examples, we introduce a satellite 
construction for handlebody knots.
\begin{definition}[\textbf{Transverse disks}]
\label{def:potentially_explosive_disk}
Given a handlebody knot $(\inside,\Sigma,\outside)$, that is, $\inside$ 
being a handlbody, a transverse 
disk $D$ of $(\inside,\Sigma,\outside)$ is a disk in
$\sphere$ such that 
$\mathring{D} \cap \inside$ is the union 
of $n$ disjoint disks $D_1,\cdots,D_n$
properly embedded in $\inside$, and 
$D\setminus \bigcup_{i=1}^{n}\mathring{D}_{i}$
an $n$-times punctured disk properly embedded in $\outside$.  
\end{definition}

\begin{definition}[\textbf{Neighborhood}]
A neighborhood of a transverse disk $D$ of 
a handlebody knot $\triplet$
is a closed $3$-ball $B$ in $\sphere$ 
containing $D$ in its interior 
such that $\mathfrak{N}_1=B\cap \outside$ 
is a tubular neighborhood of 
$D\setminus \bigcup_{i=1}^{n}\mathring{D}_{i}$ in $\outside$, 
and $B\cap \inside$ is the union of 
a tubular neighborhood 
$\mathfrak{N}_2$ of $\bigcup_{i=1}^{n}D_{i}$ and a 
tubular neighborhood $\mathfrak{N}_3$ of $\partial D$ 
in $\inside$ (see Fig.\ \ref{blow_up_construction1}). 
\end{definition}

\begin{center}
\begin{figure}[ht]
\includegraphics[scale=.09]{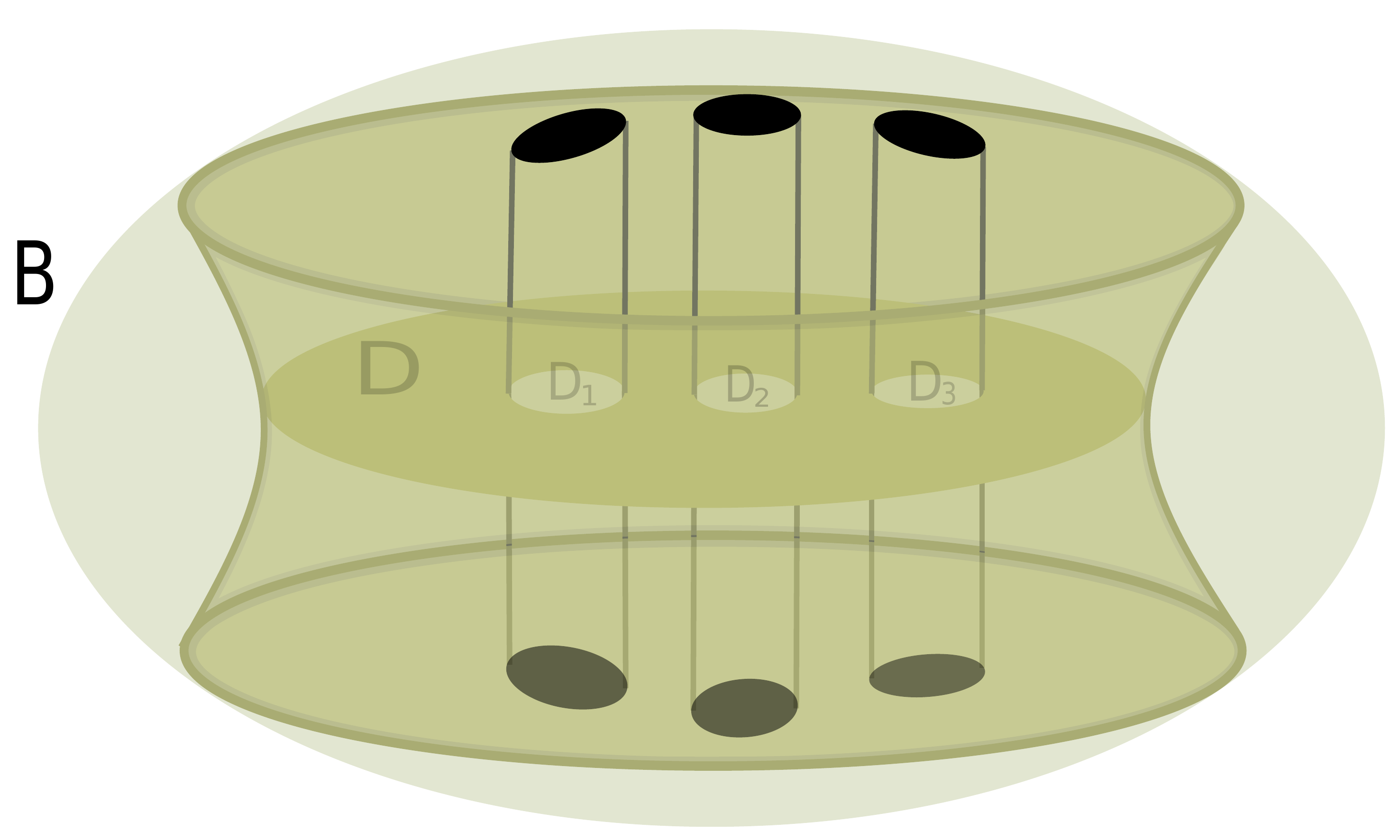}
\caption{A neighborhood $B$ of a transverse 
disk $D$ with $n=3$.}
\label{blow_up_construction1}
\end{figure}
\end{center}

\begin{construction}[The satellite construction.]
Identifying $B$ with the unit $3$-ball $B^{3}_u\subset \mathbb{R}^3$
via a homeomorphism  
\[
f:(B^3_u,D^2_\half\times I_,\bigcup_{i=1}^n D^2_i \times I)
\rightarrow (B,\mathfrak{N}_1\cup \mathfrak{N}_2,\mathfrak{N}_2),
\]
where $I=[-1,1]$, and $D^2_i$, $i=1,\cdots,n$,  
are disjoint disks in the interior of the disk 
$D_\half^2$ with radius $\frac{1}{2}$. 
By convention, we identify $D^2_\half\times I$ with 
the subspace  
\[C:=\{(x,y,z)\in B^3_u\mid x^2+y^2\leq \frac{1}{2}\}\subset B^3_u\]
via the homeomorphism
\begin{align*}
D^2_\half\times [-1,1]&\rightarrow C\\
(x,y,t)&\mapsto (x,y,t\sqrt{1-x^2-y^2}).
\end{align*}

Now, given an oriented knot 
$K:\Sbb^{1}\rightarrow \sphere$, we 
choose a basepoint $\ast \in \Sbb^{1}$ 
and observe that $K$ induces an embedding of arc
\[\Sbb^{1}\setminus \mathfrak{N}(\ast)\xrightarrow{K} \sphere\setminus \mathfrak{N}(K(\ast)),\]
where $\mathfrak{N}(\ast)$, $\mathfrak{N}(K(\ast))$ 
are open tubular neighborhoods of $\ast$ and $K(\ast)$ 
in $\Sbb^{1}$ and $\sphere$, respectively.
Identify $S^3\setminus \mathring{\mathfrak{N}}(K(\ast))$ with $B^3_u$ via 
a homeomorphism $h$, and 
$S^1\setminus \mathring{\mathfrak{N}}(\ast)$ with $I$ via a homeomorphism 
$l$ such that their composition with $K$ induce an embedding of the interval $I$: 
\[K_{h,l}:I\hookrightarrow B^3_u\]
going from the north pole of $B^3_u$ to the south pole (see Fig.\ \ref{blow_up_construction2}).   

Let $\mathfrak{N}(K_{h,l}(I))$ be a tubular 
neighborhood of the embedding arc $K_{h,l}(I)$ in $B_u^3$
such that $\mathfrak{N}(K_{h,l}(I))\cap \partial B^3_u=D^2_\half \times \partial I$. Then $K_{h,l}$ extends to an embedding
\[
\tilde{K}_{h,l}:D^2_\half \times I \rightarrow \mathfrak{N}(K_{h,l}(I))
\subset B^3_u
\] 
with $\tilde{K}_{h,l}\vert_{D^2_\half\times \partial I}$
being the inclusion, and for any given $(x,y)\in D^2_\half$, 
the arc $\tilde{K}_{h,l}((x,y),t), t\in I$, is parallel
\footnote{
namely, with zero linking number.
The linking number of the two strings is defined by
first gluing a copy of $B^3_u$ with opposite orientation 
to $B^3_u$ via the identity map
on $\partial B^3_u$, and complete the strings by vertical
lines through $(x,y)$ and $(0,0)$ in the copy of $B^3_u$.
} 
to $\tilde{K}_{h,l}((0,0),t):=K_{h,l}(t), t\in I$.

\begin{center}
\begin{figure}[ht]
\def\svgwidth{0.68\columnwidth}
\begingroup%
  \makeatletter%
  \providecommand\color[2][]{%
    \errmessage{(Inkscape) Color is used for the text in Inkscape, but the package 'color.sty' is not loaded}%
    \renewcommand\color[2][]{}%
  }%
  \providecommand\transparent[1]{%
    \errmessage{(Inkscape) Transparency is used (non-zero) for the text in Inkscape, but the package 'transparent.sty' is not loaded}%
    \renewcommand\transparent[1]{}%
  }%
  \providecommand\rotatebox[2]{#2}%
  \newcommand*\fsize{\dimexpr\f@size pt\relax}%
  \newcommand*\lineheight[1]{\fontsize{\fsize}{#1\fsize}\selectfont}%
  \ifx\svgwidth\undefined%
    \setlength{\unitlength}{1700.78740157bp}%
    \ifx\svgscale\undefined%
      \relax%
    \else%
      \setlength{\unitlength}{\unitlength * \real{\svgscale}}%
    \fi%
  \else%
    \setlength{\unitlength}{\svgwidth}%
  \fi%
  \global\let\svgwidth\undefined%
  \global\let\svgscale\undefined%
  \makeatother%
  \begin{picture}(1,0.36666667)%
    \lineheight{1}%
    \setlength\tabcolsep{0pt}%
    \put(0,0){\includegraphics[width=\unitlength,page=1]{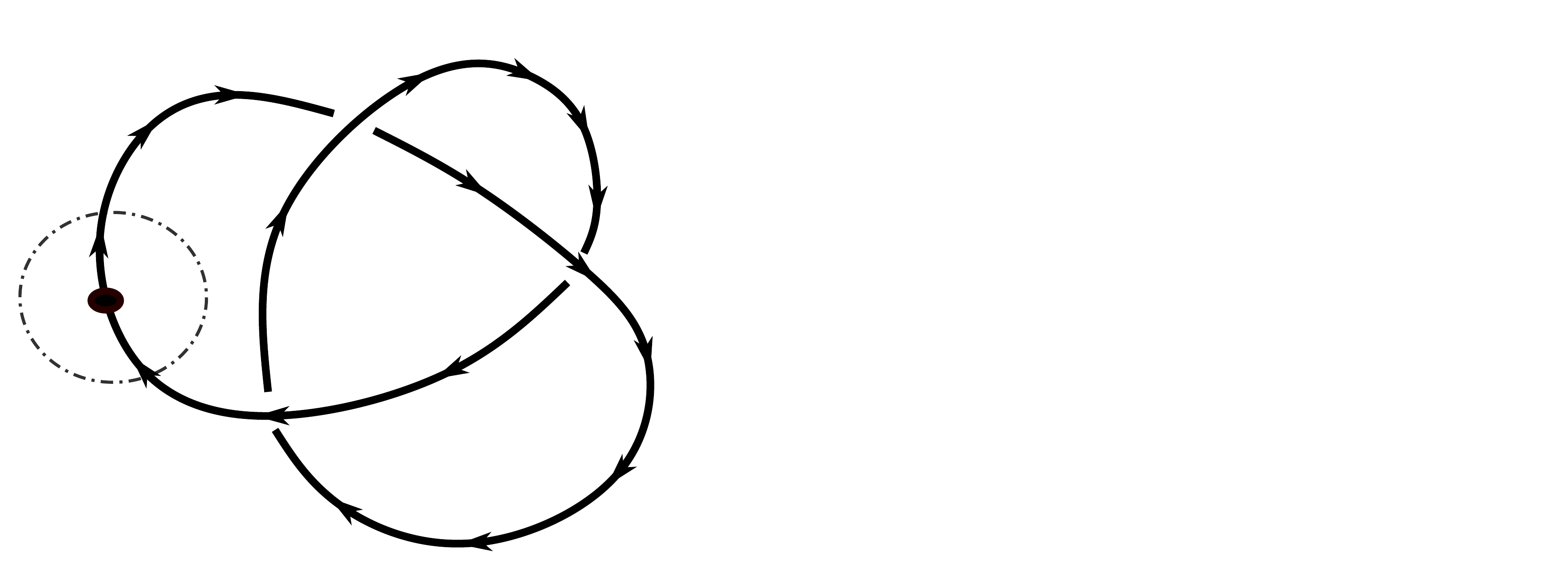}}%
    \put(0.07044694,0.18777671){\color[rgb]{0,0,0}\makebox(0,0)[lt]{\lineheight{1.25}\smash{\begin{tabular}[t]{l} $\ast$\end{tabular}}}}%
    \put(0,0){\includegraphics[width=\unitlength,page=2]{Blow_up_construction2.pdf}}%
    \put(0.00642381,0.33469709){\color[rgb]{0,0,0}\makebox(0,0)[lt]{\lineheight{1.25}\smash{\begin{tabular}[t]{l}{\tiny $\mathfrak{N}(K(\ast))$}\end{tabular}}}}%
    \put(0,0){\includegraphics[width=\unitlength,page=3]{Blow_up_construction2.pdf}}%
  \end{picture}%
\endgroup%

\caption{A proper arc in a ball.}
\label{blow_up_construction2}
\end{figure}
\end{center}

Consider the filtration given by 
\begin{equation}\label{BlowupatK} 
\bigcup_{i=1}^{n}D_{i}\times I\subset D^{2}_\half\times I 
\xhookrightarrow{\tilde{K}_{h,l}}B^{3}_u\xrightarrow{f}B.
\end{equation}
Then 
the new scene $\SSS^{D,K}=\tripletDK$ (Fig.\ \ref{blow_up_construction3})
obtained by performing the satellite construction
along $K$ w.r.t. $D$ on $\SSS$ is given by
\begin{align*}
\Sigma^{D,K}&:=f\circ \tilde{K}_{h,l}\big(
\partial D_\half^2\times I\cup_i \partial
D_i^2\times I
\big)
\bigcup 
\overline{\Sigma\setminus B}\\
\inside^{D,K}&:=f\Big(
\overline{B^3_u\setminus \tilde{K}_{h,l}(D_\half^2\times I)}
\cup_i 
\tilde{K}_{h,l}(D^2_i\times I)\Big) 
\bigcup \overline{\inside\setminus B}\\
\outside^{D,K}&:=f\circ \tilde{K}_{h,l} 
\big(
(D_\half^2\setminus \cup_i D_i^2)\times I
\big) 
\bigcup \overline{\outside\setminus B}.
\end{align*}

\begin{center}
\begin{figure}[ht]
\def\svgwidth{0.68\columnwidth}
\begingroup%
  \makeatletter%
  \providecommand\color[2][]{%
    \errmessage{(Inkscape) Color is used for the text in Inkscape, but the package 'color.sty' is not loaded}%
    \renewcommand\color[2][]{}%
  }%
  \providecommand\transparent[1]{%
    \errmessage{(Inkscape) Transparency is used (non-zero) for the text in Inkscape, but the package 'transparent.sty' is not loaded}%
    \renewcommand\transparent[1]{}%
  }%
  \providecommand\rotatebox[2]{#2}%
  \newcommand*\fsize{\dimexpr\f@size pt\relax}%
  \newcommand*\lineheight[1]{\fontsize{\fsize}{#1\fsize}\selectfont}%
  \ifx\svgwidth\undefined%
    \setlength{\unitlength}{3401.57480315bp}%
    \ifx\svgscale\undefined%
      \relax%
    \else%
      \setlength{\unitlength}{\unitlength * \real{\svgscale}}%
    \fi%
  \else%
    \setlength{\unitlength}{\svgwidth}%
  \fi%
  \global\let\svgwidth\undefined%
  \global\let\svgscale\undefined%
  \makeatother%
  \begin{picture}(1,0.33333333)%
    \lineheight{1}%
    \setlength\tabcolsep{0pt}%
    \put(0,0){\includegraphics[width=\unitlength,page=1]{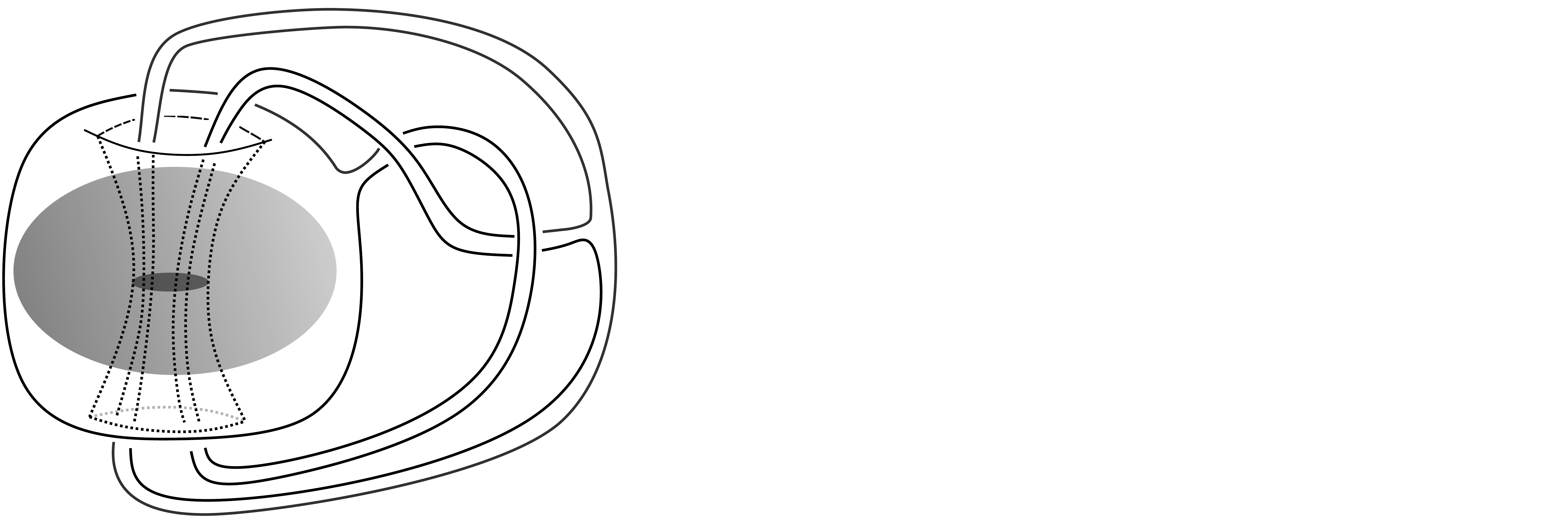}}%
    \put(0.0301235,0.1728737){\color[rgb]{0,0,0}\makebox(0,0)[lt]{\lineheight{1.25}\smash{\begin{tabular}[t]{l}{\footnotesize $B$}\end{tabular}}}}%
    \put(0.13341182,0.14230679){\color[rgb]{0,0,0}\makebox(0,0)[lt]{\lineheight{1.25}\smash{\begin{tabular}[t]{l}{\tiny $D$}\end{tabular}}}}%
    \put(0,0){\includegraphics[width=\unitlength,page=2]{Blow_up_construction3.pdf}}%
    \put(0.62147055,0.12266108){\color[rgb]{0,0,0}\makebox(0,0)[lt]{\lineheight{1.25}\smash{\begin{tabular}[t]{l}{\footnotesize$B$}\end{tabular}}}}%
    \put(0,0){\includegraphics[width=\unitlength,page=3]{Blow_up_construction3.pdf}}%
    \put(0.75872338,0.15537135){\color[rgb]{0,0,0}\makebox(0,0)[lt]{\lineheight{1.25}\smash{\begin{tabular}[t]{l}{\tiny $D$}\end{tabular}}}}%
    \put(0.44226717,0.02979817){\color[rgb]{0,0,0}\makebox(0,0)[lt]{\lineheight{1.25}\smash{\begin{tabular}[t]{l}$K:$\end{tabular}}}}%
    \put(0,0){\includegraphics[width=\unitlength,page=4]{Blow_up_construction3.pdf}}%
  \end{picture}%
\endgroup%

\caption{The new scene $\SSS^{D,K}$ (right) obtained by performing the satellite construction along $K$ w.r.t.\ $D$ on $\SSS$ (left).}
\label{blow_up_construction3}
\end{figure}
\end{center}
\end{construction}
 
To see that $\tripletDK$ is independent of all choices involved,
we first note that, in view of 
the tubular neighborhood theorem, it suffices
to consider another embedding
\[K':\Sbb^1\hookrightarrow \sphere\]
that has the same knot type as $K$, and another 
identifications
\begin{align*}
f':(B^3_u,D^2_\half\times I_,\bigcup_{i=1}^n D^2_i \times I)
&\rightarrow (B,\mathfrak{N}_1\cup \mathfrak{N}_2, \mathfrak{N}_2) \\
h':B^3_u  & \rightarrow \sphere\setminus \mathfrak{N}(K'(\ast))\\
l':I  & \rightarrow \Sbb^1\setminus  \mathfrak{N}(\ast). 
\end{align*}
As with $h,l,K$, the homeomorphisms
$h',l',K'$ together induce an embedding 
\[
\tilde{K}'_{h',l'}:
D^2_\half \times I \rightarrow \mathfrak{N}(K_{h,l}(I))
\subset B^3_u
\]
with $\tilde{K}'_{h',l'}\vert_{D^2_\half\times \partial I}$ being
the inclusion, and for any given $(x,y)\in D_\half^2$, 
the arc $\tilde{K}'_{h',l'}((x,y),t), t\in I$, is parallel
to $\tilde{K}'_{h',l'}((0,0),t):=K'_{h',l'}(t), t\in I$.
 
Since $K,K'$ have the same knot type,
there exists $g:B^3_u\rightarrow B^3_u$
with 
\[g\vert_{\mathfrak{N}(K_{h,l}(I))\cap\partial B^3_u}=\id\] 
such that the diagram commutes
\begin{equation}\label{diag:choices_in_K}
\begin{tikzpicture}
\node (D) at (0,1.5){$D^2_\half\times I$};
\node (Dprime) at (0,0) {$D^2_\half\times I$};
\node (nbhd) at (3,1.5) {$\mathfrak{N}(K_{h,l}(I))$};
\node (nbhdprime) at (3,0) {$\mathfrak{N}(K_{h',l'}'(I))$};
\node (Bu) at (4.5,1.5) {$B_u^3$};
\node (Buprime) at (4.5,0) {$B^3_u$};
\node   at (4.1,1.5) {$\subset$};
\node  at (4.1,0) {$\subset$};
\node (Commute) at (1.5,.8){$\circlearrowright$};
\draw[->] (D) to node[right]{$\vert$}(Dprime);
\draw[->] (nbhd) to node[right]{$g$}(nbhdprime);
\draw[->] (Bu) to node[right]{$g$} (Buprime);
\draw[->] (D) to node[above]{$\tilde{K}_{h,l}$}(nbhd);
\draw[->] (Dprime) to node[above]{$\tilde{K}'_{h',l'}$}(nbhdprime);
\end{tikzpicture}
\end{equation}    
In view of \eqref{diag:choices_in_K}, we may further assume 
that $g\vert_{\partial B^3_u}=\id$.  

Now observe that if there is an isotopy 
\[\Phi_t:
(B^3_u,D^2_\half\times I_,\cup_{i=1}^n D^2_i \times I)
\rightarrow 
(B,\mathfrak{N}_1\cup \mathfrak{N}_2, \mathfrak{N}_2) \]
between $f,f'$, then via the collar of $\partial B\subset \sphere$,   
the scenes obtained by 
$(f,h,l,K,\mathfrak{N}(K))$ 
and 
$(f',h',l',K',\mathfrak{N}(K'))$
are equivalent. Thus in general, it may be assumed that 
${f'}^{-1}f$ restricts to
the identity on 
$\partial B^3_u\setminus (\mathring{D}^2_\half\times I)\cup \partial D_\half^2\times I$.
Define $F$ to be 
\begin{equation}
F(x):= 
\begin{cases}
\tilde{K}_{h,l}{f'}^{-1}f\tilde{K}_{h,l}^{-1}(x), & x\in \mathfrak{N}(K_{h,l}(I))\\
g(x),& x\in \overline{B^3_u\setminus \mathfrak{N}(K_{h,l}(I))}.
\end{cases}
\end{equation}
$F$ is well-defined since $f^{'-1}f\vert_{\partial D^2_\half\times I}=\id$
and \eqref{diag:choices_in_K}.
Then we have a commutative diagram
\begin{equation} 
\begin{tikzpicture}
\node (D) at (0,1.5){$D^2_\half\times I$};
\node (Dprime) at (0,0) {$D^2_\half\times I$};
\node (nbhd) at (3,1.5) {$\mathfrak{N}(K_{h,l}(I))$};
\node (nbhdprime) at (3,0) {$\mathfrak{N}(K_{h',l'}'(I))$};
\node (Bu) at (4.5,1.5) {$B_u^3$};
\node (Buprime) at (4.5,0) {$B^3_u$};
\node (B) at (6,1.5) {$B$};
\node (Bprime) at (6,0) {$B$};

\node   at (4.1,1.5) {$\subset$};
\node  at (4.1,0) {$\subset$};
\draw[->] (D) to node[left]{\tiny ${f'}^{-1}f$}(Dprime);
\draw[->] (nbhd) to node[left]{\tiny $\tilde{K}_{h',l'}{f'}^{-1}f\tilde{K}_{h,l}^{-1}$}(nbhdprime);
\draw[->] (D) to node[above]{\tiny $\tilde{K}_{h,l}$}(nbhd);
\draw[->] (Dprime) to node[below]{\tiny $\tilde{K}'_{h',l'}$}(nbhdprime);
\draw[->] (Bu) to node[right]{\tiny $F$} (Buprime);
\draw[->] (Bu) to node[above]{\tiny $f$}(B);
\draw[->] (Buprime) to node[below]{\tiny $f'$}(Bprime); 
\end{tikzpicture}
\end{equation}
Since $f^{'-1}Ff$ restricts to the identity on $\partial B$, 
together with the identity map on $\sphere\setminus \mathring{B}$,
it induces an equivalence between scenes obtained by 
$(f,h,l,K,\mathfrak{N}(K))$ 
and 
$(f',h',l',K',\mathfrak{N}(K'))$.
  

We note that by the construction of $\tripletDK$, 
there is a natural homeomorphism 
$\iota:\outside\rightarrow \outside^{D,K}$ given by
\begin{align}\label{homeo_F_to_Fdk}
\iota_{\vert_{\outside\setminus B}}&=\id
\nonumber\\ 
{\iota_{K} }_{\vert_{\outside\cap B }}&:
\outside\cap B \xrightarrow{f^{-1}} (D \setminus \bigcup_{i=1}^{n}\mathring{D}_{i} )\times I 
\xrightarrow{f\circ\tilde{K}_{h,l}}
\outside^{D,K}\cap B .
\end{align}
On the other hand, the topology of $\inside^{D,K}$ might change.
Whether $\SSS^{D,K}$ and $\SSS$ are equivalent
depends solely on whether $\partial D\subset\inside$ is essential. 
\begin{lemma}\label{lem:explosive_equivalent}
Given a handlebody knot $\SSS=(\inside,\Sigma,\outside)$ with
a transverse disk $D$, 
if $\partial D$ bounds a disk in $\inside$, then $\SSS^{D,K}$
is equivalent to $\SSS$, for any knot $K$. 
\end{lemma}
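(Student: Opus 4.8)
The plan is to construct an ambient isotopy of $\sphere$ carrying $\inside^{D,K}$ onto $\inside$; by Definition~\ref{def:equiv} this establishes the equivalence $\SSS^{D,K}\simeq\SSS$. The basic observation is that $\SSS$ and $\SSS^{D,K}$ differ only inside the ball $B$ — the blow-up merely replaces the filtration \eqref{Ballneighborhood} by \eqref{BlowupatK} — and that the homeomorphism $\iota_{K}$ of \eqref{homeo_F_to_Fdk} is the identity off $B$. Hence it is enough to produce an ambient isotopy of $\sphere$ supported in a compact neighbourhood $N$ of $B$ that transforms the partition of $N$ induced by $\SSS^{D,K}$ into the one induced by $\SSS$.

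First I would use the hypothesis to set up the relevant $2$-sphere. Let $D'\subset\inside$ be a properly embedded disk with $\partial D'=\partial D$; after a small isotopy I may take $D'$ to meet $D$ only along $\partial D$, to be disjoint from $\coprod_{i=1}^{n}D_{i}$, and to meet $B$ in a standard collar of $\partial D$. Capping with $D'$ the cross-sectional disk of the blow-up tube then yields a smoothly embedded $2$-sphere: in $\SSS$ this is $j(D^{2}\times\{1/2\})\cup D'$ (a copy of $D\cup D'$), while in $\SSS^{D,K}$ it is the possibly knotted $\widehat{S}:=\tilde{K}(D^{2}\times\{1/2\})\cup D'$. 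Take $N$ to be a regular neighbourhood of $B\cup D'$, so that $N\supset B$ and $\SSS$, $\SSS^{D,K}$ agree on and near $\sphere\setminus N$.

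The core of the argument is then to straighten $\widehat{S}$ to $D\cup D'$ by an ambient isotopy of $\sphere$ supported in $N$ and fixed on $\partial N$. Both $D\cup D'$ and $\widehat{S}$ are embedded $2$-spheres in $\sphere$, hence unknotted by the smooth $3$-dimensional Sch\"onflies theorem \cite{Maz:61}; since the two spheres coincide off $B$, the required isotopy needs only to move the knotted disk $\tilde{K}(D^{2}\times\{1/2\})$ to $D^{2}\times\{1/2\}$ rel its boundary $\partial D$ — equivalently, to push across the $3$-ball bounded by their union — and this can be arranged rel $\partial N$. Uniqueness of tubular neighbourhoods then allows this isotopy to be enhanced so as to carry the full blow-up data: the two-sided collar $\tilde{K}(D^{2}\times\boundedclosedinterval)$ of that disk, whose feet $D^{2}\times\partial\boundedclosedinterval$ already occupy the standard position $j(D^{2}\times\partial\boundedclosedinterval)$ on $\partial B$, is carried to $j(D^{2}\times\boundedclosedinterval)$, and, the collar being a product, the void subcylinders $D_{i}\times\boundedclosedinterval$ are dragged to their standard positions. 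This identifies \eqref{BlowupatK} with \eqref{Ballneighborhood}; extending by the identity off $N$ gives the desired ambient isotopy of $\sphere$.

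I expect the main obstacle to be precisely this last enhancement: one must choose the sphere-straightening isotopy so that (i) it is supported in $N$ and fixes $\partial N$, so that it glues to the unchanged exterior, and (ii) it carries not merely the sphere $\widehat{S}$ but the entire product structure of the blow-up tube, together with its void subcylinders, onto the standard model. This is where the smooth Sch\"onflies theorem and the uniqueness of tubular neighbourhoods are genuinely used, and it makes transparent why the conclusion requires $\partial D$ to bound a disk in $\inside$: the disk $D'$ is exactly what completes the a priori knotted blow-up tube to an unknotted $2$-sphere, which is what renders the straightening possible. The remaining points — that $\widehat{S}$ is embedded and that $N$ can be chosen with the stated properties — are routine and I would dispatch them with a short transversality and regular-neighbourhood argument.
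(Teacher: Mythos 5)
Your overall strategy---localize the change to a neighbourhood $N$ of $B\cup D'$ and straighten there---is reasonable, but the step that is supposed to do the actual work fails. The isotopy you propose, pushing the cross-sectional disk $\tilde{K}(D^{2}\times\{1/2\})$ onto $j(D^{2}\times\{1/2\})$ across the $3$-ball bounded by their union, is necessarily supported inside $B$: both disks lie in $B$, so of the two complementary balls of the $2$-sphere they form exactly one is contained in $B$, while the other contains $\sphere\setminus N$ and is therefore unavailable if you want to fix $\partial N$. An isotopy supported in $B$ makes no use of $D'$ at all; if your argument were correct it would prove the lemma without the hypothesis that $\partial D$ bounds a disk in $\inside$, contradicting the next lemma of the paper (blowing up a \emph{truly} knottable disk by a nontrivial $K$ changes the scene). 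Concretely, the error is in the ``enhancement'': straightening one cross-section does not straighten the tube. After your push, $\tilde{K}(D^{2}\times I)$ has become a two-sided collar of $D$ with standard feet on $\partial B$, but its core is still a knotted arc in $B$ rel $\partial B$ (knotted, say, between the levels $0$ and $1/2$). Uniqueness of tubular neighbourhoods of $D$ only yields an isotopy of collars that shrinks toward $D$ and does not respect the feet $D^{2}\times\partial I$; it cannot carry this knotted collar to $j(D^{2}\times I)$ rel $\partial N$. (A smaller slip: $\partial\bigl(\tilde{K}(D^{2}\times\{1/2\})\bigr)=\tilde{K}(\partial D^{2}\times\{1/2\})$ is not $\partial D=\partial D'$, so $\widehat{S}$ as written is not a closed surface; this is repairable with an interpolating annulus, but it is symptomatic of the fact that your construction never genuinely engages $D'$.)

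The role of $D'$ is not to ``complete the tube to an unknotted sphere''---every embedded $2$-sphere in $\sphere$ is unknotted---but to provide the room through which the whole knotted tube can be swung. The paper's proof makes this precise without any local pushing: remove a tubular neighbourhood $N(D')$ from $\inside$; its complement is a $3$-ball $\tilde{B}$ containing $\outside$, with $D$ properly embedded in it, and the blow-up is exactly the re-embedding $\outside\subset\tilde{B}\xrightarrow{\mathbf{K}}\sphere$, where $\mathbf{K}$ equals $\tilde{K}$ on $N(D)$ and the identity elsewhere. Since any two orientation-preserving embeddings of a $3$-ball in $\sphere$ are ambient isotopic, $\mathbf{K}$ is ambient isotopic to the inclusion, and the scenes agree. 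If you wish to keep your ``straighten inside $N$'' framing, you must replace your two local steps by an argument that isotopes the \emph{entire} proper embedding $\tilde{K}:D^{2}\times I\to B$ to $j$ using the region $N(D')$ (a light-bulb-trick argument); that is precisely where the hypothesis enters.
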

\begin{proof}
Suppose $\partial D$ bounds a proper disk $D^{\prime}$ in $\inside$. 
Let $\mathfrak{N}(D^{\prime})$ 
be a tubular neighborhood of $D^{\prime}$ in $\inside$.
Then the closure complement of $\mathfrak{N}(D^{\prime})$ 
is a $3$-ball 
$\tilde{B}\subset \sphere$ containing $F$ with $D$ properly embedded in $\tilde{B}$. 
Take a tubular neighborhood $\mathfrak{N}(D)$ of $D$ in $\tilde{B}$ such that $\mathfrak{N}(D)\cap F$ is a tubular neighborhood of 
$D\setminus \bigcup_{i=1}^{n}\mathring{D}_{i}$ in $F$, and
denote by $B_1,B_2$ components of
$\tilde{B}\setminus \mathring{\mathfrak{N}}(D)$.
Note that, properly choosing
a tubular neighborhood $\mathfrak{N}(\partial D)$
of $\partial D$ in $\mathfrak{N}(D')$ gives us a neighborhood 
$B:=\mathfrak{N}(D)\cup\mathfrak{N}(\partial D)$ 
of $D$ with which we can perform 
the satellite construction \eqref{BlowupatK} (see Fig.\ \ref{Not_truly_knottable_disk}). 

Let $(f,h,l,\mathfrak{N}(K))$
be a choice of homeomorphisms and tubular neighborhood of an oriented knot $K$.
Then performing
the satellite construction along $K$ w.r.t.\
$D$
amounts to reembedding $F$ via the composition 
\begin{equation}\label{Reembed_the_outside}
\outside\subset \tilde{B}=B_1\cup\mathfrak{N}(D)\cup B_2\xrightarrow{\mathbf{K}}\sphere, 
\end{equation}
where $\mathbf{K}$ is the identity on $B_1\cup B_2$, and is
the composition
\[\mathfrak{N}(D)\xrightarrow{f^{-1}}D^2_\half\times I \xrightarrow{\tilde{K}_{h,l}}
B^3_u\xrightarrow{f}\sphere\]
on $\mathfrak{N}(D)$ (Fig.\ \ref{BlowupatK}).
Since any two embeddings of the $3$-ball $\tilde{B}$ 
in $\sphere$ are ambient isotopic, 
$F^{D,K}$ is isotopic to $F$ in $\sphere$. 
\end{proof}

\begin{center}
\begin{figure}[ht]
\includegraphics[scale=.13]{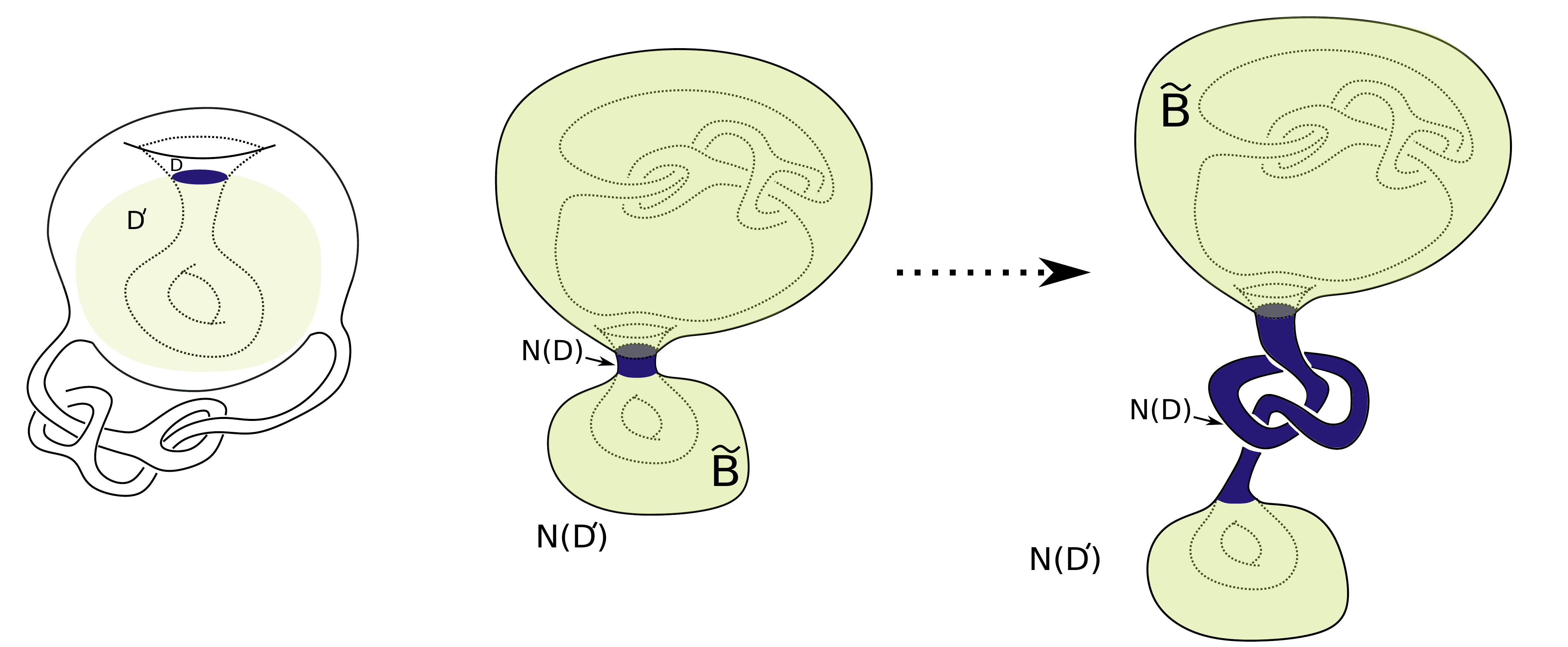}
\caption{Inessential transverse disk.} 
\label{Not_truly_knottable_disk} 
\end{figure}
\end{center}

Fig.\ \ref{Not_truly_knottable_disk} illustrates the situation 
in Lemma \ref{lem:explosive_equivalent}. 
Starting with a solid sphere with an unknotted solid torus 
removed from the inside,
then drilling a cylindrical hole connecting the north pole of the sphere with the northest point of the internal cavity and thus connecting 
the cavity with the outside, we get a solid set 
which is topologically equivalent to a solid torus. 
Attaching some knotted handles to the solids, we further obtain 
a handlebody knot in $\sphere$.
Now, take a cross-section of the
gallery entering the cave as our transverse disk $D$, 
whose boundary clearly bounds a disk $D^\prime$ in the solid. 
Then, performing the satellite construction along a knot 
w.r.t.\ the disk $D$ is equivalent to substituting the straight gallery by
a knotted one. However, it does not change the embedding,
since the knotted gallery can be easily untangled by isotoping
$D^\prime_{0}$, the bottom disk in $\mathfrak{N}(D^\prime)\simeq D^\prime\times \closedintervalzeroone$.

\begin{definition}[\textbf{Essential Transverse Disk}]\label{def:truly_explosive_disk}
A transverse disk $D$ of a handlebody knot 
$\SSS=(\inside,\Sigma,\outside)$ is 
essential if $\partial D$ bounds no disk in $\inside$. 
\end{definition}

\begin{lemma}
Let $D$ be an essential transverse disk of a handlebody knot
$\SSS=(\inside,\Sigma,\outside)$ and $K$ a non-trivial oriented knot. 
Then $\SSS^{D,K}$ and $\SSS$ are
inequivalent.
\end{lemma}
\begin{proof}
Let $B$ be a neighborhood of $D$.  
Then $\inside \cap \partial B$ is a collection of disks and an annulus $A$.
Note that performing the satellite construction along $K$ 
w.r.t.\ the disk $D$ does not change the boundary of $B$,
but $B\cap \inside^{D,K}$ is now a union of some tubes and 
the complement $\mathfrak{C}(K)$ of an open tubular neighborhood of $K$.

Choose a base point $\ast\in A$, and apply van Kampen's Theorem
to the decomposition 
\[\inside^{D,K}= 
\mathfrak{C}(K) 
\cup \big(\inside^{D,K}\setminus  \mathring{\mathfrak{C}}(K) \big).
\]  
Then the following 
pushout diagram computes the fundamental group of $\inside^{D,K}$. 
\begin{center}
\begin{tikzpicture}
\node(Lm) at (0,0){$\pi_{1}(A,\ast)$};
\node(Rm) at (8,0){$\pi_{1}(\inside^{D.K},\ast)$};  
\node(Ml) at (4,-1) {$\pi_{1}(\inside^{D,K}\setminus \mathring{\mathfrak{C}}(K),\ast)$};
\node(Mu) at (4,1){$\pi_{1}(\mathfrak{C}(K),\ast)$};
\path[->, font=\scriptsize,>=angle 90] 

(Lm) edge node [above]{$\iota$}(Mu)  
(Lm) edge node [below]{$\vartheta$}(Ml) 
(Mu) edge (Rm)
(Ml) edge (Rm);
\end{tikzpicture}
\end{center}
where the homomorphisms $\iota,\vartheta$ 
are induced by the inclusions.  
 
The assumption that $\partial D$ does not bound a disk in $\inside$ 
implies the homomorphism $\vartheta$ is injective 
in view of the identification 
\[
\inside^{D,K}\setminus \mathring{\mathfrak{C}}(K)\simeq 
\inside\setminus \mathring{\mathfrak{N}}(D).\] 
On the other hand, $\iota$ sends the generator to 
the meridian of $K$, and hence is also injective.

The injectivity of $\vartheta$ and $\iota$ implies 
the other two homomorphisms are injective. 
In particular, $\pi_{1}(\inside^{D,K},\ast)$ contains 
a non-free group $\pi_{1}(\mathfrak{C}(K),\ast)$ 
since $K$ is non-trivial. 
In view of the Nielsen--Schreier theorem \cite{Sch:27}, 
$\inside^{D,K}$ cannot be a handlebody
and therefore not diffeomorphic to $\inside$.  

\end{proof}

From now on, we restrict our attention to  
handledbody knots of genus $2$.

\begin{definition}[\textbf{Type I and II}]\label{Type_of_knottable_disks}
Let $\SSS=(\inside,\Sigma,\outside)$ 
be a handlebody knot of genus $2$. 
An essential transverse disk $D$ of $\SSS$
is said to be of type \textit{I} 
if $\partial D$ separates $\Sigma$, and of type {\it II} otherwise. 
\end{definition}

\begin{lemma}[\textbf{Primeness}]
Let $\SSS=(\inside,\Sigma,\outside)$ 
be a handlebody knot of genus $2$ admitting 
an essential \knottable disk $D$ of type $\textit{I}$. 
Then $\SSS$ is not prime, and furthermore if 
$\SSS=\SSS_{1}\sharp \SSS_{2}$
is the prime decomposition of $\SSS$, 
then either $\SSS_1$ or $\SSS_2$ is trivial,
where $\SSS_i$ is a connected scene of genus one, for $i=1,2$.  
\end{lemma}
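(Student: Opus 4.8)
The plan is to put the knottable disk into a normal form, use it to split the complement $\outside$ as a boundary connected sum, and then translate that into a connected-sum decomposition of the scene in which one factor is forced to be trivial.

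\smallskip
\noindent\emph{Step 1 (normalizing $D$).} Since $D$ is truly \knottable, $\partial D$ does not bound a disk in $\inside$; as $\partial D\subset\Sigma=\partial\inside$, it is not null-homotopic in $\inside$, hence, by Dehn's lemma, not null-homotopic in $\Sigma$, so $\partial D$ is essential on $\Sigma$. Being separating (type~\textit{I}) and essential on the genus-$2$ surface $\Sigma$, it cuts $\Sigma$ into two once-punctured tori $P_{1},P_{2}$ with $\partial P_{1}=\partial P_{2}=\partial D$. I would then isotope $D$ so that $\mathring D\cap\inside=\emptyset$, i.e.\ $D$ becomes a properly embedded essential disk in $\outside$: among potentially \knottable disks isotopic to $D$ pick one minimizing the number $n$ of components $D_{i}$ of $\mathring D\cap\inside$; each $\partial D_{i}$ is disjoint from $\partial D$, hence lies in $\mathring P_{1}$ or $\mathring P_{2}$; if some $\partial D_{i}$ is inessential on $\Sigma$, an innermost such circle bounds a disk on $\Sigma$ missing every $\partial D_{k}$, which (using irreducibility of $\inside$) lets one push $D_{i}$ off $\inside$, contradicting minimality; if some $\partial D_{i}$ is essential on $\Sigma$, it is essential in the once-punctured torus $P_{j}$ containing it, and surgering $P_{j}$ along $D_{i}$ (cut along $\partial D_{i}$, cap the new circles with two parallel copies of $D_{i}$ pushed into $\inside$) yields an embedded disk in $\inside$ with boundary $\partial D$, contradicting truly knottability. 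Hence $n=0$. By the same surgery trick one also records that $P_{1}$ and $P_{2}$ are incompressible in $\inside$.

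\smallskip
\noindent\emph{Step 2 (boundary sum of the complement and the scene decomposition).} Because $\partial D$ separates $\Sigma=\partial\outside$ and $H_{2}(\outside;\Z/2)=0$ (Alexander duality, $\outside$ being a handlebody complement), $D$ separates $\outside$; write $\outside=W_{1}\cup_{D}W_{2}$ with $W_{i}\cap\Sigma=P_{i}$, a boundary connected sum. Thus $\Sigma$ compresses into $\outside$ along a \emph{separating} disk, and, combined with the (automatic) compressibility of $\Sigma$ in the handlebody $\inside$, Fox's theorem on complementary domains \cite{Fox:48} together with Suzuki's $\partial$-prime decomposition \cite[Theorem~$3.4$]{Suz:75} (already used in the proof of Theorem~\ref{teo:complete_invariant}) produces a $2$-sphere in $\sphere$ realizing a connected-sum decomposition $\SSS=\SSS_{1}\#\SSS_{2}$; since the separating circle cuts the genus-$2$ surface $\Sigma$ into two once-punctured tori, the factors $\SSS_{i}$ have genus one, and $\inside_{i}$ is a solid torus (a genus-$2$ handlebody cut along a separating essential disk). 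In particular $\SSS$ is not prime.

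\smallskip
\noindent\emph{Step 3 (one factor is trivial).} It remains to see that one of $\outside_{1}=W_{1}$, $\outside_{2}=W_{2}$ is a solid torus, for then that $\SSS_{i}$ has both $\inside_{i}$ and $\outside_{i}$ handlebodies and is trivial. Each $W_{i}$ is irreducible with torus boundary $T_{i}:=P_{i}\cup D$, hence is a solid torus or a nontrivial knot exterior, and each $T_{i}$ is an embedded torus in $\sphere$, so by the Alexander–Rolfsen theorem \cite[p.~$107$]{Rol:03} it bounds a solid torus on one side; the two sides of $T_{1}$ are $W_{1}$ and $Y:=\inside\cup_{P_{2}}W_{2}$, those of $T_{2}$ are $W_{2}$ and $X:=\inside\cup_{P_{1}}W_{1}$. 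If neither $W_{1}$ nor $W_{2}$ were a solid torus, then both $X$ and $Y$ would be solid tori; but a van Kampen computation $\pi_{1}(Y)=\pi_{1}(\inside)\ast_{\pi_{1}(P_{2})}\pi_{1}(W_{2})$, using that $P_{2}$ is incompressible in $\inside$ (Step~1) and that filling the nontrivial knot exterior $W_{2}$ along any slope cannot give $\pi_{1}=\Z$ (Property~P type argument), shows $\pi_{1}(Y)\neq\Z$, a contradiction. Hence, say, $W_{2}$ is a solid torus, $\SSS_{2}$ is trivial, and the lemma follows.

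\smallskip
\noindent\emph{Main obstacle.} The serious point is Step~2: $\partial D$ itself is \emph{not} the connected-sum curve (it does not compress $\inside$), so one must genuinely pass from "$\outside$ is $\partial$-reducible along a separating disk" to an honest connected-sum sphere of the scene — this is exactly where the Fox/Suzuki machinery, rather than a naive capping-off of $D$, is needed. The secondary difficulty is Step~3, namely ruling out that both $W_{1},W_{2}$ are nontrivial knot exteriors, which is what forces one of the two genus-one factors to be trivial; Step~1 is routine (innermost-disk and surface-surgery arguments).
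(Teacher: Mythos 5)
Your Steps 1 and 2 essentially track the paper: the paper also first shows that $D$ may be assumed properly embedded in $\outside$ (it argues directly that each disk of $\mathring{D}\cap\inside$ would be a meridian or separating disk of $\inside$, forcing $\partial D$ to meet its boundary and contradicting embeddedness, rather than using your isotopy/innermost-disk normalization --- either works), and it then cites Suzuki and Tsukui for the passage from ``$\Sigma$ compresses in $\outside$ along a separating disk'' to non-primeness, exactly the deferral you make in Step 2.

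The genuine gap is Step 3. The pivotal claim $\pi_1(Y)\neq\Z$ is not supported by the argument you give: $Y=\inside\cup_{P_2}W_2$ is obtained by gluing along the once-punctured torus $P_2$, which is \emph{not} $\pi_1$-injective in $W_2$ --- the map $\pi_1(P_2)\to\pi_1(W_2)$ factors through $\pi_1(T_2)\cong\Z^2$ and kills the commutator of the two generators of $\pi_1(P_2)$ --- so the pushout is not an amalgamated free product in the sense needed for a normal-form argument, and the gluing is not a Dehn filling of $W_2$, so no Property P/R-type statement applies. (The claim is in fact true, but proving it requires a genuine incompressible-surface analysis of $P_2$ inside the putative solid torus $Y$, including the awkward case of compressing disks for $P_2$ in $W_2$ with boundary parallel to $\partial D$.) A second, smaller gap: even granting that one of $W_1,W_2$ is a solid torus, you write $\outside_i=W_i$; but the prime-decomposition sphere meets $\outside$ in a disk $D_s$ whose boundary \emph{does} bound a disk in $\inside$, hence is not isotopic to $\partial D$, so identifying the $W_i$ with the $\outside_i$ requires invoking uniqueness of the $\partial$-prime decomposition of $\outside$, which you do not do. The paper avoids all of this with a short algebraic argument that you should adopt in place of Step 3: if both factors were nontrivial knot exteriors, the kernel of $\pi_1(\Sigma)\to\pi_1(\outside)$ would be the normal closure of $[\partial D_s]$; since $[\partial D]$ lies in that kernel and $[\partial D_s]$ dies in $\pi_1(\inside)$, so does $[\partial D]$, whence by the loop theorem $\partial D$ bounds a disk in $\inside$, contradicting truly knottability.
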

 
\nada{
In other words, 
$\SSS$ is reducible (in the sense of \cite{IshKisMorSuz:12})
and has at least one trivial factor. }

\begin{proof}
Observe first that, since 
$\partial D$ separates $\Sigma$ and does not bound any disks in $\inside$, 
given a system of meridian disks  
of $\inside$, namely two disjoint disks $M_1,M_2$
such that $\inside \setminus \mathring{\mathfrak{N}}(M_1\cup M_2)$
is a $3$-ball (compare with \cite[p.864]{Och:91}, \cite[$2.16$]{Suz:75}),
then $\partial D$ intersects both $\partial M_1$ and $\partial M_2$.

Now let $\mathring{D}\cap \outside$ be the union 
of disjoint disks $D_1,\cdots, D_n$. If $D_{i}$ is essential in $\inside$, 
it induces a system of meridian disks of $\inside$, and
hence $\partial D\cap \partial D_i\neq \emptyset$, contradicting 
the fact that 
$D$ is an embedding in $\sphere$. On the other hand,
if $D_i$ is inessential, we can isotope $D$ away from $D_i$
with other components in $\inside \cap \mathring{D}$ intact.
Therefore, it may be assumed that 
$\mathring{D}\cap \outside=\emptyset$. Thus $D$
is properly embedded in $\outside$, and therefore 
$\SSS$ is not prime
by \cite[Prop. $2.15$]{Suz:75} and \cite[Theorem $1$]{Tsu:75}.

Let $\SSS=\SSS_{1}\sharp \SSS_{2}$ be the prime decomposition of $\SSS$, and 
suppose both $\SSS_{1}$ and $\SSS_{2}$ are non-trivial. 
Denote by $\outside_{1}$ and $\outside_{2}$ the outsides of $\SSS_1,\SSS_2$,
respectively---that is, the closures of the complement 
of the two non-trivial knots, and by 
$D_{s}$ the intersection of $\outside$ and 
a separating sphere $S$ of the decomposition $\SSS_{1}\sharp\SSS_{2}$.
Then the kernel of the induced homomorphism 
\begin{equation}\label{Lemma_separating_D}
\pi_{1}(\Sigma,\ast)\rightarrow \pi_{1}(\outside,\ast)\qquad \ast\in\partial D_s\subset\Sigma
\end{equation}
is the normal closure of the homotopy class $[\partial D_{s}]\in\pi_1(\Sigma,\ast)$.  
Since $D$ is properly embedded in $\outside$, 
$[\partial D]$ is also in the kernel of  
\eqref{Lemma_separating_D} and hence is 
a product of conjugates of $[\partial D_{s}]$.

On the other hand, $[\partial D_{s}]$ is also in the kernel of 
\begin{equation}\label{Kernel_Sigam_to_inside}
\pi_{1}(\Sigma,\ast)\rightarrow \pi_{1}(\inside,\ast),
\end{equation}  
and hence $[\partial D]$ is in the kernel of \eqref{Kernel_Sigam_to_inside} as well; this, however, contradicts the essentiality of $D$ as a transverse disk.
Thus one of $\SSS_1, \SSS_2$ has to be trivial.
\end{proof}

The figure below is an essential transverse disk 
of \textit{type I} in a trivial scene.
Performing the satellite construction along a trefoil knot 
w.r.t.\ the disk produces the complement of the handlebody knot 
$\operatorname{HK}5_4$ ($5_4$ in Ishii et al.'s handlebody knot table).
\begin{center}
\begin{figure}[ht]
\includegraphics[scale=.13]{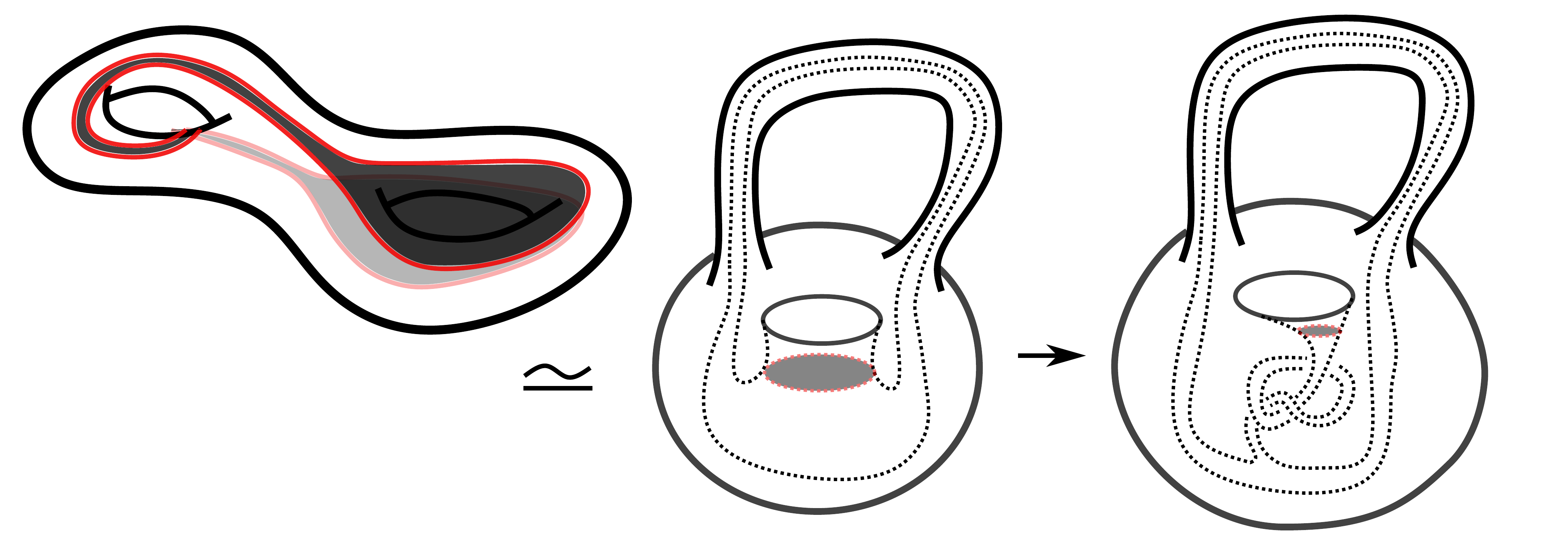}
\caption{Essential transverse disk of \textit{type I}.}  
\end{figure}
\end{center}

\begin{lemma}[\textbf{Meridian with respect to a \knottable disk}]\label{Meridians_w_r_t_explo_disk1}
Given a prime handlebody knot $\SSS=(\inside,\Sigma,\outside)$ of
genus $2$ with an essential \knottable disk $D$.
Then there exists a system of meridian disks $\{M_{1},M_{2}\}$ of $E$ 
such that $\partial D$ intersects $M_1$
at a single point and $M_2 \cap \partial D=\emptyset$.
Furthermore, if $\{M_1^{\prime},M_2^{\prime}\}$ is another 
system of meridian disks with $M_1' \cap \partial D$ a single point and
$M_2' \cap D =\emptyset$,
then $M_2,M_2'$ are isotopic in $E$.
\end{lemma}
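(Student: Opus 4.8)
The whole statement reduces to the structure of $\pi_1(\inside)$, which is free of rank two; the pivotal fact — and the main obstacle — is that $[\partial D]$ is a \emph{primitive} element of $\pi_1(\inside)$, i.e.\ a member of some free basis. Granting this, existence is quick. Since $\SSS$ is prime, the previous lemma forbids a type~\textit{I} disk, so $\partial D$ is non-separating on $\Sigma$. Choose a non-separating meridian disk $\Delta$ of $\inside$ whose complementary solid torus has core representing $[\partial D]$ (such a disk exists for any primitive class, by transporting a standard one via a self-homeomorphism of $\inside$). Because $[\partial D]$ lies in the image of $\pi_1(\inside\setminus\Delta)\hookrightarrow\pi_1(\inside)$, the simple closed curve $\partial D$ can be isotoped off $\partial\Delta$ on $\Sigma$; inside the solid torus $V:=\inside\setminus\Delta$ it then represents a generator of $\pi_1(V)$, hence sits on $\partial V$ as a longitude. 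Put $m_2:=\partial\Delta$ and let $m_1$ be a meridian disk of $V$ (also a meridian of $\inside$, disjoint from $m_2$): then $m_1$ meets $\partial D$ in one point and $\{m_1,m_2\}$ cuts $\inside$ into a ball, so this is the required complete meridian system.

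For uniqueness, let $\{m_1,m_2\}$ and $\{m_1',m_2'\}$ both satisfy the conclusion. As $m_2$ is a non-separating meridian disjoint from $\partial D$, the curve $\partial D$ can be isotoped into $\Sigma\setminus m_2$, so $[\partial D]$ is conjugate into the infinite cyclic group $\pi_1(V)$, where $V$ is $\inside$ cut along the disk bounded by $m_2$; since $m_1\subset\Sigma\setminus m_2$ bounds a meridian disk of $V$ and meets $\partial D$ exactly once, $[\partial D]$ in fact generates $\pi_1(V)$. As $[\partial D]$ is primitive and $\pi_1(\inside)=\pi_1(V)\ast\mathbb{Z}$, the rank-one free factor $\pi_1(V)=\langle[\partial D]\rangle$ is determined by $[\partial D]$ alone; the same analysis applies to $m_2'$, so $\pi_1(V)=\pi_1(V')$ as subgroups of $\pi_1(\inside)$. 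One then recovers $m_2$ up to isotopy from $V$ together with the framing recorded by $m_1$ (the disks bounded by $m_1,m_2$ cut $\inside$ into a ball and $\partial D$ meets $m_1$ once, which pins down the $1$-handle re-glued to $V$); carrying this out — placing the disks in minimal position, running the usual outermost-arc argument, and using the framing to eliminate the Dehn-twist ambiguity along curves disjoint from $\partial D$ — is routine but is the one spot needing care. Finally, isotopic simple closed curves on $\Sigma$ are freely homotopic, as asserted.

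\textbf{Primitivity of $[\partial D]$ (the crux).} First isotope $D$ so that every component of $\mathring D\cap\inside$ is an essential disk of $\inside$: a boundary-parallel component bounds a disk on $\Sigma$ which, since $\partial D$ is non-separating, does not contain $\partial D$, so an innermost one can be pushed off. Let $\inside^{-}$ be $\inside$ with open regular neighbourhoods of these essential disks removed — again a submanifold of $\sphere$, a disjoint union of handlebodies — and let $\inside_0$ be the component meeting $\partial D$; then $\pi_1(\inside_0)$ is a free factor of $\pi_1(\inside)=F_2$, and since $[\partial D]\neq 1$ (truly knottable) $\inside_0$ is a solid torus or all of $\inside$. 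Pushing the pieces of $D$ lying in $\inside$ into the complement turns $D$ into a properly embedded disk in the enlarged outside $\outside^{+}$; compressing $\outside^{+}$ along $D$ attaches a $2$-handle to $\inside_0$ along $\partial D$, and the result is still a compact $3$-submanifold of $\sphere$, hence has torsion-free fundamental group. If $\inside_0$ is a solid torus, $\partial D$ is a simple closed curve of some winding number $k$ on its boundary torus and the $2$-handle produces $\mathbb{Z}/k$, so $|k|=1$ (and $k\neq 0$ since $\partial D$ bounds no disk in $\inside$), whence $[\partial D]$ generates $\pi_1(\inside_0)$ and so is primitive in $F_2$. If instead $\inside_0=\inside$, then $\mathring D\cap\inside=\emptyset$, $\outside^{+}=\outside$, and $\partial D$ bounds a disk in $\outside$; compressing $\outside$ along it gives a genus-one scene, and as a torus in $\sphere$ bounds a solid torus on at least one side, either $\inside$-with-a-$2$-handle-along-$\partial D$ is a solid torus, forcing $[\partial D]$ primitive (attaching a $2$-handle to a genus-$g$ handlebody along $\gamma$ gives $H_{g-1}$ exactly when $[\gamma]$ is primitive), or $\outside$ is a handlebody, so $\SSS$ is the trivial genus-two scene and $\partial D$, being a meridian of $\outside$, is primitive in $\pi_1(\inside)$ because a complete meridian system of $\outside$ maps onto a free basis of $\pi_1(\inside)$ (as $\inside\cup_\Sigma\outside=\sphere$). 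In every case $[\partial D]$ is primitive. The torsion-freeness of $\pi_1$ of compact $3$-submanifolds of $\sphere$, the ``$2$-handle $\Rightarrow H_{g-1}$ iff primitive'' criterion, and the innermost-disk bookkeeping are the substance of this step.
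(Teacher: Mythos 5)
Your reduction to the primitivity of $[\partial D]$ in $\pi_1(\inside)\cong F_2$ is an attractive idea, and the primitivity argument itself (attach a $2$-handle along $\partial D$ and use torsion-freeness of $\pi_1$ of compact submanifolds of $\sphere$) is essentially sound. But both places where you convert the algebra back into the geometry of curves on $\Sigma$ have genuine gaps. First, in the existence step you fix a meridian disk $\Delta$ in advance and claim that ``because $[\partial D]$ lies in the image of $\pi_1(\inside\setminus\Delta)$, the simple closed curve $\partial D$ can be isotoped off $\partial\Delta$ on $\Sigma$.'' This is false for a fixed $\Delta$: a simple closed curve whose free homotopy class lies in the complementary free factor can still have positive geometric intersection number with $\partial\Delta$ on $\Sigma$, because an arc of $\partial D$ crossing $\Delta$ twice may be homotopic into $\Delta$ through the handlebody without cobounding a bigon on the surface (the ``wave'' phenomenon). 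What the lemma requires is a disk adapted to $\partial D$, i.e.\ geometric primitivity; deducing that from algebraic primitivity is a theorem in its own right (Gordon, \emph{On primitive sets of loops in the boundary of a handlebody}) and is not obtained by isotoping $\partial D$. The paper sidesteps this entirely by exploiting the disks $D_i=\mathring D\cap\inside$ that come with $D$: these are properly embedded disks in the handlebody $\inside$, cutting along them leaves exactly one solid-torus component whose boundary contains $\partial D$, and $m_1$, $m_2$ are read off from that decomposition.

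Second, the uniqueness half is not actually proved. Knowing that $\langle[\partial D]\rangle$ is the unique rank-one free factor containing $[\partial D]$ gives $\pi_1(V)=\pi_1(V')$ as subgroups, but two non-separating meridian disks of $H_2$ can carry the same complementary free factor without having homotopic boundaries, so $m_2\simeq m_2'$ does not follow; the ``outermost-arc plus framing'' step you defer as routine is precisely the content of the statement. Compare the paper's short argument: attach a $2$-handle to $\inside$ along a neighbourhood of $\partial D$ in $\Sigma$; the result is a solid torus in which both $m_2$ and $m_2'$ are meridians, hence (after making them disjoint) they cobound a region missing the $2$-handle whose intersection with $\Sigma$ yields the homotopy. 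A smaller issue: in your trivial-scene sub-case the claim that an arbitrary complete meridian system of $\outside$ maps to a free basis of $\pi_1(\inside)$ is asserted without proof; fortunately that sub-case is vacuous, since $\mathring D\cap\inside=\emptyset$ already contradicts primeness by Suzuki's criterion, as the paper notes at the start of its proof.
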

\begin{proof}
Since $(\inside,\Sigma,\outside)$ is prime,
$\mathring{D}\cap \inside\neq \emptyset$. 
If one of the disks, say $D_1$ in $\mathring{D}\cap \inside=\bigcup_{i=1}^n D_i$ is a meridian disk, then we define $M_2$ to be $D_{1}$.
The complement $\tilde{\inside}$ of 
an open tubular neighborhood of $\bigcup_{i=1}^{n}D_{i}$ in $\inside$ contains exactly one component homeomorphic to a solid torus with 
$\partial D$ on the boundary of the solid torus since  
$\partial D$ does not bound any disk in $\inside$. 
In particular, the solid torus is unknotted in $\sphere$.
We define $M_1$ to be a meridian disk of the solid torus.

On the other hand, if none of $D_i's$ is a meridian disk of $\inside$, namely $D_i's$ all separating $\inside$, then 
\[\tilde{\inside}:=E\setminus \mathring{\mathfrak{N}}(\bigcup_{i=1}^n D_i)\] 
contains two solid tori. One 
of them contains $\partial D$, while the other does not intersect $D$. 
We define $M_1$ to be the meridian disk of 
the former, and $M_2$ the meridian disk of the latter.

Suppose there is another system of meridian disks
$\{M_1^{\prime},M_2^{\prime}\}$ 
satisfying $M_1^{\prime}\cap   D$ is a point and
$M_2'\cap   D=\emptyset$. Then we attach a $2$-handle $h^2$
to $\inside$ along a tubular neighborhood of $\partial D$ in $\partial \inside$. The resulting manifold $\hat{\inside}:=\inside\cup h^2$ is a solid torus with both $M_2,M_2'$ meridian disks of $\hat{\inside}$.

If $\partial M_2\cap \partial M_2'\neq \emptyset$, 
then there are at least two arcs of $\partial M_2\cap \partial M_2'$ 
innermost in $M_2$, 
at least one of which cuts off disks $Q,Q'$ from $M_2,M_2'$, 
respectively, such that $Q'\cap M_2=M_2'\cap M_2$,  
$D'':=Q \cup Q'$ inessential in $\hat{\inside}$,
and the $3$-ball component of $\hat{\inside}\setminus \mathring{\mathfrak{N}}(D'')$
not containing $h^2$. Therefore, it may be assumed that 
$M_2,M_2'$ are disjoint, and the $3$-ball component 
in $\hat{\inside}\setminus \mathfrak{N}(M_2\cup M_2')$
not containing $h^2$ gives an isotopy 
between $M_2$ and $M_2'$ in $\inside$.  
\end{proof}
 
\begin{definition}[\textbf{Associated meridian}]
We call the boundary $m$ of $M_2$ in Lemma \ref{Meridians_w_r_t_explo_disk1}  
the associated meridian w.r.t.\ the essential 
\knottable disk $D$. 
\end{definition}

Given an essential \knottable disk $D$ of 
a prime handlebody knot $(\inside,\Sigma,\outside)$,
we can identify $\inside$ with the complement of
a trivial scene of genus $2$, $H_2\subset\sphere$, 
and $\partial D$ with one of its meridians.

\begin{figure}[ht]
    \centering
    \subfloat{ {
    \def\svgwidth{0.42\columnwidth}
\begingroup%
  \makeatletter%
  \providecommand\color[2][]{%
    \errmessage{(Inkscape) Color is used for the text in Inkscape, but the package 'color.sty' is not loaded}%
    \renewcommand\color[2][]{}%
  }%
  \providecommand\transparent[1]{%
    \errmessage{(Inkscape) Transparency is used (non-zero) for the text in Inkscape, but the package 'transparent.sty' is not loaded}%
    \renewcommand\transparent[1]{}%
  }%
  \providecommand\rotatebox[2]{#2}%
  \newcommand*\fsize{\dimexpr\f@size pt\relax}%
  \newcommand*\lineheight[1]{\fontsize{\fsize}{#1\fsize}\selectfont}%
  \ifx\svgwidth\undefined%
    \setlength{\unitlength}{1275.59055118bp}%
    \ifx\svgscale\undefined%
      \relax%
    \else%
      \setlength{\unitlength}{\unitlength * \real{\svgscale}}%
    \fi%
  \else%
    \setlength{\unitlength}{\svgwidth}%
  \fi%
  \global\let\svgwidth\undefined%
  \global\let\svgscale\undefined%
  \makeatother%
  \begin{picture}(1,0.55555556)%
    \lineheight{1}%
    \setlength\tabcolsep{0pt}%
    \put(0.19483269,0.53790766){\color[rgb]{0,0,0}\makebox(0,0)[lt]{\begin{minipage}{0.16772553\unitlength}\raggedright \end{minipage}}}%
    \put(0,0){\includegraphics[width=\unitlength,page=1]{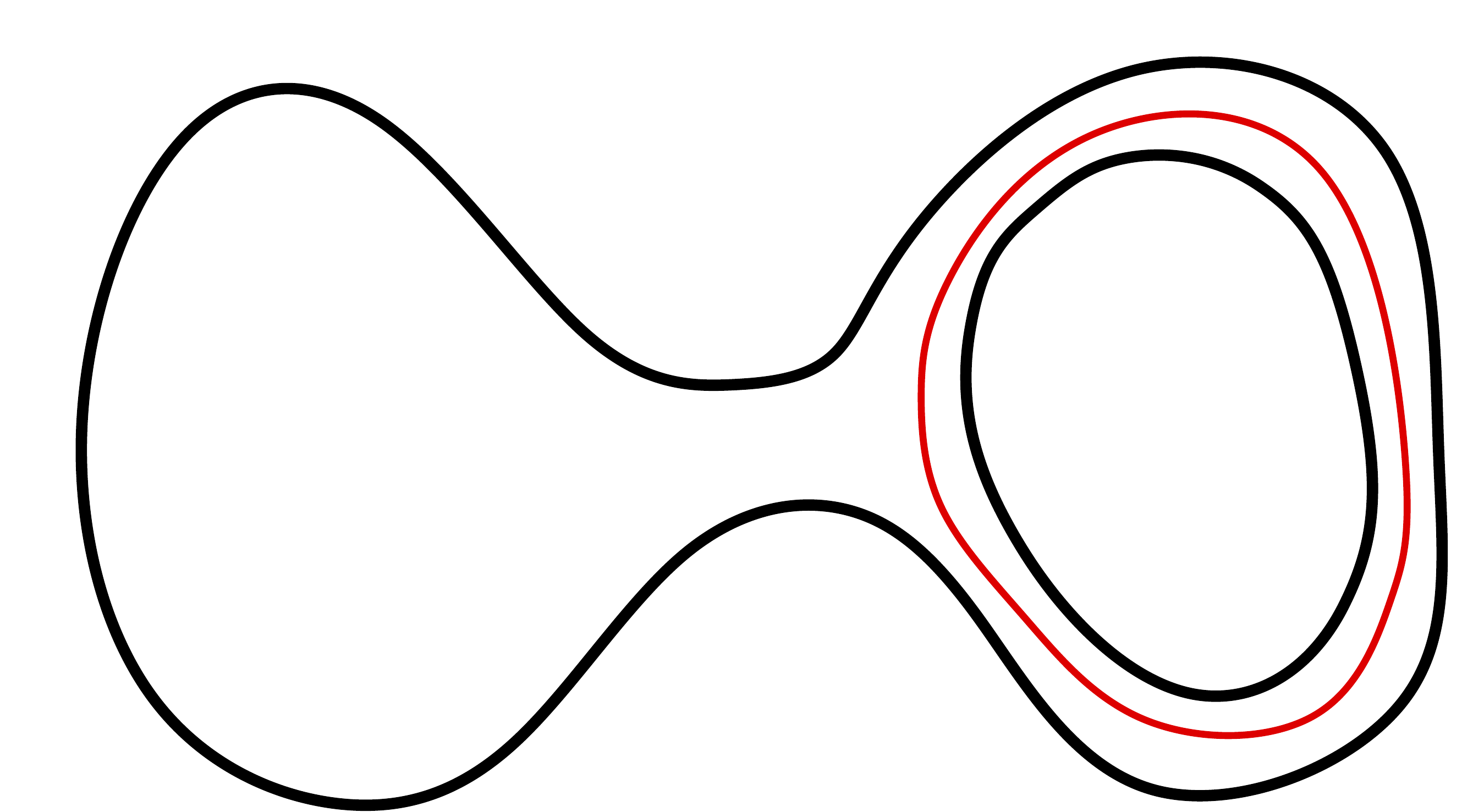}}%
    \put(0.45206433,0.44373856){\color[rgb]{0,0,0}\makebox(0,0)[lt]{\lineheight{1.25}\smash{\begin{tabular}[t]{l}$E$\end{tabular}}}}%
    \put(0,0){\includegraphics[width=\unitlength,page=2]{Realization_of_E1.pdf}}%
    \put(0.46378655,0.23930535){\color[rgb]{0,0,0}\makebox(0,0)[lt]{\lineheight{1.25}\smash{\begin{tabular}[t]{l}$H_2$\end{tabular}}}}%
    \put(-0.03787993,0.22730482){\color[rgb]{0,0,0}\makebox(0,0)[lt]{\lineheight{1.25}\smash{\begin{tabular}[t]{l} {\tiny $\partial D$}\end{tabular}}}}%
    \put(0.20118593,0.39248906){\color[rgb]{0,0,0}\makebox(0,0)[lt]{\lineheight{1.25}\smash{\begin{tabular}[t]{l}{\tiny $m_1$}\end{tabular}}}}%
    \put(0.79815809,0.48402756){\color[rgb]{0,0,0}\makebox(0,0)[lt]{\lineheight{1.25}\smash{\begin{tabular}[t]{l}{\tiny$m_2$}\end{tabular}}}}%
  \end{picture}%
\endgroup%

} }%
    \qquad
    \subfloat{ {
\def\svgwidth{0.42\columnwidth}
\begingroup%
  \makeatletter%
  \providecommand\color[2][]{%
    \errmessage{(Inkscape) Color is used for the text in Inkscape, but the package 'color.sty' is not loaded}%
    \renewcommand\color[2][]{}%
  }%
  \providecommand\transparent[1]{%
    \errmessage{(Inkscape) Transparency is used (non-zero) for the text in Inkscape, but the package 'transparent.sty' is not loaded}%
    \renewcommand\transparent[1]{}%
  }%
  \providecommand\rotatebox[2]{#2}%
  \newcommand*\fsize{\dimexpr\f@size pt\relax}%
  \newcommand*\lineheight[1]{\fontsize{\fsize}{#1\fsize}\selectfont}%
  \ifx\svgwidth\undefined%
    \setlength{\unitlength}{1332.28346457bp}%
    \ifx\svgscale\undefined%
      \relax%
    \else%
      \setlength{\unitlength}{\unitlength * \real{\svgscale}}%
    \fi%
  \else%
    \setlength{\unitlength}{\svgwidth}%
  \fi%
  \global\let\svgwidth\undefined%
  \global\let\svgscale\undefined%
  \makeatother%
  \begin{picture}(1,0.54255319)%
    \lineheight{1}%
    \setlength\tabcolsep{0pt}%
    \put(0.18654194,0.51501797){\color[rgb]{0,0,0}\makebox(0,0)[lt]{\begin{minipage}{0.16058827\unitlength}\raggedright \end{minipage}}}%
    \put(0,0){\includegraphics[width=\unitlength,page=1]{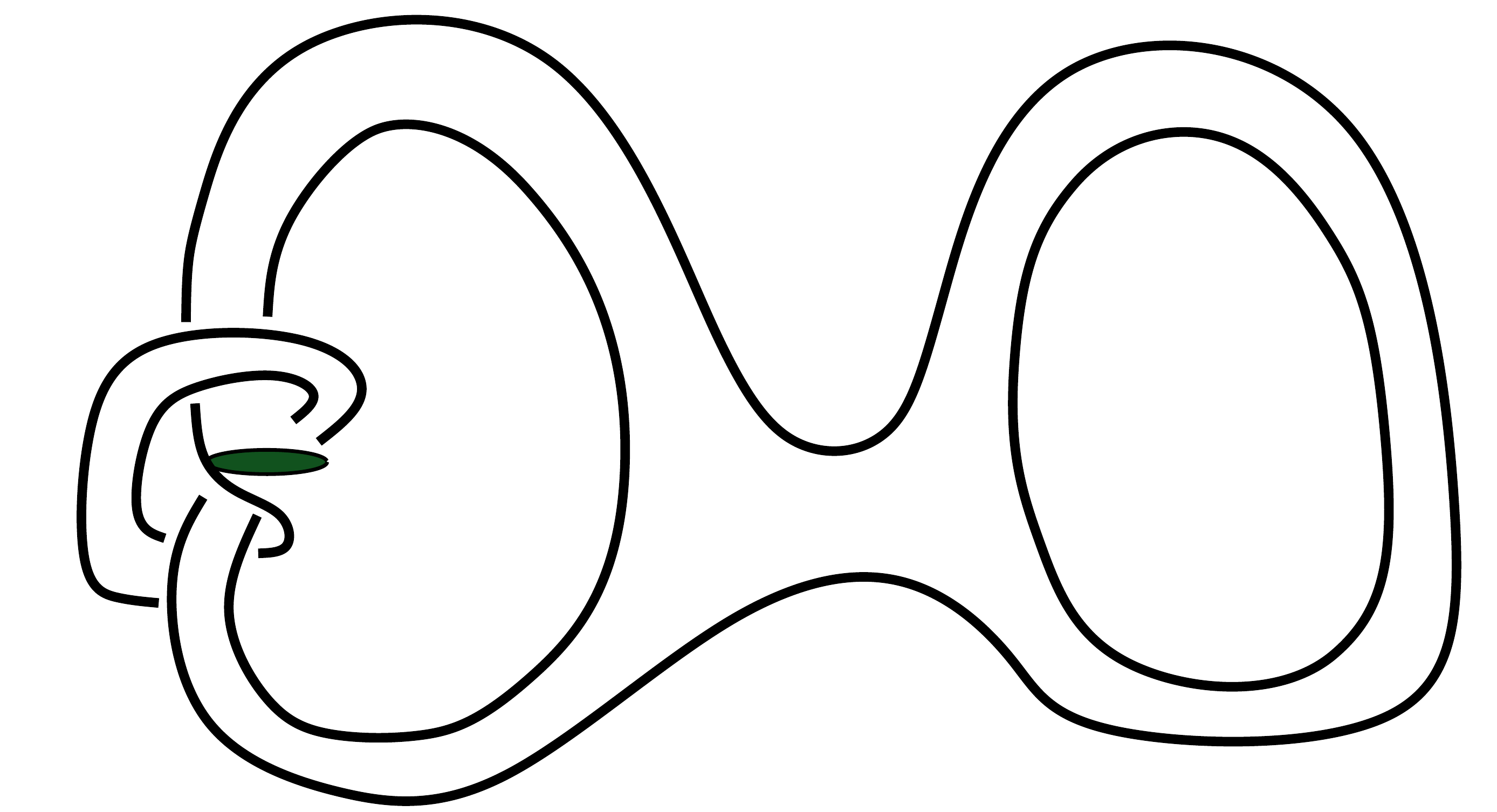}}%
    \put(0.46847989,0.48972895){\color[rgb]{0,0,0}\makebox(0,0)[lt]{\lineheight{1.25}\smash{\begin{tabular}[t]{l}$E^{D,K}$\end{tabular}}}}%
    \put(0.49788052,0.18676053){\color[rgb]{0,0,0}\makebox(0,0)[lt]{\lineheight{1.25}\smash{\begin{tabular}[t]{l}$H_2$\end{tabular}}}}%
    \put(0,0){\includegraphics[width=\unitlength,page=2]{Realization_of_E2.pdf}}%
    \put(0.23541016,0.23320725){\color[rgb]{0,0,0}\makebox(0,0)[lt]{\lineheight{1.25}\smash{\begin{tabular}[t]{l}{\tiny $\partial D$}\end{tabular}}}}%
    \put(0,0){\includegraphics[width=\unitlength,page=3]{Realization_of_E2.pdf}}%
  \end{picture}%
\endgroup%
    
    } }%
    \caption{Realizing \inside \
    as the complement of a trivial scene.}
    \label{Realization_of_E}
\end{figure}

Then $\inside^{D,K}$ is homeomorphic to 
the outside of the scene obtained by performing the satellite construction
along $K$ w.r.t.\ a transverse disk bounded by $\partial D$ in $H_2$.

\begin{lemma}[\textbf{Meridians into meridians}]\label{Two_truely_explosive_disks}
Let 
$\SSS=(\inside,\Sigma,\outside)$,
$\SSS'=(\inside',\Sigma',\outside')$ 
be connected prime handlebody knots of genus two, and $D$,
$D^{\prime}$ essential \knottable disks in $\SSS$ 
and $\SSS'$, and 
$m$, $m^{\prime}$ the associated meridians w.r.t.\ $D, D'$, respectively.
Then any equivalence between $\SSS^{D,K}$ and $\SSS^{'D^{\prime},K}$ can be isotoped such that it sends $m$ to $m^{\prime}$.  
\end{lemma}
\begin{proof}
Observe first that $\inside^{D,K}\simeq \inside^{'D',K}$ 
have the prime decomposition 
\[\inside^{D,K}\simeq V\#  \mathfrak{C}(K)\simeq \inside^{'D',K},\]
where $\mathfrak{C}(K)$ is the complement of an open tubular neighborhood of 
$K$ (see Fig.\ \ref{Realization_of_E}),
and $m$ and $m^{\prime}$ can be identified with 
meridians of the solid torus $V$. 
By \cite[$3.4;3.6$]{Suz:75}, up to isotopy any diffeomorphism between 
$\inside^{D,K}$ and $\inside^{'D',K}$ sends $m$ to $m^{\prime}$.
On the other hand, any equivalence between $\SSS^{D,K}$ and $\SSS^{'D',K}$
induces a diffeomorphism between $\inside^{D,K}$ and $\inside^{'D',K}$, 
and hence can be isotoped such that $m$ is sent to $m^{\prime}$.  
\end{proof}



\begin{example}[\textbf{Handcuff}] 
Consider a handcuff graph $\mathsf{G}$ (left in \eqref{eq:handcuff_transverse_disks}). Attach two $2$-cells (right in \eqref{eq:handcuff_transverse_disks}) to its two circles, respectively.
\begin{equation}\label{eq:handcuff_transverse_disks}
\begin{tikzpicture}
\draw [ultra thick] (3,0.5) circle [radius=0.5];
\draw [ultra thick] (1,0.5) circle [radius=0.5];
\draw [ultra thick] (1.5,0.5) -- (2.5,0.5);
\draw [fill=gray, ultra thick] (7,0.5) circle [radius=0.5];
\node  at (7,.5){$\mathsf{D}^\prime$};
\draw [fill=gray, ultra thick] (5,0.5) circle [radius=0.5];
\node  at (5,.5) {$\mathsf{D}$};
\draw [ultra thick] (5.5,0.5) -- (6.5,0.5);
\end{tikzpicture}
\end{equation}
The resulting space is a $2$-dimensional $CW$-complex $\mathsf{X}$.
Let $\iota:\mathsf{X}\rightarrow \sphere$ be a map 
such that its restrictions 
$\iota\vert_{\mathsf G}$, $\iota\vert_{\mathsf D}$, and 
$\iota\vert_{\mathsf{D}'}$ are embeddings, and 
intersections between the two $2$-cells $D:=\iota(\mathsf{D})$ and 
$D':=\iota(\mathsf{D}')$, and 
between each of them and $G:=\iota(\mathsf{G})$ are transversal.

A tubular neighborhood $\inside_G$ of $G$ induces 
a handlebody knot 
\[\SSS_{G}=(\inside_{G},\partial \inside_{G},\outside_{G}),\]
where $\outside_{G}$ 
is the closure of the complement of $\inside_G$ in $\sphere$, and 
$D,D'$ give 
two \knottable disks of $\SSS_{G}$.
It is not always possible to perform the satellite construction w.r.t.\ 
$D,D'$ at the same time in general
as $D\cap D'$ might not be empty. 

Denote by 
$\SSS_{G}^{K}=(\inside_{G}^{K},\partial \inside_{G}^{K}, \outside_{G}^{K})$ and 
$\SSS_{G}^{\prime,K}=(\inside_{G}^{\prime,K},\partial \inside_{G}^{\prime,K}, \outside_{G}^{\prime,K})$
the connected scenes obtained by performing the satellite construction 
along an oriented knot $K$ w.r.t.\ $D$ and $D^{\prime}$, respectively. Then the $3$-manifolds $\inside^{K}_{G}$ and  $\inside^{\prime,K}_G$ (resp.\ 
$\outside^{K}_{G}$ and $\outside^{\prime,K}_{G}$) are homeomorphic, 
and hence the connected scenes 
$\SSS_{G}^{K}$ and $\SSS_{G}^{\prime,K}$
are bi-knotted scenes with homeomorphic components. 
Whether or not this pair are equivalent depends on
the symmetry of the spatial graph $G\subset\sphere$.
\end{example}

For instance, for any of the handcuff diagrams $\operatorname{HK}4_{1}$, $\operatorname{HK}5_{4}$, 
$\operatorname{HK}6_{5}$, $\operatorname{HK}6_{7}$, $\operatorname{HK}6_{10}$ 
and $\operatorname{HK}6_{16}$ in \cite{IshKisMorSuz:12}, there is an 
obvious isotopy sending the graph to itself with two circles exchanged. 
Hence, performing the satellite construction w.r.t.\ $D$ and $D^{\prime}$ results in equivalent scenes.
On the other hand, by Theorem \ref{Two_truely_explosive_disks}, we have a sufficient condition for $\SSS_{G}^{K},\SSS_{G}^{\prime,K}$ to be inequivalent. 


\begin{corollary}[\textbf{Inequivalence criterion}]\label{cor:inequivalent}
Suppose $\SSS_{G}$ is prime, and  
$m$ and $m^{\prime}$ be the associated meridians with respect to $D$ and $D^{\prime}$, respectively.  
If there is no self-homeomorphism of $\outside_{G}$ sending $m$ to $m^{\prime}$,
then $\SSS_{G}^{K},\SSS_{G}^{\prime,K}$ are inequivalent, for any non-trivial $K$.
\end{corollary}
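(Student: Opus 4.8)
The plan is to derive the corollary directly from Theorem~\ref{Two_truely_explosive_disks} by a contradiction argument, transporting the conclusion back to $\outside_{G}$ along the natural homeomorphism of \eqref{homeo_F_to_Fdk}. In the notation of the blow-up construction we have $\SSS_{G}^{K}=\SSS_{G}^{D,K}$ and $\SSS_{G}^{\prime,K}=\SSS_{G}^{D^{\prime},K}$. Suppose, for contradiction, that $\SSS_{G}^{K}$ and $\SSS_{G}^{\prime,K}$ are equivalent for some non-trivial knot $K$. Then there is an ambient isotopy $\Phi$ of $\sphere$ with $\Phi_{1}(\inside_{G}^{K})=\inside_{G}^{\prime,K}$, whence also $\Phi_{1}(\partial\inside_{G}^{K})=\partial\inside_{G}^{\prime,K}$ and $\Phi_{1}(\outside_{G}^{K})=\outside_{G}^{\prime,K}$, so that $\Phi_{1}$ restricts to a homeomorphism $\outside_{G}^{K}\xrightarrow{\ \sim\ }\outside_{G}^{\prime,K}$.

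First I would invoke Theorem~\ref{Two_truely_explosive_disks} with $\SSS=\SSS^{\prime}=\SSS_{G}$: since $\SSS_{G}$ is prime and $D$, $D^{\prime}$ are truly \knottable disks in $\SSS_{G}$, the equivalence $\Phi_{1}$ must send the associated meridian $m$ of $D$ to the associated meridian $m^{\prime}$ of $D^{\prime}$. Next I would push this relation back onto $\outside_{G}$. By Lemma~\ref{Meridians_w_r_t_explo_disk1} the curve $m$ lies on a portion of $\partial\inside_{G}$ disjoint from the whole disk $D$, so a sufficiently thin ball neighbourhood of $D$ misses $m$, and the natural homeomorphism $\iota_{K}\colon\outside_{G}\xrightarrow{\ \sim\ }\outside_{G}^{K}$ of \eqref{homeo_F_to_Fdk}, being the identity off that ball, satisfies $\iota_{K}(m)=m$; symmetrically one obtains $\iota_{K}^{\prime}\colon\outside_{G}\xrightarrow{\ \sim\ }\outside_{G}^{\prime,K}$ with $\iota_{K}^{\prime}(m^{\prime})=m^{\prime}$. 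Then
\[
h:=(\iota_{K}^{\prime})^{-1}\circ\bigl(\Phi_{1}\vert_{\outside_{G}^{K}}\bigr)\circ\iota_{K}
\]
is a self-homeomorphism of $\outside_{G}$ with $h(m)=m^{\prime}$, contradicting the hypothesis; this would finish the proof.

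The only real obstacle is bookkeeping of the curves: one must check that the meridians $m$, $m^{\prime}$ occurring in the hypothesis, in Theorem~\ref{Two_truely_explosive_disks}, and after performing the two blow-ups are consistently the same curves (up to isotopy on the relevant boundary surface), and that the two blow-up balls can indeed be chosen small enough to miss $m$ and $m^{\prime}$ respectively; this is exactly what the construction of the associated meridian in Lemma~\ref{Meridians_w_r_t_explo_disk1} supplies, since there $m$ (resp. $m^{\prime}$) is produced as the meridian of a solid-torus piece of $\inside_{G}$ lying away from $D$ (resp. $D^{\prime}$). One should also note that both ``$\Phi_{1}$ sends $m$ to $m^{\prime}$'' and ``no self-homeomorphism of $\outside_{G}$ sends $m$ to $m^{\prime}$'' are understood up to isotopy of curves on $\partial\outside_{G}$, so the two statements are compatible, and that the non-triviality of $K$ is precisely the hypothesis under which Theorem~\ref{Two_truely_explosive_disks} (via Suzuki's theorem applied to the $\partial$-prime decomposition $\inside_{G}^{K}\simeq T\#(\sphere\setminus N(K))$) is able to pin down $m$ and $m^{\prime}$.
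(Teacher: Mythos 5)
Your argument is correct and is essentially the paper's own proof: both invoke Theorem~\ref{Two_truely_explosive_disks} to force any equivalence $\SSS_{G}^{K}\simeq\SSS_{G}^{\prime,K}$ to carry $m$ to $m^{\prime}$, and then conjugate by the natural identifications $\iota_{K}\colon\outside_{G}\to\outside_{G}^{K}$ and $\iota_{K}^{\prime}\colon\outside_{G}\to\outside_{G}^{\prime,K}$ of \eqref{homeo_F_to_Fdk} (which fix $m$ and $m^{\prime}$ respectively) to produce the forbidden self-homeomorphism of $\outside_{G}$. Your extra bookkeeping about the blow-up ball missing the associated meridian only makes explicit what the paper leaves implicit.
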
 

\begin{proof}
Via the homeoomorphism in \eqref{homeo_F_to_Fdk}, there are identifications 
\[\outside_{G}^{K}\simeq \outside_{G},\quad \outside_{G}^{\prime,K}\simeq \outside_{G};\]
the first homeomorphism preserves $m$, while the second preserves $m^{\prime}$.
Thus, by Lemma \ref{Two_truely_explosive_disks}, 
any equivalence between 
$\SSS_{G}^{K}$ and $\SSS_{G}^{\prime,K}$ 
can be isotoped such that the induced 
self-homeomorphism of $\outside_{G}$ sending $m$ to $m^{\prime}$. 
\end{proof}

Corollary \ref{cor:inequivalent} 
and the invariant derived from the fundamental span in Section \ref{sec:representations} imply the following: 
\begin{theorem}[\textbf{Inequivalence of Ishii et al's handcuffs}]\label{Asymmetric_handcuff_graph}
The connected scenes obtained by performing the satellite construction w.r.t.\ 
$D$ and $D^{\prime}$ along a non-trivial $K$ on
any of the handcuff graph diagrams $\operatorname{HK}5_{1}$, $\operatorname{HK}6_{1}$, 
and $\operatorname{HK}6_{11}$ in \cite{IshKisMorSuz:12} are inequivalent.
\end{theorem}

In particular, taking one of handcuff graph diagrams in Theorem \ref{Asymmetric_handcuff_graph}, 
and performing the satellite construction along inequivalent oriented knots 
w.r.t.\ $D;D'$, we get a an infinite family of pairs 
of inequivalent connected scenes with homeomorphic components.

The only handcuff diagram in Ishii et al.'s handlebody knot table 
not mentioned yet is $\operatorname{HK}6_{2}$.
There is a less obvious diffeomorphism 
sending $m$ to $m'$. The moves in Fig.\ \ref{fig:symmetry_hk6_2} 
induces a diffeomorphism swapping two circles in the handcuff graph diagram 
$\operatorname{HK}6_2$ of Ishii's handlebody knot table.

\begin{figure}[ht]
    \centering
    \subfloat{ {\includegraphics[scale=.075]{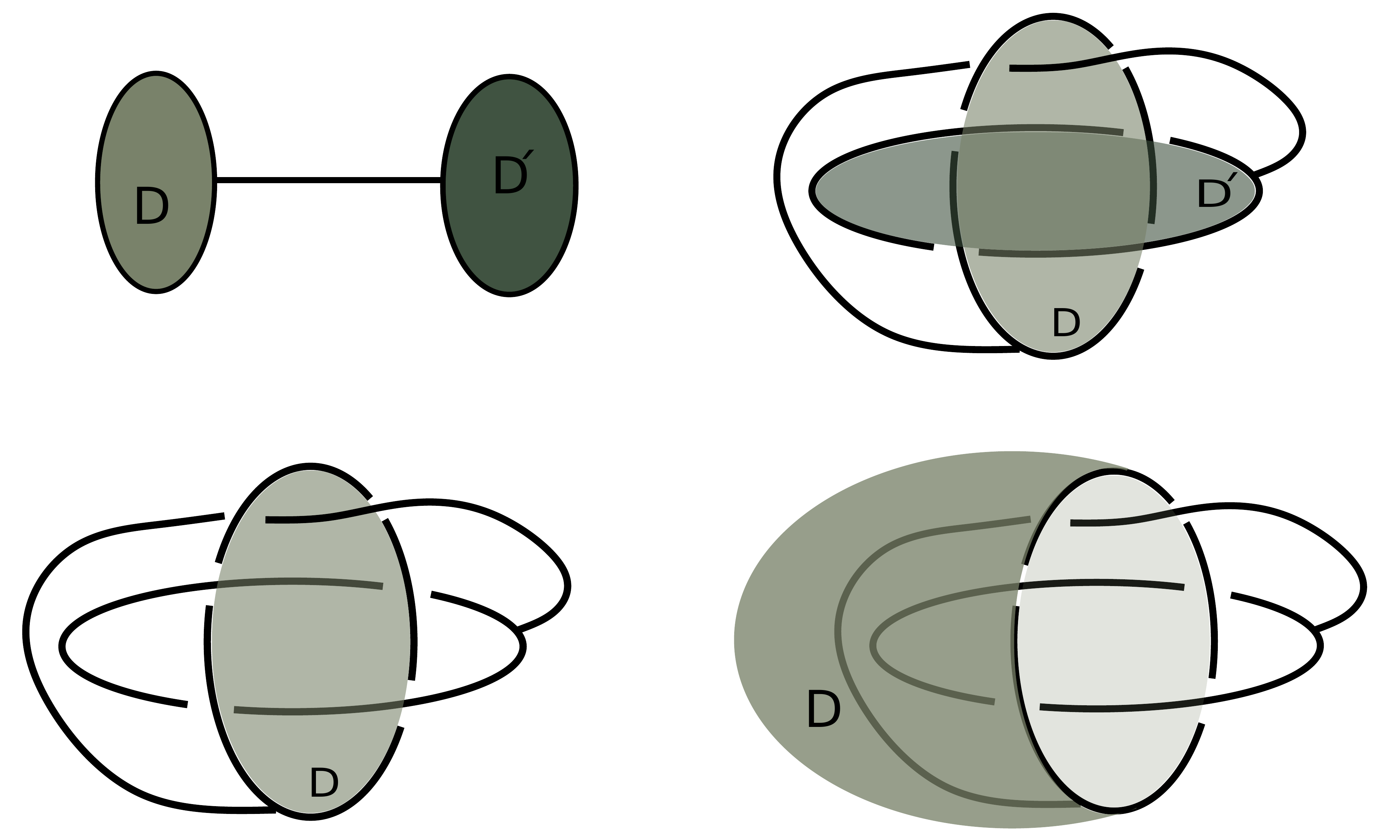} } }%
    \qquad
    \subfloat{ {\includegraphics[scale=.072]{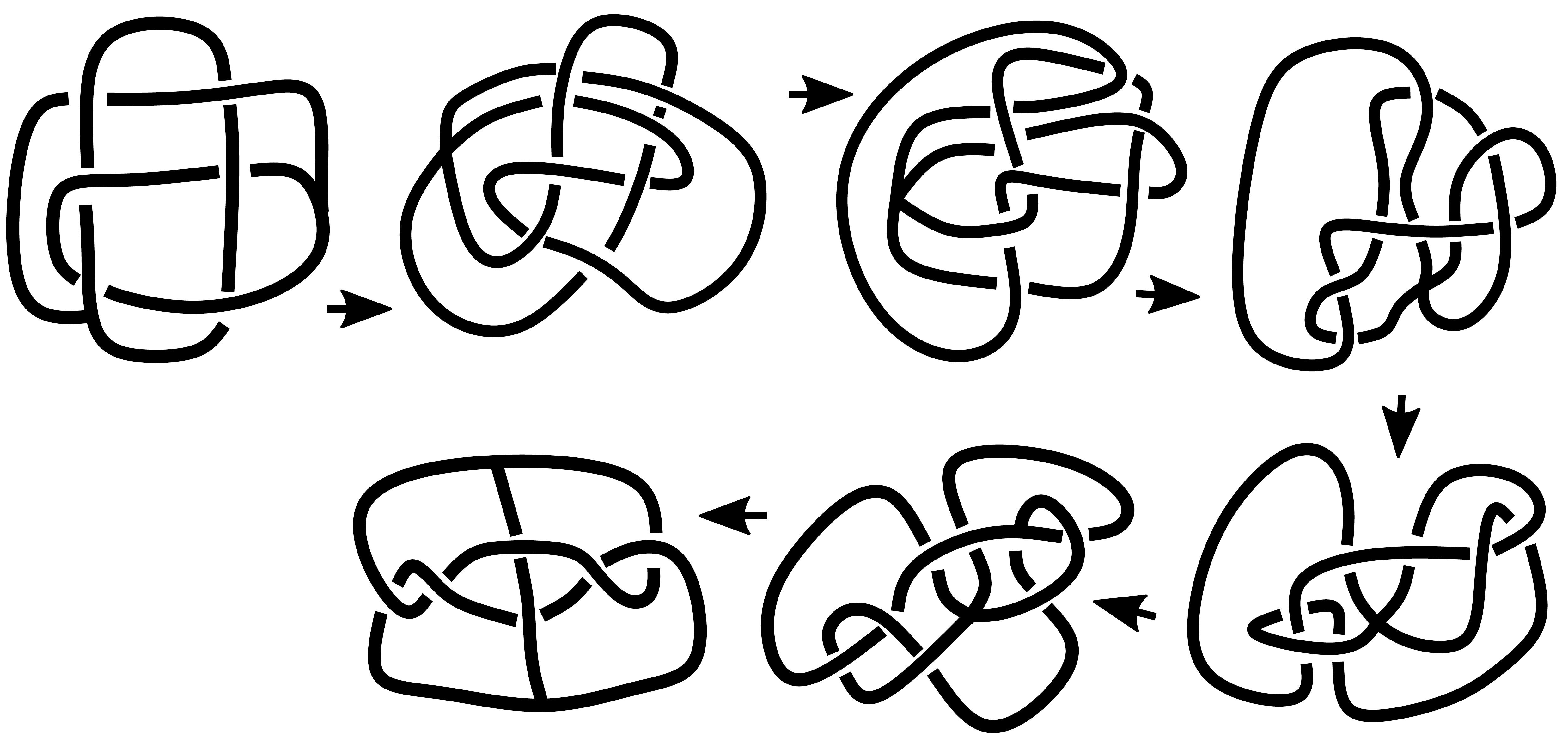}} }%
    \caption{Symmetry in $\operatorname{HK}6_2$.} 
    \label{fig:symmetry_hk6_2}
\end{figure}

 

%
%
 
\subsection{Chirality}
Chirality of a connected scene concerns the relation between a connected scene
and its mirror image; the next definition generalizes the notion of chiral knots.

\begin{definition}[\textbf{Mirror image}]\label{def:mirror_image}
Given a connected scene $\SSS=(\inside,\Sigma,\outside)$, 
its mirror image $m\SSS=\{m\inside,m\Sigma,m\outside\}$ 
is the connected scene defined as follows:
$m\inside$ is the image of $\inside$ in $\sphere$ under an orientation-reversing 
self-diffeomorphism of $\sphere$, the orientation of $m\inside$ is 
induced from $\sphere$, $m\Sigma$ is the boundary of $m\inside$, and 
$m\outside:=\overline{\sphere\setminus m\inside}$.

\noindent
A connected scene $\SSS$ is chiral if $\SSS$ and $m\SSS$
are inequivalent connected scenes; otherwise $\SSS$ is
an amphichiral scene.
\end{definition}
In the present paper, we shall restrict our focus on 
the special case of chiral knots and study the chirality
of $9_{42}$ and $10_{71}$ in Rolfsen's knot table; they are
denoted by $\operatorname{K}9_{42}$ and $\operatorname{K}10_{71}$
here to avoid confusion with the notation in Ishii 
et al.'s handlebody knot table (see Fig. \ref{fig:K9_42_and_K10_71}).

\begin{center}
\begin{figure}[ht]
\includegraphics[scale=.15]{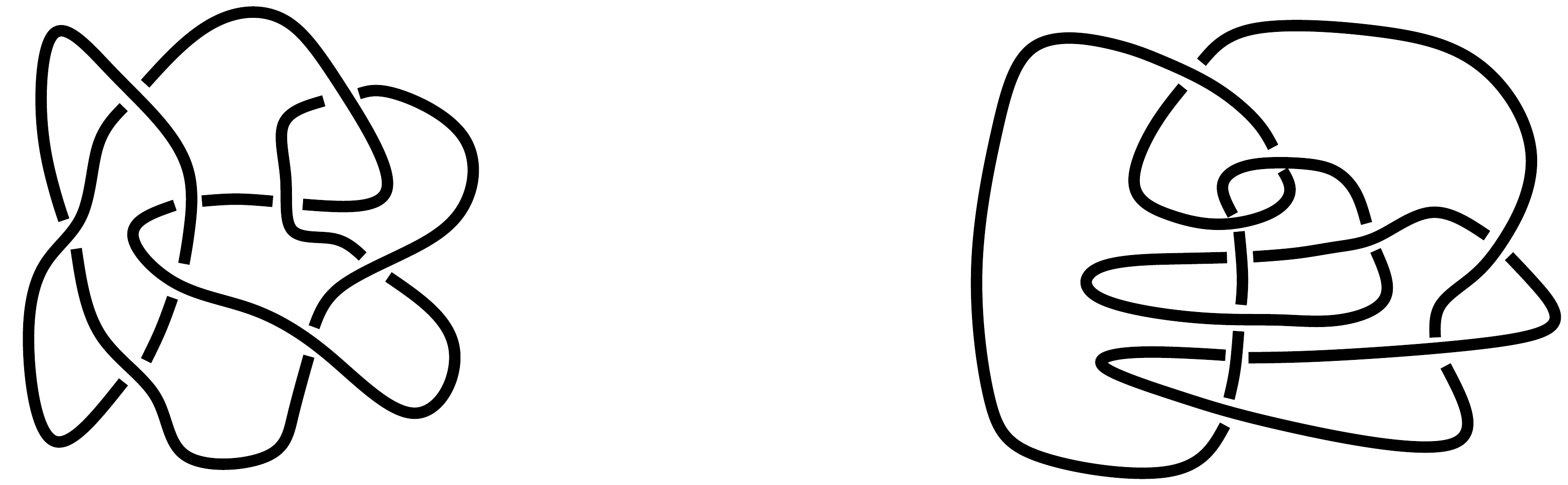}
\caption{$\operatorname{K}9_{42}$ and $\operatorname{K}10_{71}$.} 
\label{fig:K9_42_and_K10_71}
\end{figure}
\end{center}

Their chirality cannot be discerned by knot polynomials, such as   
the Jones polynomial, HOMFLY-PT polynomial and Kauffman 
polynomial. In the next section, we present a simple invariant, 
a reinterpretation of Fox's argument in \cite{Fox:50} in terms
of the group span with pairing, that can detect their chirality.





\section{Invariants of an algebraic scene}\label{sec:representations}
In Section \ref{sec:examples} we use a generalized Motto-Lee-Lee construction
and the satellite construction to produce many inequivalent connected scenes 
with homeomorphic complements. 
The aim of the section is to devise tools to investigate these examples.
The invariants defined here make 
crucial use of homomorphisms from $\pi_1(\outside,\ast)$
to a finite group $G$ and various subgroups of $\pi_1(\outside,\ast)$ 
induced from the fundamental span. 
 
The invariants presented in this section are computable, and the major part of the computation are carried out by the program Appcontour developed by the second 
author \cite{BeBePaPa:15}, \cite{appcontour}. 
The result of our computation is recorded in Section \ref{sec:using}.   

Given a connected scene $\SSS=(\inside,\Sigma,\outside)$ and a base point $\ast\in\Sigma$, 
we consider the set $\operatorname{Hom}(\pi_1(\outside,\ast),G)$ 
of homomorphisms from $\pi_1(\outside,\ast)$ 
to a finite group $G$.
It is clear that this set is independent of the choice of 
a base point---namely, given two base points $\ast$ and $\ast'$ on $\Sigma$,
there is a bijection between  
$\operatorname{Hom}(\pi_1(\outside,\ast),G)$ and $\operatorname{Hom}(\pi_1(\outside,\ast'),G)$.
Furthermore, there is a left action of
$\operatorname{Aut}(G)$ on $\operatorname{Hom}(\pi_1(\outside,\ast),G)$  
given by the composition, where $\operatorname{Aut}(G)$ is the
automorphisms group of $G$. For the sake of simplicity, we denote 
the set of all orbits of 
$\operatorname{Hom}(\pi_1(\outside,\ast),G)$
under the action of $\operatorname{Aut}(G)$ by 
$$
\mathcal{H}(\outside)_G.
$$
In some situations it is more convenient to consider other subgroups 
of $\operatorname{Aut(G)}$, for instance the inner automorphisms
of $G$; the orbit set of $\operatorname{Hom}(\pi_1(\outside,\ast),G)$ 
under the action of the subgroup is also an invariant of $\SSS$.

The cardinality of the orbit set $\mathcal{H}(\outside)_G$ is 
a strong invariant of connected scenes. For example, 
Ishii, Kishimoto, Moriuchi and Suzuki \cite{IshKisMorSuz:12} show 
that most of the handlebody knots up to six crossings
can be distinguished by the number of conjugacy classes 
of $\operatorname{SL}(2,\mathbb{Z}/p\mathbb{Z})$- and 
$\operatorname{SL}(3,\mathbb{Z}/p\mathbb{Z})$-representations
of $\pi_1(\outside,\ast)$.

However, since $\mathcal{H}(\outside)_G$ and its variants depend only on 
the homeomorphism type of $\outside$, they cannot distinguish the examples in Section 
\ref{sec:examples}. A finer invariant taking into account
the interrelation between $\inside$, $\Sigma$, and $\outside$ is required to examine these
examples.




\subsection{The G-image of meridians of a handlebody knot}

In this subsection we present an invariant of handlebody knots, called the $G$-image
of meridians, which is derived from the fundamental span and
is useful in distinguishing handlebody knots obtained 
by the twist construction in Section \ref{sec:examples}.

\begin{definition}[\textbf{Proper homomorphism}]\label{def:proper_homomorphism}
Let $\SSS=(\inside,\Sigma,\outside)$ be
a handlebody knot.
A surjective homomorphism  
$\phi: \pi_{1}(\outside,\ast) \to G$ is proper 
if the composition 
\[
\phi \circ {i_\outside}_*:
\pi_1(\Sigma,*)\rightarrow \pi_1(\outside,*)\rightarrow G\]
is not onto.  
An element $\alpha$ in $\mathcal{H}(\outside)_G$ is called proper
if $\alpha$ is represented by a proper homomorphism. 
\end{definition}

\begin{definition}[\textbf{$G$-image of meridians}]\label{def:G_image}
Let $\SSS=(\inside,\Sigma,\outside)$ be
a handlebody knot.  Then the $G$-image 
of meridians of $\SSS$ is a set of subgroups of $G$, up to 
automorphisms of $G$, given by
$$
G\operatorname{-im}(\SSS):=\left\{G_\alpha \mid  \alpha\in 
\mathcal{H}(\outside)_G 
 \text{ is proper} 
\right\}.
$$ 
where $G_\alpha$ denotes
the image of the kernel of  
${i_\inside}_*$
under the composition  
$\phi \circ {i_\outside}_*$ under any representative $\phi\in\alpha$.
\end{definition}

\nada{In other words, 
\[G\operatorname{-im}(\SSS):=
\Big\{
\operatorname{Im}(\phi\circ i_{\outside\ast}\vert_{\operatorname{Ker}(\pi_1(\Sigma,\ast)\rightarrow \pi_1(\inside,\ast))}) 
\mid ~ \phi : \pi_1(\outside,\ast) \to G \text{ proper }
\Big\}.
\]}

\noindent
\begin{remark}
Note that $G\operatorname{-im}(\SSS)$ is well-defined only up to automorphism 
of $G$. Also, the $G$-image of meridians of a connected scene 
is independent of the choice of a base point.

The definition of $G$-image of meridians 
applies to any connected scene. 
However, since the kernel of ${i_\inside}_*$
is less manageable for a general $\inside$, 
in the present paper  we restrict our attention 
to the case where $\inside$ is a $3$-handlebody.
In this case, the kernel of ${i_\inside}_*$ 
is the normal closure of meridians of 
the handlebody knot.
\end{remark}

\subsection{An invariant for \knottable disks}
Denote a connected scene $\SSS$ equipped with 
a truly \knottable disk $D$ by $(\SSS,D)$. 
Then, given two such pairs
$(\SSS,D)$ and $(\SSS^{\prime},D^{\prime})$, 
one might want to know whether the connected scenes 
$\SSS^{K}=(\inside^{D,K},\Sigma^{D,K},\outside^{D,K})$ and $\SSS^{\prime,K}=(\inside^{D^{\prime},K},\Sigma^{D^{\prime},K},\outside^{D^{\prime},K})$ 
obtained by 
performing the satellite construction along a knot $K$ 
w.r.t.\ $D$ and $D^{\prime}$, respectively,
are equivalent. 
We define a polynomial invariant for such pairs to investigate the problem. 

\begin{definition}[\textbf{$G$-index}]\label{def:G_index}
Given a prime handlebody knot $\SSS=(\inside,\Sigma,\outside)$ with 
a truly \knottable disk $D$ and a finite group $G$.  
The $G$-index of $(\SSS,D)$ is the polynomial 
\[\operatorname{ind}_{G}[\SSS,D](x)=\sum_{i=1}^{+\infty}n_ix^{i},\]
where $n_{i}$ stands for the number of elements in $\mathcal{H}(\outside)_G$ 
that sends $m$, the associated meridian in $\SSS$ with respect to $D$,
to an element of order $i$ in $G$.  
\end{definition}

Note that the base point might not be on the associated meridian 
so, to evaluate the order of the image of the meridian, one needs 
to connect the meridian with the base point by an arc. 
But changing the connecting arc does not change the conjugacy
class of the image of the meridian in $G$.

By Theorem \ref{Two_truely_explosive_disks}, 
any equivalence between $\SSS^{K}$ and $\SSS^{\prime K}$ 
must send $m$, the associated meridian in $\SSS$ with respect to $D$, 
to $m^{\prime}$, the meridian in $\SSS^{\prime}$ with respect to $D^{\prime}$. 
Hence, we have the following corollary:

\begin{corollary}
Given two irreducible handlebody scenes with truly \knottable disks $
(\SSS,D)$ and $(\SSS^{\prime},D^{\prime})$,
if their $G$-indices are not the same, then the resulting connected scenes 
$\SSS^{K}$ and $\SSS^{\prime,K}$ are not equivalent, for any non-trivial knot $K$.
\end{corollary}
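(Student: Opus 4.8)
The plan is to show that $\operatorname{ind}_{G}[\SSS,D](x)$ depends only on the homeomorphism type of the pair $(\outside,m)$, where $m$ is regarded as a conjugacy class in $\pi_1(\outside,\ast)$ via ${i_\outside}_\ast$, and then to feed this into Theorem \ref{Two_truely_explosive_disks}. First I would record the following elementary fact: if $h\colon\outside\xrightarrow{\sim}\outside'$ is a homeomorphism carrying the conjugacy class of $m$ to that of $m'$, then precomposition with $h_\ast$ gives an $\operatorname{Aut}(G)$-equivariant bijection $\operatorname{Hom}(\pi_1(\outside',\ast'),G)\to\operatorname{Hom}(\pi_1(\outside,\ast),G)$, hence a bijection $\mathcal{H}(\outside')_G\to\mathcal{H}(\outside)_G$; since $h_\ast$ sends $m$ to a conjugate of $m'$ and the order of the image of an element under a homomorphism is a conjugacy invariant, this bijection matches, for every $i$, the elements of $\mathcal{H}(\outside')_G$ sending $m'$ to an element of order $i$ with the elements of $\mathcal{H}(\outside)_G$ sending $m$ to an element of order $i$. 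Consequently the coefficients $n_i$ agree and $\operatorname{ind}_{G}[\SSS,D]=\operatorname{ind}_{G}[\SSS',D']$. (The ambiguity coming from the arc joining the base point to the meridian, noted after Definition \ref{def:G_index}, affects only the conjugacy class of the meridian's image, so it is immaterial here.)

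Next I would use the homeomorphism $\iota_{K}\colon\outside\xrightarrow{\sim}\outside^{D,K}$ of \eqref{homeo_F_to_Fdk}, which is the identity away from the blow-up ball $B^{3}$. Because the associated meridian $m$ is disjoint from $\partial D$ and bounds a disk in $\inside$ lying off $D$ (Lemma \ref{Meridians_w_r_t_explo_disk1}), it may be isotoped off $B^{3}$; hence $\iota_{K}$ carries the conjugacy class of $m$ in $\pi_1(\outside,\ast)$ to that of the curve that Theorem \ref{Two_truely_explosive_disks} denotes $m$, namely the meridian of the solid-torus summand $T$ in the prime decomposition $\inside^{D,K}\simeq T\#(\sphere\setminus N(K))$. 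The same argument applied to $(\SSS',D')$ produces $\iota_{K}'\colon\outside'\xrightarrow{\sim}\outside^{D',K}$ carrying $m'$ to $m'$.

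Finally, suppose $\SSS^{K}=\SSS^{D,K}$ and $\SSS^{\prime,K}=\SSS^{D',K}$ were equivalent. An equivalence of connected scenes restricts in particular to a homeomorphism $\outside^{D,K}\xrightarrow{\sim}\outside^{D',K}$, and, since $\SSS$ and $\SSS'$ are prime, Theorem \ref{Two_truely_explosive_disks} shows that it sends $m$ to $m'$. Composing with $\iota_{K}$ and $(\iota_{K}')^{-1}$ from the previous step yields a homeomorphism $\outside\xrightarrow{\sim}\outside'$ taking the conjugacy class of $m$ to that of $m'$; by the first paragraph this forces $\operatorname{ind}_{G}[\SSS,D]=\operatorname{ind}_{G}[\SSS',D']$, contrary to hypothesis. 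Hence $\SSS^{K}$ and $\SSS^{\prime,K}$ are inequivalent for every non-trivial knot $K$. The only genuinely delicate point — the main obstacle — is the bookkeeping in the second step: one must check that the associated meridian of $(\SSS,D)$, which a priori lives on $\Sigma$, is carried by the blow-up to precisely the meridian of the torus summand featuring in Theorem \ref{Two_truely_explosive_disks}, and that this identification is independent of the auxiliary choices (base point, connecting arc, tubular neighbourhoods) made along the way; once this is pinned down, the rest of the argument is purely formal.
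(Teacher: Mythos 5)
Your argument is correct and follows the same route the paper intends: the $G$-index is an invariant of the pair $(\outside,m)$ up to homeomorphism, the blow-up homeomorphism $\iota_K$ of \eqref{homeo_F_to_Fdk} identifies $(\outside,m)$ with $(\outside^{D,K},m)$ since $m$ misses the blow-up ball, and Theorem \ref{Two_truely_explosive_disks} forces any equivalence of the blown-up scenes to match $m$ with $m'$, so unequal $G$-indices obstruct equivalence. The paper leaves all of this implicit in a one-line appeal to Theorem \ref{Two_truely_explosive_disks}; your write-up merely makes the bookkeeping explicit.
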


\subsection{An invariant for knot chirality}
Any equivalence between two knots $\SSS=(\inside,\Sigma,\outside)$, 
$\SSS^{\prime}=(\inside^{\prime},\Sigma^{\prime},\outside^{\prime})$
induces an orientation-preserving diffeomorphism from 
$\outside$ to $\outside^{\prime}$, which sends the meridian $[m]$ 
(resp. the preferred longitude $[l]$) in $\pi_1(\outside,\ast)$ to the 
meridian $[m^{\prime}]$ (resp. the preferred longitude 
$[l^{\prime}]$) in $\pi_1(\outside^{\prime},\ast)$.\footnote{We may
choose the base point to be the intersection of the 
meridian and the longitude and any equivalence of connected scenes
can be deformed to one preserving base points.}
Furthermore, 
since it is orientation-preserving, it sends a positively oriented
pair $([m],[l])$ to a positively oriented pair $([m^{\prime}],[l^{\prime}])$.
A meridian--longitude pair $([m],[l])$ is positively oriented if its intersection
number $d([m],[l])$ is $+1$.

Fix a positively oriented pair $([m],[l])$ in $\SSS$ and partition
the set $\mathcal{H}(\outside)_G$ according to the order of the image
of $[m]$ in $G$:
\[\mathcal{H}(\outside)_G=\mathcal{H}(\outside)_1\cup...\cup\mathcal{H}(\outside)_n,\]
where $\mathcal{H}(\outside)_i$ contains those homomorphisms sending 
$[m]$ to an element of order $i$ in $G$.

Now, consider the product $[m][l]$ in $\pi_1(\outside,\ast)$ and observe
that the isomorphism induced by an equivalence between 
$\SSS$ and $\SSS^{\prime}$ sends $[m][l]$ to $[m^{\prime}][l^{\prime}]$.
Thus, we can further partition each $\mathcal{H}(\outside)_i$ according to the order
of the image of $[m][l]$ in $G$:
\[\mathcal{H}(\outside)_i=\mathcal{H}(\outside)_{i,1}\cup...\cup\mathcal{H}(\outside)_{i,n_i},\]
where $\mathcal{H}(\outside)_{i,j}$ contains those homomorphisms in $\mathcal{H}(\outside)_i$ 
that sends $[m][l]$
to an element of order $j$ in $G$; any equivalence 
between $\SSS$ and $\SSS^{\prime}$ induces a bijection
between the sets $\mathcal{H}(\outside)_{i,j}$ and $\mathcal{H}(\outside^{\prime})_{i,j}$. 

In particular, we have the following:

\begin{lemma}
The set $\mathcal{H}(\outside)_{i,j}$ is 
an invariant of the knot $\SSS=(\inside,\Sigma,\outside)$. 
\end{lemma}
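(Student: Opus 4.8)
The plan is to establish two facts: that any ambient isotopy equivalence between knots $\SSS=(\inside,\Sigma,\outside)$ and $\SSS'=(\inside',\Sigma',\outside')$ induces, for all $i,j$, a bijection between $\mathcal{H}(\outside)_{i,j}$ and $\mathcal{H}(\outside')_{i,j}$; and that the partition $\{\mathcal{H}(\outside)_{i,j}\}$ is independent of the chosen positively oriented meridian--longitude pair (and of the auxiliary base point and connecting arc), so that it is a well-defined function of the knot alone.

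For the first fact, I would proceed as follows. An equivalence is realized by an orientation-preserving self-diffeomorphism $\Phi_1$ of $\sphere$ with $\Phi_1(\inside)=\inside'$, which restricts to an orientation-preserving diffeomorphism $\outside\to\outside'$; after an isotopy supported near $\partial N(K)$ one may assume it carries the chosen meridian $m$ and preferred longitude $l$ to $m'$ and $l'$, and, being orientation preserving, it carries the positively oriented pair $([m],[l])$ to a positively oriented pair, hence to $([m'],[l'])$. Taking the base point at $m\cap l$ (resp. $m'\cap l'$), this yields an isomorphism $\psi:\pi_1(\outside,\ast)\xrightarrow{\sim}\pi_1(\outside',\ast')$ with $\psi([m])=[m']$, $\psi([l])=[l']$, hence $\psi([m][l])=[m'][l']$. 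Precomposition with $\psi$ is an $\operatorname{Aut}(G)$-equivariant bijection $\operatorname{Hom}(\pi_1(\outside',\ast'),G)\to\operatorname{Hom}(\pi_1(\outside,\ast),G)$, so it descends to a bijection $\mathcal{H}(\outside')_G\to\mathcal{H}(\outside)_G$. Since $\phi(g)$ and $\sigma(\phi(g))$ have the same order for every $\sigma\in\operatorname{Aut}(G)$, the order of the image of $[m]$ and the order of the image of $[m][l]$ are well defined on $\operatorname{Aut}(G)$-orbits; because $\psi$ matches $[m]$ with $[m']$ and $[m][l]$ with $[m'][l']$, the bijection restricts to bijections $\mathcal{H}(\outside')_{i,j}\to\mathcal{H}(\outside)_{i,j}$.

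For the second fact, I would recall that on the boundary torus $\partial N(K)$ the meridian is the unique isotopy class of essential simple closed curve bounding a disk in $\inside$ up to reversal of orientation, and the preferred longitude is the unique isotopy class with zero linking number with $K$ up to reversal; moreover, based at $m\cap l$, the elements $[m]$ and $[l]$ commute in $\pi_1(\outside,\ast)$ since they lie in the image of the abelian group $\pi_1(\partial N(K))$. Among the four pairs obtained by choosing orientations, non-degeneracy of the intersection form $d$ shows that exactly the two related by reversing both orientations have $d=+1$; passing to the other positively oriented pair replaces $[m]$ by $[m]^{-1}$ and $[l]$ by $[l]^{-1}$, hence $[m][l]$ by $[m]^{-1}[l]^{-1}=([l][m])^{-1}=([m][l])^{-1}$. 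As $\phi([m])$ and $\phi([m]^{-1})$, and $\phi([m][l])$ and $\phi(([m][l])^{-1})$, have equal orders, the partition is unchanged; the same conjugation-invariance of orders handles the choices of base point and connecting arc.

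The only delicate point --- the \emph{main obstacle}, such as it is --- is the orientation bookkeeping: verifying that an orientation-preserving self-diffeomorphism genuinely matches the two positively oriented pairs, and that switching to the second positively oriented pair inverts the product $[m][l]$ rather than doing something less tractable. This is exactly where the commutativity of $[m]$ and $[l]$ in $\pi_1(\outside,\ast)$ and the non-degeneracy of $d$ enter; the remainder of the argument is formal.
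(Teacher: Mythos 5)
Your proof is correct and follows essentially the same route as the paper, whose ``proof'' is simply the discussion preceding the lemma: an equivalence induces an orientation-preserving diffeomorphism of complements carrying the positively oriented meridian--longitude pair to a positively oriented pair for the other knot, hence $[m]$ to $[m']$ and $[m][l]$ to $[m'][l']$, and precomposition with the induced isomorphism gives the required $\operatorname{Aut}(G)$-equivariant bijections. The only addition is your explicit check that the partition is independent of which of the two positively oriented pairs is chosen (via commutativity of $[m]$ and $[l]$ on the boundary torus, so that the product is merely inverted and orders are unchanged), a well-definedness point the paper leaves implicit.
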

   
\begin{corollary}\label{Detecting_chirality}
If a knot $\SSS$ and its mirror image $m\SSS$ 
are ambient isotopic---namely an amphichiral knot, then there is a $1$-$1$ correspondence 
between the sets $\mathcal{H}(\outside)_{i,j}$ and $\mathcal{H}(m\outside)_{i,j}$,
for every $i,j$. 
\end{corollary}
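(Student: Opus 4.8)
The plan is to deduce the corollary directly from the preceding lemma, once we record that the mirror image of a knot is again a knot and keep track of orientations. First I would observe that $m\SSS$ is a connected scene whose inside $m\inside$ is the image of the solid torus $\inside$ under a self-diffeomorphism of $\sphere$, hence is itself a solid torus; thus $m\SSS$ is a knot, and the sets $\mathcal{H}(m\outside)_{i,j}$ are defined, computed with respect to any positively oriented meridian--longitude pair in $m\Sigma$. As in the setup preceding the lemma, this partition does not depend on the chosen positively oriented pair: two such pairs differ by simultaneously inverting $[m]$ and $[l]$, and inversion changes neither the order of the image of $[m]$ nor that of $[m][l]$ in $G$, since $[m^{-1}][l^{-1}]=([l][m])^{-1}$ is conjugate to $([m][l])^{-1}$, and base-point changes only conjugate these elements.

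Next I would unwind amphichirality: by hypothesis there is an equivalence of connected scenes $\Psi\colon\SSS\xrightarrow{\sim}m\SSS$. Being realized by an ambient isotopy, $\Psi$ is orientation-preserving on $\sphere$ and sends $\inside$ to $m\inside$, hence restricts to a diffeomorphism $\psi\colon\outside\to m\outside$ whose restriction $\Sigma\to m\Sigma$ is orientation-preserving. Consequently $\psi_\ast$ carries the meridian of $\SSS$ to the meridian of $m\SSS$, the preferred longitude of $\SSS$ to the preferred longitude of $m\SSS$ (zero linking number is preserved by any self-homeomorphism of $\sphere$), and the fixed positively oriented pair $([m],[l])$ to a positively oriented pair $([m'],[l'])$ of $m\SSS$; by the footnote before the lemma we may arrange base points so that $\psi_\ast[m]=[m']$ and $\psi_\ast([m][l])=[m'][l']$ on the nose, and we use $([m'],[l'])$ to compute $\mathcal{H}(m\outside)_{i,j}$.

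Finally, $\psi_\ast$ induces by precomposition an $\operatorname{Aut}(G)$-equivariant bijection $\operatorname{Hom}(\pi_1(m\outside,\ast),G)\to\operatorname{Hom}(\pi_1(\outside,\ast),G)$, $\phi\mapsto\phi\circ\psi_\ast$, which descends to a bijection $\mathcal{H}(m\outside)_G\to\mathcal{H}(\outside)_G$. Since $(\phi\circ\psi_\ast)([m])=\phi([m'])$ and $(\phi\circ\psi_\ast)([m][l])=\phi([m'][l'])$, this bijection preserves the order of the image of the meridian and the order of the image of the meridian--longitude product, hence restricts to bijections $\mathcal{H}(m\outside)_{i,j}\xrightarrow{\ \sim\ }\mathcal{H}(\outside)_{i,j}$ for every $i,j$ — which is exactly the preceding lemma applied to the equivalent knots $\SSS$ and $m\SSS$.

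I do not expect a serious obstacle here; the one point that must not be glossed over — and the reason the corollary is not vacuous — is that the identification used must be orientation-preserving. The complement $m\outside$ is always homeomorphic to $\outside$ through the orientation-reversing self-diffeomorphism of $\sphere$ defining the mirror, but that homeomorphism turns a positively oriented meridian--longitude pair into a negatively oriented one, so a priori it matches $\mathcal{H}(\outside)_{i,j}$ with a ``reflected'' partition of $\mathcal{H}(m\outside)_G$ rather than with $\mathcal{H}(m\outside)_{i,j}$ itself. It is precisely amphichirality that supplies the orientation-preserving identification $\psi$ needed to conclude, so the contrapositive — failure of the correspondence forces chirality — is the usable consequence.
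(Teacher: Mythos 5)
Your proposal is correct and follows the paper's intended route: the corollary is stated there as an immediate consequence of the preceding lemma (that $\mathcal{H}(\outside)_{i,j}$ is an invariant of the knot), and you simply apply that lemma to the equivalent knots $\SSS$ and $m\SSS$, spelling out the details the paper leaves implicit. Your closing remark on why the corollary is non-vacuous---that the tautological orientation-reversing identification of $\outside$ with $m\outside$ sends a positively oriented meridian--longitude pair to a negatively oriented one---is exactly the point the paper relies on when using this criterion to detect chirality.
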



\subsection{Using the invariants in practice}\label{sec:using}
Here we present the result of our computation of the invariants introduced in Section \ref{sec:representations}. Required appcontour commands and how they are used to
obtain the result are recorded in \ref{sec:appcontour}.

\subsubsection{The G-image of meridians}\label{subsec:the_g_image_of_meridians}
Let $G=A_5$, the alternating group of degree $5$. Table \ref{The_A_5_image_HK5_1} 
describes the $A_5$-image of meridians of twisted $\HKfiveone$'s.
\begin{table}[ht]
\caption{The $A_5$-image of meridians of twisted $\HKfiveone$'s.} 
\begin{center}
\begin{tabular}{l|l l l l}
Handlebody knots & The $A_5$-image of meridians\\ \hline
$\HKfiveone$ & \{$A_{4}$, $A_{4}$, $A_{4}$, $D_{10}$, $D_{10}$\}\\ \hline
$-A_1$-$\HKfiveone$ & \{$V_{4}$, $A_{4}$ $A_{4}$, $\mathbb{Z}/5\mathbb{Z}$, $D_{10}$\}\\ \hline
$+A_1$-$\HKfiveone$ & \{$A_{4}$, $A_{4}$, $A_{4}$, $\mathbb{Z}/5\mathbb{Z}$, $D_{10}$\}\\ \hline 
$-A_2$-$\HKfiveone$ & \{$A_{4}$, $A_{4}$, $A_{4}$, $D_{10}$, $D_{10}$\}\\ \hline
$+A_2$-$\HKfiveone$ & \{$V_{4}$, $A_{4}$, $A_{4}$, $D_{10}$, $D_{10}$\}\\ \hline
\end{tabular}
\label{The_A_5_image_HK5_1}
\end{center}
\end{table}
It appears that the $A_5$-image of meridians cannot distinguish $-A_{2}$-$\HKfiveone$ 
and $\HKfiveone$, but in fact, these two handlebody knots are ambient isotopic as shown in Fig: \ref{Equi_btw_minus_A2_HK5_2_and_HK5_2}.
Contrary to the families of handlebody knots in \cite{Mot:90} and \cite{LeeLee:12}, 
the family of handlebody knots constructed by twisting $\HKfiveone$ along $A_2$
contains both equivalent and inequivalent handlebody knots.

\begin{center}
\begin{figure}[ht]
\includegraphics[scale=.15]{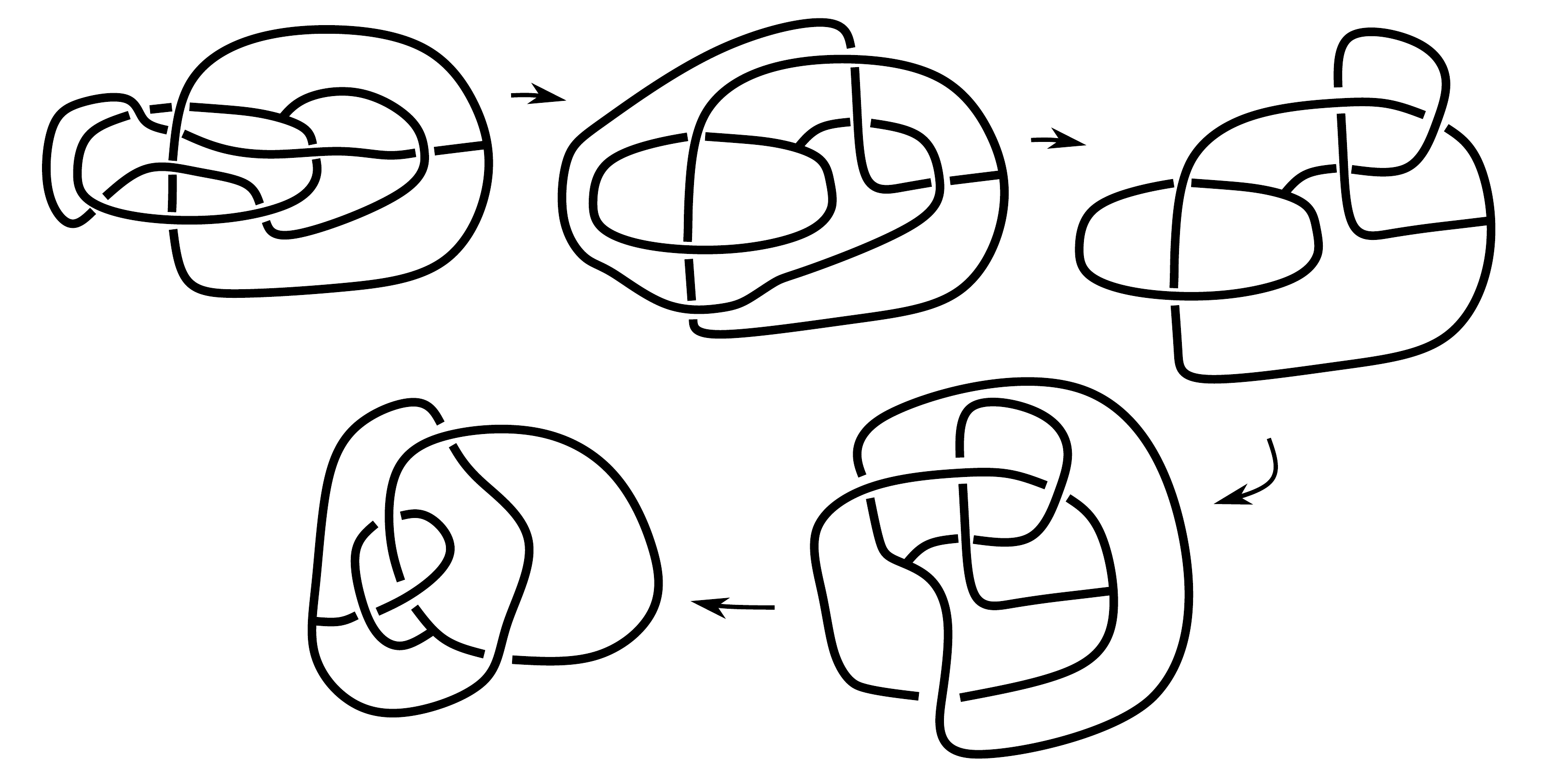}
\caption{Equivalence between $-A_{2}$-$\HKfiveone$ and $\HKfiveone$.}
\label{Equi_btw_minus_A2_HK5_2_and_HK5_2}
\end{figure}
\end{center}

Table \ref{The_A_5_image_HK6_2} presents the $A_5$-image of meridians
of twisted $\HKsixtwo$'s.  
\begin{table}[ht]
\caption{The $A_5$-image of meridians of twisted $\HKsixtwo$'s.} 
\begin{center}
\begin{tabular}{l|l l l l} 
Handlebody knots & The $A_5$-image of meridians\\ \hline
$\HKsixtwo$ & \{$D_{10}$, $D_{10}$, $D_{10}$, $V_{4}$\}\\ \hline
$+A_1$-$\HKsixtwo$ & \{$A_{4}$, $D_{10}$, $D_{10}$, $D_{10}$\}\\ \hline
$-A_1$-$\HKsixtwo$ & \{$A_{4}$, $D_{10}$, $D_{10}$, $D_{10}$\}\\ \hline
$+A_2$-$\HKsixtwo$ & \{$A_{4}$, $D_{10}$, $D_{10}$, $\mathbb{Z}/5\mathbb{Z}$\}\\ \hline
$-A_2$-$\HKsixtwo$ & \{$A_{4}$, $D_{10}$, $D_{10}$, $\mathbb{Z}/5\mathbb{Z}$\}\\ \hline
\end{tabular}
\label{The_A_5_image_HK6_2}
\end{center}
\end{table} 
Among these handlebody knots $+A_1$-$\HKfiveone$ and $-A_1$-$\HKsixtwo$ 
have crossing number $7$ because no other handlebody knot in Ishii et al.'s handlebody knot table, 
except for $-A_1$-$\HKfiveone$, which is $\HKsixfour$, has its complement 
homeomorphic to the complement of $\HKfiveone$; similarly, except for $\HKsixtwo$,
no handlebody knots in the handlebody knot table has its complement homeomorphic to the complement of $-A_1$-$\HKsixtwo$. Therefore, we have proved Proposition \ref{Inequivalent_HK_with_homeo_complement}.

\subsubsection{The G-index of essential transverse disks}

\begin{center}
\begin{tabular}{l|l|l}
Handlebody knot,  & The $S_4$-index & The $A_5$-index\\ 
transverse disk && \\ \hline
($\HKfiveone$,$D^\prime$)   & $5x+20x^2+14x^3+12x^4$ & $4x+14x^2+21x^3+22x^5$\\ \hline 
($\HKfiveone$,$D$) & $5x+22x^2+10x^3+14x^4$ & $4x+15x^2+16x^3+26x^5$\\ \hline
($\HKsixone$,$D^\prime$)   & $5x+20x^2+14x^3+12x^4$ & $4x+24x^2+22x^3+27x^5$\\ \hline
($\HKsixone$,$D$)  & $5x+18x^2+18x^3+10x^4$ & $4x+15x^2+28x^3+30x^5$ \\ \hline
($\HKsixeleven$,$D^\prime$)   & $5x+18x^2+10x^3+10x^4$ & $4x+11x^2+11x^3+16x^5$ \\ \hline 
($\HKsixeleven$,$D$)  & $5x+18x^2+10x^3+10x^4$ & $4x+19x^2+13x^3+6x^5$\\ \hline 
\end{tabular}
\end{center}

\begin{center}
\begin{figure}[ht]
\includegraphics[scale=.08]{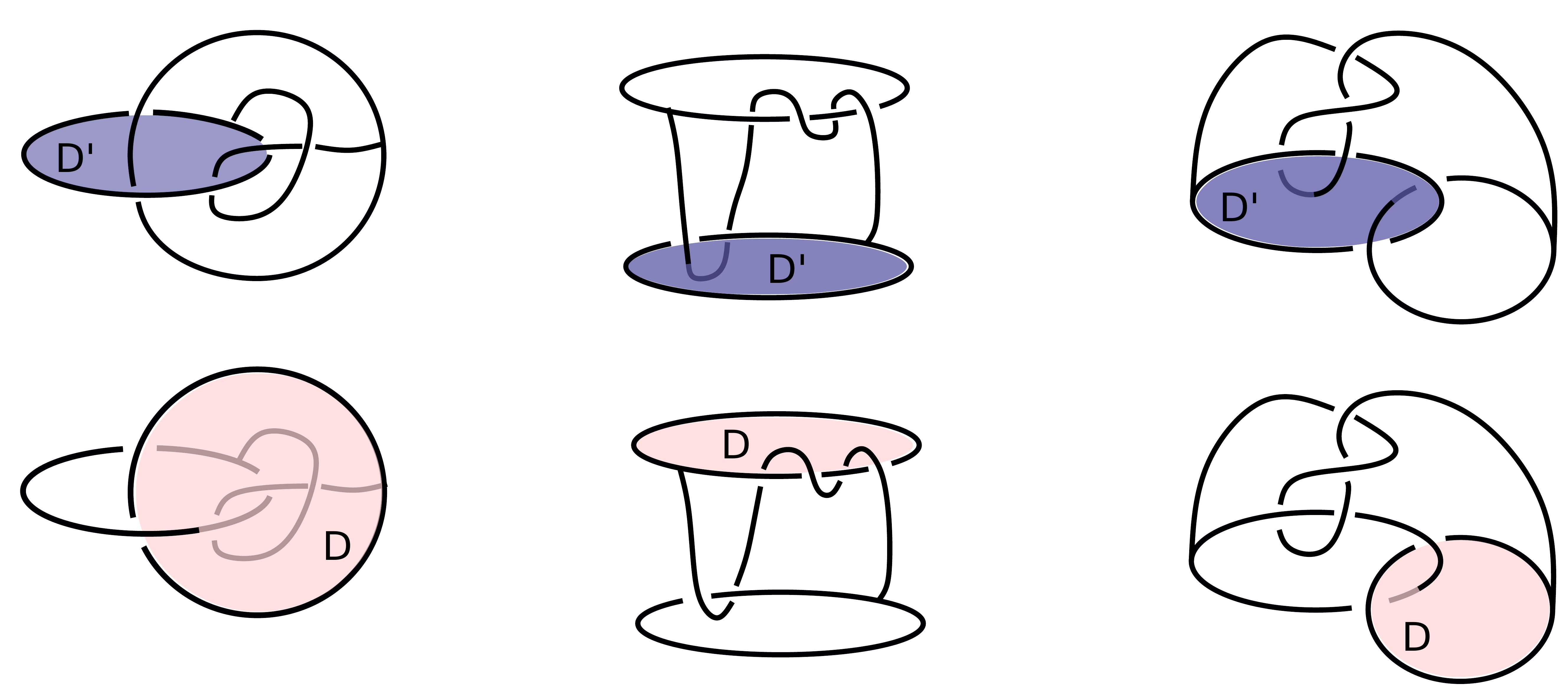}
\caption{$D$ and $D^{\prime}$ in $\HKfiveone$, $\HKsixone$, and $\HKsixeleven$.}
\end{figure}
\end{center}

From the table above we can see that for the handlebody knot diagrams $\HKfiveone$ 
and $\HKsixone$, the $S_4$-index can already distinguish the bi-knotted scenes 
obtained by performing the satellite construction w.r.t.\ $D$ and $D^{\prime}$, 
but for $\HKsixeleven$, we need the $A_5$-index to see that performing the satellite construction w.r.t.\
$D$ and $D^{\prime}$ induce different bi-knotted scenes. 
Note that the $A_4$-index can distinguish none of them.

\subsubsection{Knot chirality}
To illustrate how one can use Corollary \ref{Detecting_chirality} to detect
knot chirality, we consider the right-hand trefoil $K$ and its mirror image $mK$:

\begin{center}
\includegraphics[scale=.18]{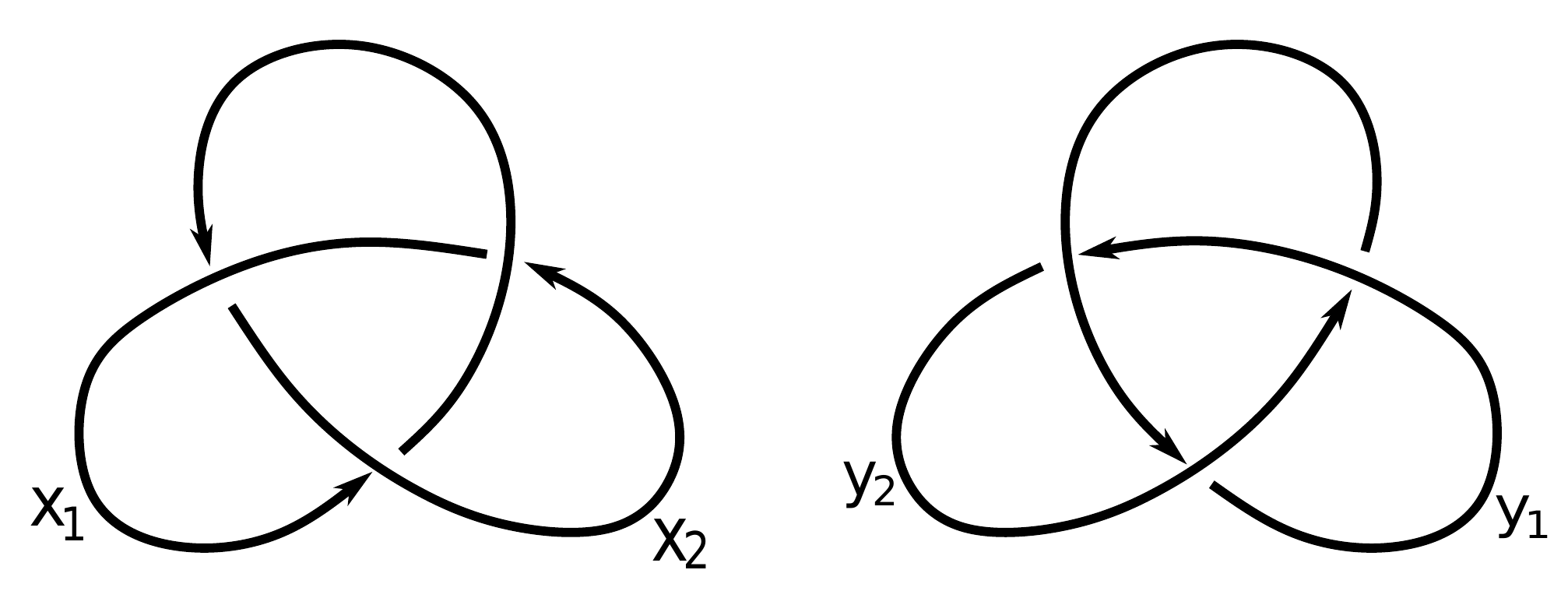}
\end{center}
 
They have isomorphic fundamental groups:
\begin{align*}
\pi_1(K)& =<x_1,x_2\hspace*{1em}\mid \hspace*{1em} x_2x_1x_2=x_1x_2x_1>;\\
\pi_1(mK)& =<y_1,y_2\hspace*{1em}\mid \hspace*{1em} y_2y_1y_2=y_1y_2y_1>.
\end{align*} 

Since different base points are connected by an ambient isotopy, without loss of
generality, we may assume the base points are on the arc labeled with $x_1$ and $y_1$, respectively.
The corresponding meridian-longitude pairs are $(x_1, x_2x_1x_2^{-1}x_1x_2x_1^{-3})$ and 
$(y_1,y_2^{-1}y_1^{-1}y_2y_1^{-1}y_2^{-1}y_1^{3})$.

Now, up to automorphisms of $A_5$, there is only one homomorphism from 
$\pi_1(\outside,\ast)$ to $A_5$ given by
\begin{align*}
x_1 \hspace*{1em}(\text{resp. }y_1)\mapsto (12345);\\
x_2 \hspace*{1em}(\text{resp. }y_2)\mapsto (13542),
\end{align*} 
where $\outside$ is the complement of an open tubular neighborhood of $K$ 
and $\ast\in \partial F$. 
 
In particular, this implies that $\mathcal{H}(\outside)_{5}$ contains only one element.
Computing the image of the product of 
the meridian and longitude in $A_5$ for each of them, we further get
\begin{align*}
x_1x_2x_1x_2^{-1}x_1x_2x_1^{-3} \longmapsto (1);\\
y_1y_2^{-1}y_1^{-1}y_2y_1^{-1}y_2^{-1}y_1^{3} \longmapsto (13524),
\end{align*} 
respectively.
Hence the right-hand trefoil has non-trivial $\mathcal{H}(\outside)_{5,1}$,
whereas the left-hand trefoil has non-trivial $\mathcal{H}(\outside)_{5,5}$,
so they are not equivalent.

\begin{remark}
Fox's proof of inequivalence of the granny knot $K_{g}=(\inside_g,\Sigma_g,\outside_g)$ and 
the square knot $K_{s}=(\inside_s,\Sigma_s,\outside_s)$ \cite[p.39]{Fox:52}
can also be translated in terms of the invariant $\mathcal{H}(\outside)_{i,j}$.
Their inequivalence follows from 
the fact that $\mathcal{H}(\outside_{s})_{5,1}$ contains only 
one element but $\mathcal{H}(\outside_{g})_{5,1}$ contains two.
\end{remark}

In a similar manner, we may compute the orbit set $\mathcal{H}(\outside)_{A_{5}}$ 
for $\Kninefortitwo=(\inside,\Sigma,\outside)$; by the result of computations in Appcontour, 
$\mathcal{H}(\outside)_{A_{5}}$
consists of seven elements, and three of them are in $\mathcal{H}(\outside)_{3}$: 
they send $([m],[l])$, the meridian--longitude pair in $\Kninefortitwo$, 
to $((3,4,5),())$, $((2,3,5),())$, $((1,4,5),(1,4,5))$, respectively. 
So, $\mathcal{H}(\outside)_{3,3}$ contains three elements. The third 
representation corresponds to the representation sending the meridian-longitude pair in 
$m\Kninefortitwo$ to $((1,4,5),(1,5,4))$, and hence 
$\mathcal{H}(m\outside)_{3,3}$ contains only two elements.

In the case of $\Ktenseventyone=(\inside,\Sigma,\outside)$, $G=A_5$ or $S_5$ 
is not large enough to see its chiraliy, and we need to consider $\mathcal{H}(\outside)_{A_{6}}$. 
Computations in Appcontour show there are
three elements in $\mathcal{H}(\outside)_{5}$ and three in $\mathcal{H}(\outside)_{4}$.
These representations induce the following assignments of $([m],[l])$, the meridian--longitude pair of $\Ktenseventyone$: 
\begin{align*}
([m],[l])&\longmapsto ((2 3 4 5 6),())\\
         &\longmapsto ((1 5 2 4 6),(1 5 2 4 6))\\ 
         &\longmapsto ((1 6 4 2 5),(1 6 4 2 5))\\
([m],[l])&\longmapsto  ((1 2)(3 4 5 6),())\\ 
         &\longmapsto  ((1 6)(2 3 4 5),(1 6)(2 3 4 5))\\
         &\longmapsto  ((1 3 6 2)(4 5),(1 3 6 2)(4 5))       
\end{align*}
This implies that $\mathcal{H}(\outside)_{5,5}$ contains three elements and $\mathcal{H}(\outside)_{4,2}$ 
two elements, whereas, in $m\Ktenseventyone$, there is only one element 
in $\mathcal{H}(m\outside)_{5,5}$ and none in $\mathcal{H}(m\outside)_{4,2}$.




\subsection{Using \texttt{appcontour}}\label{sec:appcontour}
The computer software \texttt{appcontour} \cite{appcontour} is 
a tool originally developed to deal with
``apparent contours'', i.e. drawings that describe smooth solid objects by projecting fold
lines onto a plane.

It was recently extended by adding the capability of computing homomorphisms of groups described by group presentation to a finite group as mentioned in the beginning of Section \ref{sec:representations}.

As an example, we can count the number of representations of handlebody knot $\HKfiveone$ in
the alternating group $A_5$ with the command
\begin{Verbatim}[fontsize=\small,frame=single,xleftmargin=10mm,commandchars=\\\{\}]
$ \graybox{contour --out ks_A5 HK5_1}
Result: 61
\end{Verbatim}
with the counting done in $A_5$ up to conjugacy by a permutation in $S_5$.

Unfortunately, the computation of $\piout$ performed by appcontour gives a presentation with no information
about the correspondance of the generators with actual loops in $\outside$.
For example, for $\HKfiveone$ we get the following presentation of $\piout$
\begin{Verbatim}[fontsize=\small,frame=single,xleftmargin=10mm,commandchars=\\\{\}]
$ \graybox{contour --out fg HK5_1}
Finitely presented group with 3 generators
<a,b,c; abAcaBAbCbcB>
\end{Verbatim}
with no information about the loops corresponding to the three generators.
Here capital letters are used as a quick way to refer to the inverse of the generators.
For this reason we need to carefully construct by hand an analogue of a Wirtinger presentation
of our scene. For instance, for $\HKfiveone$,
we could use the one shown in Fig. \ref{fig:HK5_2_HK6_2_wirtinger} (left).
\begin{figure}[ht]
    \centering
    \subfloat{ {\includegraphics[scale=.12]{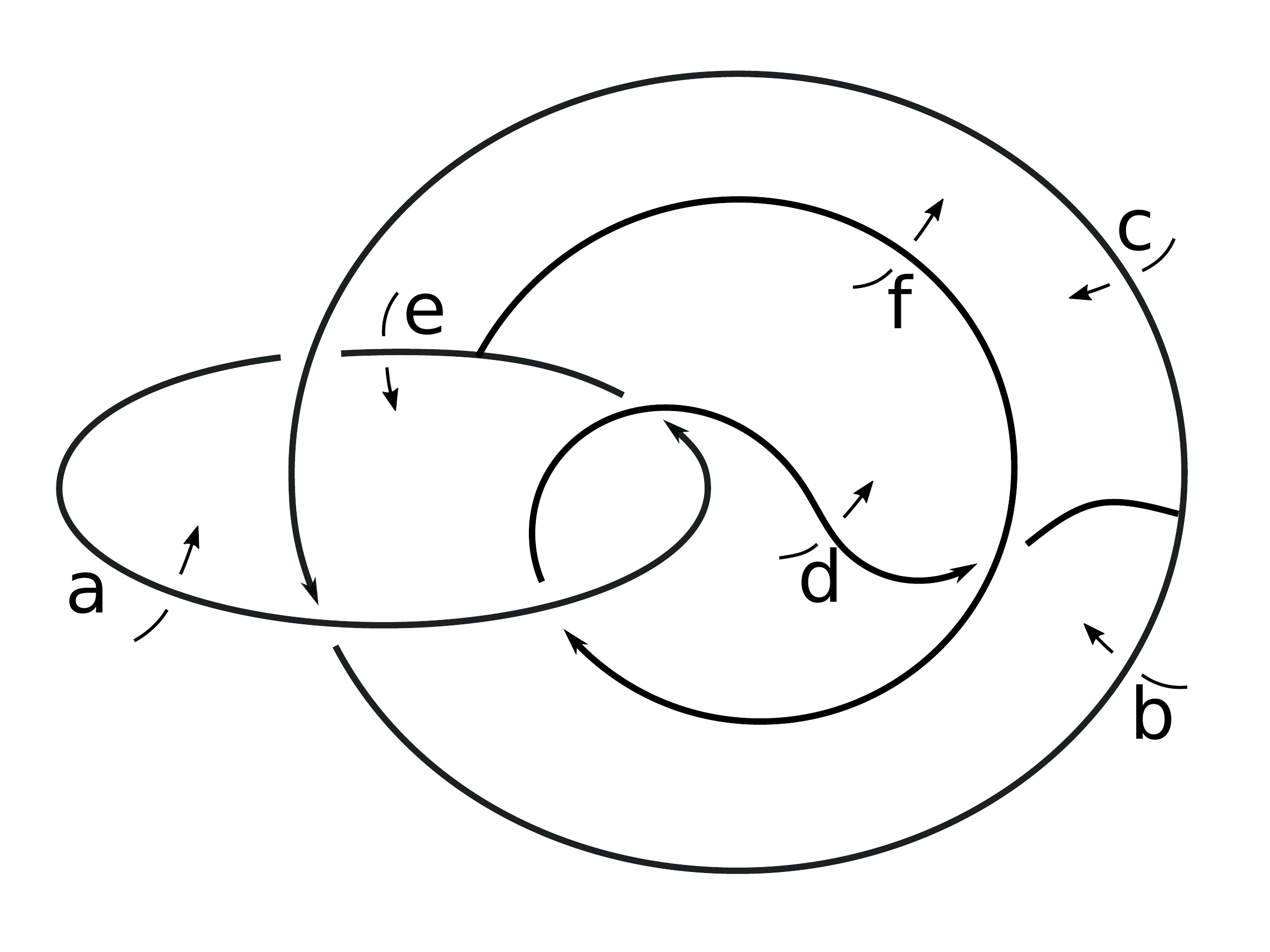} } }%
    \qquad
    \subfloat{ {\includegraphics[scale=.12]{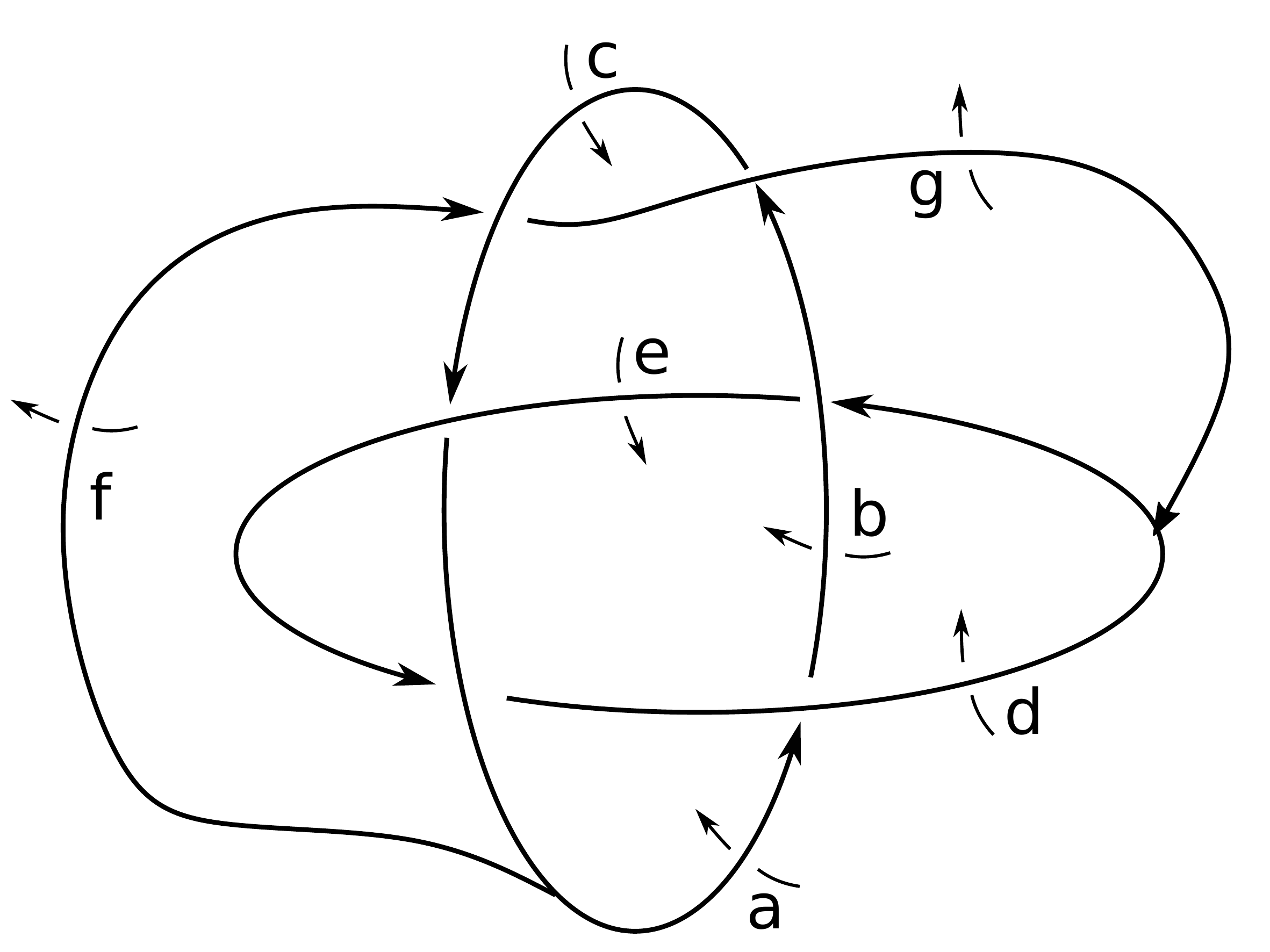}} }%
    \caption{Wirtinger presentations of $\HKfiveone$ and $\HKsixtwo$.}
    \label{fig:HK5_2_HK6_2_wirtinger}
\end{figure}

The syntax that can
be fed to the software is as follows:
\begin{Verbatim}[fontsize=\small,frame=single,xleftmargin=10mm,commandchars=\\\{\}]
fpgroup {<a,b,c,d,e,f; baCA,faDA,feDAd,acEC,BcFDf; b,FADadaf,A,FACdaf>}
\end{Verbatim}
where we have the possibility to add a list of selected elements---elements after the second
semicolon---that allows us to keep track of specific elements in $\piout$.
Here selected elements \#1 (\texttt{b}) and \#2 (\texttt{FADadaf})%
\footnote{%
The base point is chosen near the letter \texttt{b} in the diagram, so that
the second meridian (generator \texttt{a}) requires a connecting path from the
base point (\texttt{FAD}) and then back to the base point (\texttt{daf}) along the curve
connecting the circles of the handcuff.}
($m_1$ and $m_2$ below)
correspond to the meridians
of $\HKfiveone$ induced by the two circles in the handcuff graph diagram (Fig. \ref{fig:HK5_2_HK6_2_wirtinger}, left); selected elements
\#3 and \#4, denoted by $l_1$ and $l_2$ in the following, 
are induced by the two circles, 
which are the other two generators of the fundamental group of
$\partial \inside$ that pair
with \#1 and \#2, respectively.

We create a file named, say, \texttt{HK5\_1.wirtinger} containing the description above
to be used as input to \texttt{appcontour}
and ask for the description of all elements in $\mathcal{H}(\outside)_{A_5}$ with

\begin{Verbatim}[fontsize=\small,frame=single,xleftmargin=10mm,commandchars=\\\{\}]  
$\graybox{contour ks_A5 HK5_1.wirtinger -v} 
\end{Verbatim}
which results in a long list of all $61$ homomorphisms described by indicating the image
of the three generators followed by the corresponding image of the four selected elements.

To get the table \ref{The_A_5_image_HK5_1}, we locate the proper homomorphisms in
$\mathcal{H}(\outside)_{A_5}$:
\begin{Verbatim}[fontsize=\tiny,frame=single,xleftmargin=10mm,commandchars=\\\{\}]
[...]
====== Homomorphism #10 defined by the permutations:
(3 4 5)
(2 3 4)
(1 3)(2 4)
Selected element #1 -> (2 5 4)
Selected element #2 -> (2 4 3)
Selected element #3 -> (2 3 5)
Selected element #4 -> (2 4)(3 5)
[...]
====== Homomorphism #12 defined by the permutations:
(3 4 5)
(2 3 5)
(1 2)(3 5)
Selected element #1 -> (2 3 5)
Selected element #2 -> (2 5 3)
Selected element #3 -> (2 4 5)
Selected element #4 -> (2 4 5)
[...]
====== Homomorphism #28 defined by the permutations:
(2 3)(4 5)
(3 4 5)
(1 5)(3 4)
Selected element #1 -> (2 5)(3 4)
Selected element #2 -> (3 5 4)
Selected element #3 -> (2 4 5)
Selected element #4 -> (2 3 5)
[...]
====== Homomorphism #32 defined by the permutations:
(2 3)(4 5)
(1 2)(3 4)
(1 4 5 3 2)
Selected element #1 -> (1 5)(2 4)
Selected element #2 -> (1 5)(2 4)
Selected element #3 -> (1 3)(2 5)
Selected element #4 -> (1 3 5 4 2)
[...]
====== Homomorphism #48 defined by the permutations:
(1 2 3 4 5)
(2 5)(3 4)
(1 2 4 3 5)
Selected element #1 -> (1 5 4 3 2)
Selected element #2 -> (1 3)(4 5)
Selected element #3 -> (1 4)(2 3)
Selected element #4 -> (1 4)(2 3)
[...]
\end{Verbatim} 
As an example, the first entry in table \ref{The_A_5_image_HK5_1} is the normal closure of
the group generated by $m_1$ and $m_2$ of homomorphism \#10 in the group generated by
$m_1$, $m_2$, $l_1$, $l_2$, which in this case coincide and are isomorphic to $A_4$.
 
To compute the $A_5$-image of meridians for each of the twisted $\HKfiveone$s 
in Fig. \ref{twistedHK51_A1} and \ref{twistedHK51_A2}, it suffices to know the image of the meridians
in a complete system of meridians under the proper homomorphisms. 
We may choose the complete system of meridians induced 
from Fig. \ref{twistedHK51_A1} and \ref{twistedHK51_A2} (regarded as handcuff graph diagrams) 
and use the twist construction to identify 
these meridians with elements in the fundamental group 
of the closure of the complement of $\HKfiveone$ as follows: 
\begin{align*}
m_1^{-A_1}&\dashrightarrow m_1  &  m_1^{-A_2} &\dashrightarrow m_1^{-1}l_1^{-1} m_1\\
m_2^{-A_1}&\dashrightarrow m_2l_2  &  m_2^{-A_2}&\dashrightarrow m_2l_2m_2\\
m_1^{+A_1}&\dashrightarrow m_1  &   m_1^{+A_2}&\dashrightarrow l_1 m_1 m_1\\
m_2^{+A_1}&\dashrightarrow m_2l_2^{-1} & m_2^{+A_2}&\dashrightarrow l_2^{-1}.   
\end{align*} 
This enables us to compute the $A_5$-image of meridians for twisted $\HKfiveone$ using
proper homomorphisms from the fundamental group of the complement of $\HKfiveone$ to
$A_5$.

Similarly, running the command
\begin{Verbatim}[fontsize=\small,frame=single,xleftmargin=10mm,commandchars=\\\{\}]
$ \graybox{contour ks_A5 HK6_2.wirtinger -v}
\end{Verbatim}
where file \texttt{HK6\_2.wirtinger} contains the Wirtinger presentation of $\HKsixtwo$ in
Fig. \ref{fig:HK5_2_HK6_2_wirtinger} (right), we can get proper homomorphisms of the fundamental group of the complement of $\HKsixtwo$
to $A_5$. Table \ref{The_A_5_image_HK6_2} can then be completed by
identifying the meridians in the 
complete systems of meridians of twisted $\HKsixtwo$'s in
Fig. \ref{twistedHK62_A1} and \ref{twistedHK62_A2} (viewed as handcuff graph diagrams)
with combinations of $m_1$, $m_2$, $l_1$, and $l_2$, meridians and two circles in Fig. \ref{fig:HK5_2_HK6_2_wirtinger} (right),  
of $\HKsixtwo$ using the twist construction:
\begin{align*}
 m_1^{-A_1}&\dashrightarrow m_1      &   m_1^{-A_2} &\dashrightarrow m_1m_1m_1l_1\\
 m_2^{-A_1}&\dashrightarrow m_2l_2   &   m_2^{-A_2} &\dashrightarrow m_2^{-1}l_2^{-1}\\
 m_1^{+A_1}&\dashrightarrow m_1      & m_1^{-A_2} &\dashrightarrow m_1l_1^{-1}m_1^{-1}m_1^{-1}\\
 m_2^{+A_1}&\dashrightarrow m_2l_2^{-1} &   m_2^{+A_2} &\dashrightarrow m_2l_2m_2m_2.   
\end{align*} 
In fact, for $A_2$-twisted $\HKsixtwo$ as well as $A_2$-twisted
 $\HKfiveone$, it is easier to represent $m_i^{\pm A_2}$
in terms of $\tilde{l}_1$ and $\tilde{l}_2$, the boundary of $A_2$, 
instead of $l_1$ and $l_2$.

The $S_4$-index and the $A_5$-index for a transverse disk in a prime handlebody knot
can also be computed with the same command

\texttt{contour ks$\_G$ }file-containing-\texttt{$\{$group presentation; selected elements$\}$}  

\noindent
with one of the meridians
$m$ corresponding
to the associated meridian of the transverse disk
in the selected elements. In our case, we could 
arrange the selected meridians to be the associated meridians
of the two transverse disks induced from a handcuff graph diagram.
One could add the multiple of $i$ copies of $m$
as an extra relator to get the coefficient of $x^i$
directly.






\end{document}